\newtheorem{thm}{Theorem}[section]
\newtheorem{lem}[thm]{Lemma}
\newtheorem{prop}[thm]{Proposition}
\newtheorem{cor}[thm]{Corollary}
\theoremstyle{definition}
\newtheorem{dfn}[thm]{Definition}
\newtheorem{ques}[thm]{Question}
\newtheorem{conv}[thm]{Convention}
\newtheorem{conj}[thm]{Conjecture}
\newtheorem{rem}[thm]{Remark}
\newtheorem{ex}[thm]{Example}
\newtheorem{proposal}[thm]{Proposal}
\newtheorem{claim}{Claim}
\newtheorem{setup}[thm]{Setup}
\renewcommand{\qedsymbol}{$\blacksquare$}
\numberwithin{equation}{thm}
\def\A{\mathrm{A}}
\def\a{\mathsf{A}}
\def\add{\operatorname{\mathsf{add}}}
\def\ann{\operatorname{ann}}
\def\C{\mathcal{C}}
\def\CC{\mathbb{C}}
\def\cm{\operatorname{\mathsf{CM}}}
\def\cmp{\operatorname{\mathsf{CM}_{\scalebox{0.6}{\mbox{\boldmath$+$}}}}}
\def\cmz{\operatorname{\mathsf{CM}_0}}
\def\codepth{\operatorname{codepth}}
\def\cok{\operatorname{Cok}}
\def\cx{\operatorname{cx}}
\def\D{\mathrm{D}}
\def\depth{\operatorname{depth}}
\def\ds{\operatorname{\mathsf{D_{sg}}}}
\def\E{\mathrm{E}}
\def\e{\operatorname{e}}
\def\edim{\operatorname{edim}}
\def\End{\operatorname{End}}
\def\Ext{\operatorname{Ext}}
\def\fitt{\operatorname{Fitt}}
\def\grade{\operatorname{grade}}
\def\height{\operatorname{ht}}
\def\Hom{\operatorname{\mathsf{Hom}}}
\def\I{\operatorname{I}}
\def\ind{\operatorname{\mathsf{ind}}}
\def\ker{\operatorname{Ker}}
\def\lcm{\operatorname{\mathsf{\underline{CM}}}}
\def\lend{\operatorname{\underline{End}}}
\def\lmod{\operatorname{\mathsf{\underline{mod}}}}
\def\m{\mathfrak{m}}
\def\Min{\operatorname{Min}}
\def\mod{\operatorname{\mathsf{mod}}}
\def\N{\mathcal{N}}
\def\n{\mathfrak{n}}
\def\nf{\operatorname{NF}}
\def\nzd{\operatorname{NZD}}
\def\p{\mathfrak{p}}
\def\pd{\operatorname{pd}}
\def\Q{\mathrm{Q}}
\def\q{\mathfrak{q}}
\def\R{\mathbb{R}}
\def\r{\mathfrak{r}}
\def\RR{\mathrm{R}}
\def\sing{\operatorname{Sing}}
\def\spec{\operatorname{Spec}}
\def\SS{\mathcal{S}}
\def\supp{\operatorname{Supp}}
\def\syz{\mathrm{\Omega}}
\def\Tor{\operatorname{\mathsf{Tor}}}
\def\v{\operatorname{V}}
\def\X{\mathcal{X}}
\def\XX{\mathsf{X}}
\def\Y{\mathcal{Y}}
\begin{document}
\allowdisplaybreaks
\title[MCM modules that are not locally free on the punctured spectrum]{Maximal Cohen--Macaulay modules that are not locally free on the punctured spectrum}
\author{Toshinori Kobayashi}
\address[T. Kobayashi]{Graduate School of Mathematics, Nagoya University, Furocho, Chikusaku, Nagoya, Aichi 464-8602, Japan}
\email{m16021z@math.nagoya-u.ac.jp}
\author{Justin Lyle}
\address[J. Lyle]{Department of Mathematics, University of Kansas, Lawrence, KS 66045-7523, USA}
\email{justin.lyle@ku.edu}
\urladdr{http://people.ku.edu/~j830l811/}
\author{Ryo Takahashi}
\address[R. Takahashi]{Graduate School of Mathematics, Nagoya University, Furocho, Chikusaku, Nagoya, Aichi 464-8602, Japan/Department of Mathematics, University of Kansas, Lawrence, KS 66045-7523, USA}
\email{takahashi@math.nagoya-u.ac.jp}
\urladdr{https://www.math.nagoya-u.ac.jp/~takahashi/}
\subjclass[2010]{13C60, 13H10, 16G60}
\keywords{Cohen--Macaulay ring, Gorenstein ring, hypersurface, isolated singularity, maximal Cohen--Macaulay module, punctured spectrum, representation type, singular locus}
\thanks{TK was partly supported by JSPS Grant-in-Aid for JSPS Fellows 18J20660 and JSPS Overseas Challenge Program for Young Researchers.
RT was partly supported by JSPS Grant-in-Aid for Scientific Research 16K05098 and JSPS Fund for the Promotion of Joint International Research 16KK0099}
\begin{abstract}
We say that a Cohen--Macaulay local ring has finite $\cmp$-representation type if there exist only finitely many isomorphism classes of indecomposable maximal Cohen--Macaulay modules that are not locally free on the punctured spectrum.
In this paper, we consider finite $\cmp$-representation type from various points of view, relating it with several conjectures on finite/countable Cohen--Macaulay representation type.
We prove in dimension one that the Gorenstein local rings of finite $\cmp$-representation type are exactly the local hypersurfaces of countable $\cm$-representation type, that is, the hypersurfaces of type $(\A_\infty)$ and $(\D_\infty)$.
We also discuss the closedness and dimension of the singular locus of a Cohen--Macaulay local ring of finite $\cmp$-representation type.
\end{abstract}
\maketitle
\section{Introduction}

Cohen--Macaulay representation theory has been studied widely and deeply for more than four decades.
The theorems of Herzog \cite{H} in the 1970s and of Buchweitz, Greuel and Schreyer \cite{BGS} in the 1980s are recognized as some of the most crucial results in this long history of Cohen--Macaulay representation theory.
Both are concerned with Cohen--Macaulay local rings of finite/countable $\cm$-representation type, that is, Cohen--Macaulay local rings possessing finitely/infinitely-but-countably many nonisomorphic indecomposable maximal Cohen--Macaulay modules.
Herzog proved that quotient singularities of dimension two have finite $\cm$-representation type and that Gorenstein local rings of finite $\cm$-representation type are hypersurfaces.
Buchweitz, Greuel and Schreyer proved that the local hypersurfaces of finite (resp. countable) $\cm$-representation type are precisely the local hypersurfaces of type $(\A_n)$ with $n\ge1$, $(\D_n)$ with $n\ge4$, and $(\E_n)$ with $n=6,7,8$ (resp. $(\A_\infty)$ and $(\D_\infty)$).

At the beginning of this century, Huneke and Leuschke \cite{HL02} proved that Cohen--Macaulay local rings of finite $\cm$-representation type have isolated singularities.
However, there are ample examples of Cohen--Macaulay local rings not having isolated singularities, including the local hypersurfaces of type $(\A_\infty)$ and $(\D_\infty)$ appearing above.
Cohen--Macaulay representation theory for non-isolated singularities has been studied by many authors so far; see \cite{AKM,BD,HN,IW} for instance.
It should be remarked that a Cohen--Macaulay local ring with a non-isolated singularity always admits maximal Cohen--Macaulay modules that are {\em not} locally free on the punctured spectrum.
Focusing on these modules, Araya, Iima and Takahashi \cite{hsccm} found out that the local hypersurfaces of type $(\A_\infty)$ and $(\D_\infty)$ have {\em finite $\cmp$-representation type}, that is, there exist only finitely many isomorphism classes of indecomposable maximal Cohen--Macaulay modules that are {\em not} locally free on the punctured spectrum.

In this paper, we investigate Cohen--Macaulay local rings of finite $\cmp$-representation type from various viewpoints.
Our basic landmark is the following conjecture, which includes the converse of the result of Araya, Iima and Takahashi stated above.
We shall give positive results to this conjecture.

\begin{conj} \label{conj11}
Let $R$ be a complete local Gorenstein ring of dimension $d$ not having an isolated singularity.
Then the following two conditions are equivalent.
\begin{enumerate}[\rm(1)]
\item
The ring $R$ has finite $\cmp$-representation type.
\item The ring $R$ has countable $\cm$-representation type.

\end{enumerate}
\end{conj}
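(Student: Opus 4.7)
The strategy is to reduce both implications to the dimension-one theorem proved in this paper combined with the Buchweitz--Greuel--Schreyer classification.

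For (2) $\Rightarrow$ (1), the first step is to establish a countable analogue of Herzog's theorem: that a complete local Gorenstein ring of countable $\cm$-representation type is necessarily a hypersurface. The natural route is a Brauer--Thrall style bound showing that $\e(R) \geq 3$ would produce an uncountable family of nonisomorphic indecomposable MCMs of bounded rank. Once $R$ is known to be a hypersurface, the Buchweitz--Greuel--Schreyer classification (together with Knörrer periodicity in positive dimension) pins $R$ down as a ring of type $(\A_\infty)$ or $(\D_\infty)$, and the theorem of Araya--Iima--Takahashi cited in the introduction then delivers finite $\cmp$-representation type.

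For (1) $\Rightarrow$ (2), the plan is to descend to dimension one by cutting with a regular element. Pick a non-zerodivisor $x \in \m$, superficial with respect to a system of parameters and chosen so that $\v(x)$ is in suitably generic position with respect to $\sing R$. Show that $R/xR$ is Gorenstein, still fails to have an isolated singularity, and inherits finite $\cmp$-representation type. Iterating down to dimension one, the paper's main theorem identifies the base as a hypersurface of type $(\A_\infty)$ or $(\D_\infty)$. Lifting via Knörrer periodicity in reverse (or by an Auslander--Reiten translate style recovery of all indecomposable MCMs over $R$ from those over the hyperplane section) then shows $R$ itself has countable $\cm$-representation type.

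The main obstacle is the descent step in (1) $\Rightarrow$ (2): finite $\cmp$-representation type is not obviously preserved under reduction modulo a regular element. A module $M$ that is locally free on the punctured spectrum of $R$ can easily fail to remain so over $R/xR$, since its non-free locus there picks up the components of $\sing R$ contained in $\v(x)$; and an indecomposable MCM $R$-module can split over $R/xR$. To avoid these pathologies one must choose $x$ so that $\sing R \setminus \{\m\}$ meets $\v(x)$ in a controlled way, which is precisely the type of question addressed by the results on closedness and dimension of the singular locus under finite $\cmp$-representation type appearing in the paper. Exploiting those structural results to engineer the right regular element, while simultaneously tracking indecomposability under reduction, is the technical hurdle that keeps the conjecture open beyond dimension one.
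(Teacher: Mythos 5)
The statement you are addressing is presented in the paper as a \emph{conjecture}, and the paper does not prove it: it only establishes special cases, namely the implication $(2)\Rightarrow(1)$ for hypersurfaces over an uncountable algebraically closed coefficient field of characteristic $\ne2$ (via \cite[Proposition 2.1]{hsccm}) and the full equivalence in dimension one (Theorem \ref{46}). Your proposal likewise does not constitute a proof, and both of its directions rest on open problems. For $(2)\Rightarrow(1)$, your first step --- that a complete Gorenstein local ring of countable $\cm$-representation type is a hypersurface --- is exactly the folklore Conjecture \ref{1} recorded in Section \ref{c}; it is known only for finite $\cm$-representation type and for complete intersections with algebraically closed uncountable residue field, and no ``Brauer--Thrall style'' argument producing an uncountable family of indecomposables from $\e(R)\ge3$ is supplied by you, by the paper, or by the literature the paper cites. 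Moreover, the Buchweitz--Greuel--Schreyer classification you then invoke requires an uncountable algebraically closed coefficient field of characteristic not two, a hypothesis absent from the conjecture as stated; even granting Conjecture \ref{1}, your argument would only cover that restricted setting, which is precisely the setting in which $(2)\Rightarrow(1)$ is already known.

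For $(1)\Rightarrow(2)$ you propose descending to dimension one by a generic hyperplane section, and you correctly flag that preservation of finite $\cmp$-representation type under reduction modulo a regular element is unproven; but this is not a peripheral ``technical hurdle'' --- it is essentially the whole content of the implication, and nothing in the paper supplies it. The structural results on $\sing R$ (Theorem \ref{12}, Corollary \ref{16}) tell you the singular locus is finite of dimension at most one, but they give no control over how indecomposables behave under $-\otimes_R R/xR$, nor over the new components of the nonfree locus that appear after cutting. The paper's actual higher-dimensional technology runs in the opposite direction: Lemma \ref{40} and Proposition \ref{47} construct a Kn\"orrer-type functor $\Phi:\cmp(S/(f))\to\cmp(S[\![x]\!]/(f+x^2g))$ showing that \emph{infinite} $\cmp$-representation type ascends (equivalently, finiteness descends from $R^\sharp$ to $R$), and this applies only to equations of the special form $f+x^2g$; the remaining results of Section \ref{h} (Theorem \ref{22}, Corollaries \ref{69}, \ref{55}, \ref{56}) extract necessary conditions through explicit matrix factorizations, birational extensions and integral closures rather than by cutting down and lifting back. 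There is no ``reverse Kn\"orrer'' or Auslander--Reiten recovery step of the kind your lifting argument would require. In short, your outline reproduces the heuristic picture motivating the conjecture, but it closes neither implication.
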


Combining the result of Buchweitz, Greuel and Schreyer, this
conjecture says that, when $R$ is a hypersurface having an uncountable
algebraically closed coefficient field of characteristic not $2$,
condition (2) is equivalent to $R$ being an $(\A_{\infty})$ or
$(\D_{\infty})$ singularity. In this setting, the implication
$(2)\Rightarrow(1)$ holds by \cite[Proposition 2.1]{hsccm}.

From now on, we state our main results and the organization of this paper.
Section \ref{p} is devoted to a couple of preliminary definitions and lemmas, while Section \ref{c} presents some conjectures and questions on finite/countable $\cm$-representation type.
Our results are stated in the later sections.
In what follows, let $R$ be a Cohen--Macaulay local ring.

In Section \ref{s}, we consider the (Zariski-)closedness and (Krull) dimension of the singular locus $\sing R$ of $R$ in connection with the works of Huneke and Leuschke \cite{HL02,HL03}.
As we state above, they proved in \cite{HL02} that if $R$ has finite $\cm$-representation type, then it has an isolated singularity, i.e., $\sing R$ has dimension at most zero.
Also, they showed in \cite{HL03} that if $R$ is complete or has uncountable residue field, and has countable $\cm$-representation type, then $\sing R$ has dimension at most one.
In relation to these results, we prove the following theorem, whose second assertion extends the result of Huneke and Leuschke \cite{HL03} from countable $\cm$-representation type to countable $\cmp$-representation type (i.e., having infinitely but countably many nonisomorphic indecomposable maximal Cohen--Macaulay modules that are not locally free on the punctured spectrum).

\begin{thm}[Theorem \ref{12} and Corollary \ref{16}]
Let $(R,\m,k)$ be a Cohen--Macaulay local ring.
\begin{enumerate}[\rm(1)]
\item
Suppose that $R$ has finite $\cmp$-representation type.
Then the singular locus $\sing R$ is a finite set.
Equivalently, it is a closed subset of $\spec R$ with dimension at most one.
\item
Suppose that $R$ has countable $\cmp$-representation type.
Then the set $\sing R$ is at most countable.
It has dimension at most one if $R$ is either complete or $k$ is uncountable.
\end{enumerate}
\end{thm}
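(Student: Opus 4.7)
The plan is to translate each non-maximal singular prime into the unique minimum of the non-free locus of some indecomposable MCM module not locally free on the punctured spectrum, then use finite (or countable) $\cmp$-representation type to bound the cardinality of $\sing R$ via an injection.

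The key lemma is the following: for every non-maximal $\p\in\sing R$ there exists an indecomposable MCM module $N$ with $\p$ the unique minimum of $\nf(N)=\{\q\in\spec R:N_{\q}\text{ is not }R_{\q}\text{-free}\}$. Set $M=\syz^{d}_R(R/\p)$, which is MCM by the depth lemma since $R$ is Cohen--Macaulay of dimension $d$. Because $R_{\p}$ is not regular, $k(\p)=(R/\p)_{\p}$ has infinite projective dimension over $R_{\p}$, which forces $M_{\p}$ to be non-free; on the other hand, whenever $\p\not\subseteq\p'$, $(R/\p)_{\p'}=0$, so $M_{\p'}$ is free over $R_{\p'}$. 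Choosing an indecomposable summand $N$ of $M$ that is not free at $\p$ gives an indecomposable MCM module not locally free on the punctured spectrum (since $\p\neq\m$) with $\nf(N)\subseteq\v(\p)$ and $\p\in\nf(N)$, hence $\p=\min\nf(N)$.

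This lemma produces an injection $\sing R\setminus\{\m\}\hookrightarrow\{[N]\}$, where $[N]$ ranges over isomorphism classes of indecomposable MCM modules not locally free on the punctured spectrum: distinct $\p$'s yield non-isomorphic $N$'s because isomorphic modules share the same non-free locus and hence the same unique minimum. Under the hypothesis of~(1) the target is finite, so $\sing R$ is finite; under the hypothesis of~(2) it is at most countable, so $\sing R$ is at most countable. For the equivalence claim in~(1), specialization-closedness of $\sing R$ gives $\sing R=\bigcup_{\p\in\sing R}\v(\p)$, so a finite $\sing R$ is a finite union of closed sets (hence closed), and each $\v(\p)\subseteq\sing R$ is finite, forcing $\dim R/\p\le 1$; the converse implication (a closed subset of $\spec R$ of dimension at most one is automatically finite in a local ring) is standard.

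For the dimension bound in~(2) under the extra hypothesis, I would argue by contradiction: if $\dim\sing R\ge 2$, pick $\p\in\sing R$ with $\dim R/\p\ge 2$; the hypothesis of completeness or uncountable residue field passes to the local domain $R/\p$, and the classical prime-density result exploited in \cite{HL03}---that such a local ring of dimension at least two has uncountably many height-one primes---produces uncountably many primes strictly between $\p$ and $\m$, all in $\sing R$ by upward-closedness, contradicting its countability. The main obstacle is the key lemma: confining $\nf(N)$ inside $\v(\p)$ depends on the specific test module $M=\syz^{d}(R/\p)$ together with the vanishing of $(R/\p)_{\p'}$ off $\v(\p)$. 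With that lemma in hand, both cardinality statements reduce to a clean injection argument, and the dimension bound in~(2) is a direct reduction to the Huneke--Leuschke density theorem for primes.
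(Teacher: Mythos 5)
Your proposal is correct, and the cardinality argument takes a genuinely different route from the paper's. Both proofs start from the same test module $C=\syz_R^d(R/\p)$ and both verify that $C_\p$ is non-free (so some indecomposable summand lies in $\cmp(R)$), but they recover $\p$ from module-theoretic data in different ways. The paper shows $\p=\ann_R\Tor_1^R(C,C)$, decomposes $C$ into indecomposables from the finite list plus a piece in $\cmz(R)$, writes $\p$ as a finite intersection of annihilators of Tor modules, and uses irreducibility of prime ideals to conclude $\p=\ann_R\Tor_1^R(G_i,G_j)$ for a pair from the list; this bounds $\#(\sing R\setminus\{\m\})$ by the number of such pairs. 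You instead observe that $\nf(C)\subseteq\v(\p)$ (because $(R/\p)_{\p'}=0$ off $\v(\p)$ makes the localized syzygy free) while $\p\in\nf(C)$, so $\p$ is the unique minimum of $\nf(N)$ for a suitable indecomposable summand $N$; since isomorphic modules have equal non-free loci, this gives a clean injection $\sing R\setminus\{\m\}\hookrightarrow\ind\cmp(R)$, avoiding the Tor computation and the irreducibility argument and yielding the slightly sharper bound $\#\sing R\le\#\ind\cmp(R)+1$. The remaining pieces match the paper: the equivalence in (1) is the paper's Lemma on finiteness versus closedness plus dimension at most one (resting on the fact that a local domain of dimension at least two has infinitely many height-one primes, which you should state explicitly when asserting that finiteness of $\v(\p)$ forces $\dim R/\p\le1$), and the dimension bound in (2) is exactly the Huneke--Leuschke density argument the paper invokes via its citation.
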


Furthermore, Huneke and Leuschke \cite{HL03} proved that if $R$ admits a canonical module and has countable $\cm$-representation type, then the localization $R_\p$ at each prime ideal $\p$ of $R$ has at most countable $\cm$-representation type as well.
We prove a result on finite $\cmp$-representation type in the same context.

\begin{thm}[Theorem \ref{45}]
Let $(R,\m)$ be a Cohen--Macaulay local ring with a canonical module.
Suppose that $R$ has finite $\cmp$-representation type.
Then $R_\p$ has finite $\cm$-representation type for all $\p\in \spec R\setminus\{\m\}$.
In particular, $R_\p$ has finite $\cmp$-representation type for all $\p\in\spec R$.
\end{thm}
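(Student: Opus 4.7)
The "in particular" clause follows from the main assertion together with the obvious fact that finite $\cm$-representation type implies finite $\cmp$-representation type, and from the case $\p=\m$ being the hypothesis. For the main assertion I will argue the contrapositive. Assuming some $\p\in\spec R\setminus\{\m\}$ for which $R_\p$ carries infinitely many pairwise nonisomorphic indecomposable nonfree MCM modules $N_1,N_2,\ldots$, I will construct infinitely many pairwise nonisomorphic indecomposable MCM $R$-modules that are not locally free at $\p$ (and hence not locally free on the punctured spectrum), contradicting the hypothesis of finite $\cmp$-representation type.

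The construction is a standard lifting followed by Cohen--Macaulay approximation. For each $i$, take a finite presentation of $N_i$ over $R_\p$ and clear denominators in the presentation matrix to obtain a finitely generated $R$-module $M_i$ with $(M_i)_\p\cong N_i$. Since $R$ admits a canonical module $\omega$, the Auslander--Buchweitz theory supplies a short exact sequence $0\to Y_i\to X_i\to M_i\to 0$ with $X_i$ maximal Cohen--Macaulay over $R$ and $Y_i$ of finite injective dimension. Localizing at $\p$, the depth lemma applied to $0\to (Y_i)_\p\to (X_i)_\p\to N_i\to 0$ forces $(Y_i)_\p$ to be MCM; being also of finite injective dimension over the Cohen--Macaulay local ring $R_\p$ with canonical module $\omega_\p$, it is isomorphic to $\omega_\p^{c_i}$ for some $c_i\geq 0$. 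The vanishing $\Ext^{1}_{R_\p}(N_i,\omega_\p)=0$ splits the localized sequence, giving $(X_i)_\p\cong N_i\oplus\omega_\p^{c_i}$; in particular $X_i$ is not locally free at $\p$.

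To finish, decompose each $X_i$ into an $R$-indecomposable direct sum and write $X_i=A_i\oplus B_i$, where $A_i$ collects those summands that are not locally free at $\p$ and $B_i$ the remaining ones (so $(B_i)_\p$ is a free $R_\p$-module). By finite $\cmp$-representation type the indecomposable constituents of the $A_i$'s lie in a fixed finite list $\{W^{(1)},\ldots,W^{(r)}\}$, and consequently the indecomposable $R_\p$-summands of the $(A_i)_\p$'s belong to the finite set $\mathcal{G}$ of indecomposable summands of the fixed module $\bigoplus_k(W^{(k)})_\p$. Since $N_i$ is a nonfree indecomposable summand of $(X_i)_\p=(A_i)_\p\oplus(B_i)_\p$ and $(B_i)_\p$ is free, Azumaya's theorem places $N_i$ in $\mathcal{G}$, contradicting the infinitude of the $N_i$. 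The principal technical obstacle in this program is the final Azumaya-style step over the possibly non-henselian $R_\p$, where an indecomposable finitely generated module need not have a local endomorphism ring; I expect to handle this either by carrying out the entire argument inside the $\m$-adic completion $\widehat{R}$, where Krull--Schmidt is automatic and both the canonical module and the Auslander--Buchweitz approximation persist, then descending the iso-class bound back to $R$, or by bypassing Azumaya in favor of a direct generator-counting argument applied to the fixed module $\bigoplus_k(W^{(k)})_\p$.
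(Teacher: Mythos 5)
Your first half coincides with the paper's proof: lift $N_i$ to an $R$-module, take the Auslander--Buchweitz approximation, observe that $(Y_i)_\p$ is maximal Cohen--Macaulay of finite injective dimension and hence isomorphic to $\omega_\p^{\oplus c_i}$, and split the localized sequence to get $(X_i)_\p\cong N_i\oplus\omega_\p^{\oplus c_i}$. The genuine gap is exactly the one you flag at the end, and neither of your proposed repairs closes it as stated. Over the (generally non-henselian) ring $R_\p$ an indecomposable finitely generated module need not have local endomorphism ring, so Azumaya/Krull--Schmidt is unavailable: an indecomposable summand of $(A_i)_\p$, which is a direct sum of the modules $(W^{(k)})_\p$ \emph{with multiplicities depending on $i$}, need not be a summand of the single module $\bigoplus_k(W^{(k)})_\p$, so your finite set $\mathcal{G}$ does not capture them; and placing $N_i$ itself into $\mathcal{G}$ is likewise unjustified. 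Your fix (a) is worse than it looks: finite $\cmp$-representation type is not known to ascend to $\widehat R$ (the paper proves no such ascent), and $\p$ does not correspond to a single prime of $\widehat R$, so ``carrying out the entire argument inside $\widehat R$'' is not a routine reduction; fix (b) is not an argument.

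The paper closes this gap by never comparing arbitrary indecomposables over $R_\p$. It decomposes $X$ into indecomposables over $R$, writes each $(X_j)_\p=Z_j\oplus\omega_\p^{\oplus l_j}$ with $Z_j$ having no $\omega_\p$-summand, and cancels only $\omega_\p$, which is legitimate because $\End_{R_\p}(\omega_\p)\cong R_\p$ \emph{is} local (Lemmas 1.2 and 2.1 of Leuschke--Wiegand). Then $Z_1\oplus\cdots\oplus Z_m\cong N$ forces all but one $Z_j$ to vanish by bare indecomposability of $N$, with no Krull--Schmidt input; the same $\omega_\p$-cancellation shows $C_i\cong C_j$ implies $N_i\cong N_j$ (for this one starts with $N_i\in\ind\cm(R_\p)\setminus\{\omega_\p\}$ rather than the nonfree indecomposables). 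Your route could alternatively be completed by replacing the Azumaya step with the monoid-theoretic fact (Wiegand's Krull-monoid description of direct-sum decompositions over a local ring) that $\add_{R_\p}(U)$ contains only finitely many indecomposables up to isomorphism for the fixed module $U=R_\p\oplus\omega_\p\oplus\bigoplus_k(W^{(k)})_\p$; but as written the decisive step is missing.
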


In Section \ref{n} we provide various necessary conditions for a given Cohen--Macaulay local ring to have finite $\cmp$-representation type.

\begin{thm}[Theorem \ref{19}]
Let $(R,\m)$ be a Cohen--Macaulay local ring of dimension $d>0$.
Let $I$ be an ideal of $R$ such that $R/I$ is maximal Cohen--Macaulay over $R$.
Then $R$ has infinite $\cmp$-representation type in each of the following cases.
\begin{enumerate}[\rm(1)]
\item
The ring $R/I$ has infinite $\cmp$-representation type.
\item
The set $\v(I)$ is contained in $\v(0:I)$, and either $R/I$ has infinite $\cm$-representation type or $d\ge2$.
\item
The ideal $I+(0:I)$ is not $\m$-primary, $R/I$ has infinite $\cm$-representation type, and $R/I$ is either Gorenstein, a domain, or analytically unramified with $d=1$.
\end{enumerate}
\end{thm}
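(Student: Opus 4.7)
The plan is to handle (1), (2), and (3) separately, each time producing infinitely many pairwise nonisomorphic indecomposable MCM $R$-modules that fail to be locally free on the punctured spectrum. All three arguments rest on the following principle: since $R/I$ is MCM over $R$, every MCM $R/I$-module $M$ is automatically MCM over $R$, and nonisomorphism and indecomposability transfer up (as $I$ annihilates $M$, the $R$- and $R/I$-endomorphism rings of $M$ coincide). Moreover, whenever $M_\p\ne 0$ is $R_\p$-free, the annihilation forces $IR_\p=0$, i.e. $(0:I)\not\subseteq\p$; equivalently, $(0:I)\subseteq\p$ forces a nonzero $M_\p$ to be non-free.

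For (1), if $M$ is not locally free on the punctured spectrum of $R/I$, choose $\p\supseteq I$ with $\p\ne\m$ and $M_\p$ not $(R/I)_\p$-free. If $M_\p$ were $R_\p$-free, then $IR_\p=0$ would force $(R/I)_\p=R_\p$ and therefore $(R/I)_\p$-freeness of $M_\p$, a contradiction. Thus the infinite family of indecomposable MCM $R/I$-modules transfers up to $R$.

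For (2), the hypothesis $\v(I)\subseteq\v(0:I)$, combined with $\v(I)\cup\v(0:I)=\spec R$ (which follows from $I\cdot(0:I)=0$), forces $(0:I)\subseteq\p$ for every prime $\p$. By the principle, every nonzero MCM $R/I$-module is non-locally-free at each non-maximal prime in its support, and such a prime exists because $\dim_R M=d\ge 1$. If $R/I$ has infinite $\cm$-representation type we are done by transfer. In the remaining case $d\ge 2$ with $R/I$ of finite $\cm$-representation type, I would iterate the $R$-syzygy on $R/I$: each $\syz_R^n(R/I)$ is MCM (depth lemma) and remains non-locally-free at every prime where $R/I$ is, since an MCM module of finite projective dimension at a prime must be free there by Auslander--Buchsbaum. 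The main technical obstacle is to show that the indecomposable summands of these syzygies occupy infinitely many isomorphism classes; eventual syzygy-periodicity of $R/I$ over $R$ must be ruled out when $d\ge 2$, which I expect to follow from a preparatory lemma in Section~\ref{p} together with the Cohen--Macaulay hypothesis on $R$.

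For (3), use the hypothesis to pick a prime $\q\ne\m$ with $I+(0:I)\subseteq\q$. By the principle, any MCM $R/I$-module $M$ with $\q\in\supp_R M$ is not locally free on the punctured spectrum of $R$. We must extract from $R/I$'s infinite $\cm$-representation type an infinite subfamily with $\q$ in support. When $R/I$ is a domain, every nonzero MCM has full support $\v(I)\ni\q$, and the conclusion is immediate. When $R/I$ is analytically unramified with $d=1$, $\q$ is a minimal prime of $R/I$, and an indecomposable MCM missing $\q$ lives over a proper CM quotient of $R/I$ with strictly fewer minimal primes, so an induction on the number of minimal primes (using the classification of MCM modules in the one-dimensional analytically unramified case via the normalization) leaves infinitely many modules meeting $\q$. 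The Gorenstein sub-case is the main obstacle: here an indecomposable MCM $M$ with $\q\notin\supp M$ must be annihilated by some $x\notin\q$, hence sits over the proper CM quotient $R/(I+(x))$; combining Gorenstein duality with Huneke--Leuschke's isolated-singularity theorem applied to this quotient, I would bound such $M$ to finitely many isomorphism classes, leaving an infinite subfamily of indecomposables with $\q$ in support.
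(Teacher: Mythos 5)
Your reduction to modules over $R/I$, the observation that $(0:I)\subseteq\p$ forces a nonzero $M_\p$ to be non-free over $R_\p$, and your treatment of (1) and the domain subcase of (3) are all correct and match the paper's approach. But two of the arguments have genuine gaps.

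For (2) in the case $d\ge2$ with $R/I$ of finite $\cm$-representation type, iterating $\syz_R^n(R/I)$ cannot in general produce infinitely many isomorphism classes: if $R$ happens to be a hypersurface, every maximal Cohen--Macaulay $R$-module is eventually $2$-periodic, so the iteration yields only finitely many classes no matter how large $d$ is. There is no lemma in Section~\ref{p} that rules this out, and you should not expect one. The paper instead argues via the singular locus: finite $\cm$-representation type makes $R/I$ an isolated singularity by \cite[Corollary 2]{HL02}, hence (using $d\ge2$) a normal domain, so $I$ is a minimal prime $\p$ of $R$; the hypothesis $\v(\p)\subseteq\v(0:\p)$ localized at $\p$ gives $(0:_{R_\p}\p R_\p)\subseteq\p R_\p$, so $R_\p$ is not a field and $\p\in\sing R$ with $\dim R/\p=d\ge2$, contradicting $\dim\sing R\le1$ from Corollary~\ref{16}(1). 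This uses the singular-locus bound, not a periodicity obstruction.

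For (3) in the Gorenstein case, your plan to apply Huneke--Leuschke to the quotient $R/(I+(x))$ cannot bound the number of indecomposable MCM $R/I$-modules missing $\q$ from their support: nothing forces $R/(I+(x))$ to have finite $\cm$-representation type, and the element $x$ varies with $M$ anyway. The paper's device is the syzygy trick. If $M\in\ind\cmz(R/I)$ has $M_\q=0$, localizing $0\to\syz_{R/I}M\to(R/I)^{\oplus\nu(M)}\to M\to0$ at $\q$ gives $(\syz_{R/I}M)_\q\cong(R_\q/IR_\q)^{\oplus\nu(M)}\ne0$; since $R/I$ is Gorenstein, $\syz_{R/I}M$ is again indecomposable and locally free on the punctured spectrum of $R/I$, hence lies in $\ind\cmp(R)$ by the first step. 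As $\syz_{R/I}$ is injective on isomorphism classes of indecomposable MCMs over the Gorenstein ring $R/I$, finiteness of $\ind\cmp(R)$ forces the set of $M\in\ind\cmz(R/I)$ that do \emph{not} already lie in $\ind\cmp(R)$ to be finite, contradicting infinite $\cm$-type. Your analytically unramified sketch is in the right spirit but is not carried out; the paper simply cites \cite[Theorem 4.10]{LW} to get infinitely many nonisomorphic indecomposable MCM $R/I$-modules admitting a rank, each of which has full support and hence meets $\q$.
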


This theorem may look technical, but it actually gives rise to a lot of restrictions which having finite $\cmp$-representation type produces, and is used in the later sections. One concrete example where Theorem \ref{19} applies is when $I=(x)$ and $(0:x)=(x)$; see Corollary \ref{30}.
Here we introduce one of the applications of the above theorem.
Denote by $\cm(R)$ the category of maximal Cohen--Macaulay $R$-modules, and by $\ds(R)$ the singularity category of $R$.

\begin{thm}[Theorem \ref{15}]
Let $R$ be a Cohen--Macaulay local ring of dimension $d>0$.
Let $I$ be an ideal of $R$ with $\v(I)\subseteq\v(0:I)$ such that $R/I$ is maximal Cohen--Macaulay over $R$.
Suppose that $R$ has finite $\cmp$-representation type.
Then one must have $d=1$.
If $I^n=0$ for some integer $n>0$, then $\cm(R)$ has dimension at most $n-1$ in the sense of \cite{dim}.
If $R$ is Gorenstein, then $R$ is a hypersurface and $\ds(R)$ has dimension at most $n-1$ in the sense of \cite{R}.
\end{thm}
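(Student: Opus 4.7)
The plan is to derive (1) directly from Theorem~\ref{19}, part (2) from the standard filtration technique of \cite{dim}, and part (3) from the Frobenius structure on $\cm(R)$ together with a hypersurface argument for Gorenstein rings.

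For (1), I apply Theorem~\ref{19}(2) contrapositively: under our standing hypotheses $\v(I)\subseteq\v(0:I)$ and $R/I$ MCM over $R$, if either $d\ge 2$ or $R/I$ had infinite $\cm$-representation type, then $R$ would have infinite $\cmp$-representation type, contradicting our assumption. Combined with $d>0$, this yields $d=1$, and as a useful byproduct $R/I$ has finite $\cm$-representation type. This last fact is the key input to both (2) and (3).

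For (2), assume $I^n=0$. By (1) there is a representation generator $G$ of $\cm(R/I)$. Given $M\in\cm(R)$, consider the $I$-adic filtration
\[
M\supseteq IM\supseteq I^2M\supseteq\cdots\supseteq I^nM=0,
\]
whose successive quotients $I^kM/I^{k+1}M$ are annihilated by $I$ and hence are $R/I$-modules; using finite $\cm$-representation type of $R/I$ (and a syzygy step in $\mod(R/I)$ if the quotient fails to be MCM over $R/I$, which is permitted since $R/I$ is one-dimensional CM), each such quotient lies in the first ball $[\add G]_1$ of \cite{dim}. Splicing the $n$ short exact sequences $0\to I^{k+1}M\to I^kM\to I^kM/I^{k+1}M\to 0$ together then shows $M\in[\add G]_n$, giving $\dim\cm(R)\le n-1$.

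For (3), assuming $R$ is Gorenstein, the category $\cm(R)$ is Frobenius with projective-injectives $\add R$, and $\lcm(R)\simeq\ds(R)$ as triangulated categories. Killing projectives transports the Dao--Takahashi ball $[\add G]_n$ to the Rouquier thick-closure $\langle G\rangle_n$ in $\ds(R)$, so the bound from (2) yields $\dim\ds(R)\le n-1$. The main obstacle is the hypersurface conclusion. My plan is to exploit the Gorenstein duality $\Hom_R(R/I,R)=0:I$, which realizes the canonical module of $R/I$ as the ideal $0:I$ (this uses $\grade I=0$, valid because $R/I$ is MCM of the same dimension as $R$), together with the condition $\v(I)\subseteq\v(0:I)$ (equivalently, $0:I$ is contained in the nilradical of $R$) and the conclusion of Theorem~\ref{19}(3) applied to a suitable auxiliary ideal. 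Combining these should force the embedding codepth $\edim R-1$ of the one-dimensional Gorenstein ring $R$ to be at most $1$; a natural route is to reduce to a setting where $R/(0:I)$ is close to Gorenstein of finite $\cm$-representation type and then invoke Herzog's theorem on that reduced model. This last step is the most delicate point and is where I expect the bulk of the argument to lie.
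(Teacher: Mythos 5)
Your part (1) is exactly the paper's argument (the contrapositive of Theorem \ref{19}(2)), and extracting from it that $R/I$ has finite $\cm$-representation type is also how the paper begins part (2). But the filtration you use in part (2) does not work. The graded pieces $I^kM/I^{k+1}M$ of the $I$-adic filtration need not be maximal Cohen--Macaulay: for $R=S/(x^2)$ with $S$ regular of dimension two, $I=(x)$ and $M=\m$, the piece $M/IM=\m/x\m$ has depth zero (the class of $x$ is killed by $\m$). Since every object of every ball ${[G]}_r$ is maximal Cohen--Macaulay over $R$ (the balls are generated from $R$ and $R$-syzygies of $G$ inside $\cm(R)$), a depth-zero quotient lies in no ball, and your induction collapses at the first such step. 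The proposed repair --- ``a syzygy step in $\mod(R/I)$'' --- does not help, because ${[G]}_1$ is closed under passing \emph{to} syzygies of its objects, not the reverse: knowing $\syz_{R/I}Q\in{[G]}_1$ says nothing about $Q$. The paper avoids this by filtering with socles rather than powers of $I$: it sets $M_{i+1}=M_i/(0:_{M_i}I)$, so the subobjects $(0:_{M_i}I)$ are automatically maximal Cohen--Macaulay $R/I$-modules (submodules of MCM modules over a one-dimensional ring have positive depth), and the quotients $M_{i+1}$ are shown to be MCM by embedding $M_i/(0:_{M_i}I)$ into $\Hom_R(I,M_{i-1})$, which has positive depth. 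You would need to switch to that filtration.

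The hypersurface assertion in part (3) is not proved; you say yourself that the bulk of the argument would lie there. The route you sketch (canonical duality for $R/I$ plus Herzog's theorem on a ``reduced model'') is not the right mechanism: Herzog's theorem applied to a proper quotient of $R$ cannot bound $\codepth R$ itself. The paper's argument is a complexity computation: because $\ind\cmp(R)$ is finite and stable under $\syz_R$ (Lemma \ref{8}), every module in it is eventually periodic and hence has complexity one (Lemma \ref{41}); one checks that $R/I\in\ind\cmp(R)$ (since $\nf_R(R/I)=\v(I+(0:I))=\v(I)$ contains a nonmaximal prime) and that every indecomposable summand of $\syz_{R/I}^dk$ lies in $\ind\cmp(R)$ by Proposition \ref{10}(3); splicing the $R/I$-free resolution of $k$ and applying Lemma \ref{9}(2) then yields $\cx_Rk\le1$, so $R$ is a hypersurface by Avramov's theorem. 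Your transport of the dimension bound to $\ds(R)$ via Buchweitz's equivalence is the same as the paper's, but it of course depends on first repairing part (2).
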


There are folklore conjectures that a Gorenstein local ring of countable $\cm$-representation type is a hypersurface, and that, for a Cohen--Macaulay local ring $R$ of countable $\cm$-representation type, $\cm(R)$ has dimension at most one.
The above theorem gives partial answers to the variants of these folklore conjectures for finite $\cmp$-representation type.

In Section \ref{m}, we prove the following, which characterizes the Gorenstein rings or finite $\cmp$-representation type not having an isolated singularity in the dimension $1$ case. This theorem has the consequence of answering Conjecture \ref{conj11} in the affirmative when $R$ has an uncountable algebraically closed coefficient field of characteristic not equal to $2$.

\begin{thm}[Theorem \ref{46}]
Let $R$ be a homomorphic image of a regular local ring.
Suppose that $R$ does not have an isolated singularity but is Gorenstein.
If $\dim R=1$, the following are equivalent.
\begin{enumerate}[\rm(1)]
\item
The ring $R$ has finite $\cmp$-representation type.
\item
There exist a regular local ring $S$ and a regular system of parameters $x,y$ such that $R$ is isomorphic to $S/(x^2)$ or $S/(x^2y)$.
\end{enumerate}
When either of these two conditions holds, the ring $R$ has countable $\cm$-representation type.
\end{thm}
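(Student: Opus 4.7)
I would treat the two implications and the concluding statement separately.

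For $(2)\Rightarrow(1)$ and the last assertion, the rings $R\cong S/(x^2)$ and $R\cong S/(x^2y)$ are the $(\A_\infty)$ and $(\D_\infty)$ one-dimensional hypersurface singularities; they have finite $\cmp$-representation type by \cite[Proposition 2.1]{hsccm} and countable $\cm$-representation type by Buchweitz--Greuel--Schreyer \cite{BGS} applied after passing to the completion, or by a direct analysis of matrix factorizations of $x^2$ and $x^2y$.

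For the main direction $(1)\Rightarrow(2)$ I would first reduce to the hypersurface case.  The non-isolated singularity hypothesis supplies a minimal prime $\p$ of $R$ with $R_\p$ not a field; for such $\p$ one has $\ann_R(\p)\subseteq\p$ (equivalently $\v(\p)\subseteq\v(0:\p)$), since otherwise a unit of $R_\p$ would kill $\p R_\p$.  Moreover $R/\p$ is a one-dimensional Noetherian domain, so it is a maximal Cohen--Macaulay $R$-module.  Thus Theorem \ref{15} applies with $I=\p$, and the Gorenstein hypothesis then yields that $R$ is a hypersurface.  Writing $R\cong S/(f)$ with $S$ a two-dimensional regular local ring and factoring $f=g_1^{e_1}\cdots g_r^{e_r}$ into distinct irreducibles of $S$, the non-isolated singularity condition forces $e_i\ge2$ for some $i$.

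The second step is to pin $f$ down to $g^2$ or $g^2h$ with $g,h$ regular parameters of $S$.  For each excluded configuration---three or more irreducible factors, some exponent at least $3$, or an irreducible factor lying in $\m_S^2$---I would build an ideal $I\subset R$ from the factorization of $f$ and invoke Theorem \ref{19} or Corollary \ref{30} to contradict the finite $\cmp$-representation type of $R$.  For example, if some $e_i\ge3$, the ideal $I$ generated by the image of $g_i^{e_i-1}\prod_{j\ne i}g_j^{e_j}$ yields $R/I$, a one-dimensional hypersurface with the same irreducible support but strictly smaller total degree and still non-isolated; an inductive argument or direct count of matrix factorizations produces infinitely many indecomposable MCM modules on $R/I$, feeding Theorem \ref{19} to contradict finite $\cmp$ for $R$.

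The hardest part is the combinatorial bookkeeping in step two: each excluded shape of $f$ requires a separately constructed ideal $I$ and verification of the hypotheses of Theorem \ref{19}, especially $\v(I)\subseteq\v(0:I)$, which fails for naive choices such as the full nilradical of $R$.  The subcase where an irreducible factor lies in $\m_S^2$ is especially subtle and seems to require the Gorenstein structure of $R$ in an essential way, via duality applied to the relevant annihilators.
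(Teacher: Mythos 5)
Your reduction to the hypersurface case is sound and is essentially the paper's Corollary \ref{42}: the minimal prime $\p\in\sing R$ satisfies $\v(\p)\subseteq\v(0:\p)$, the quotient $R/\p$ is maximal Cohen--Macaulay, and Theorem \ref{15}(3) applies. The easy exclusions (some exponent at least $3$, or two exponents at least $2$, or at least four irreducible factors) via Theorem \ref{19}(3a) and the multiplicity bound also match the paper's Corollary \ref{39}, as does citing \cite[Proposition 2.1]{hsccm} and \cite{BGS} for $(2)\Rightarrow(1)$ and the final assertion.

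The gap lies in the remaining exclusions, which are the real content of the theorem and which the tools you invoke cannot reach. After the Corollary \ref{39} stage one is left with $f=p^2$, $p^2q$, or $p^2qr$ where the reduced part already has \emph{finite} $\cm$-representation type, and one must still rule out $f=p^2qr$ entirely, $f=p^2$ with $p\in\n^2$, and $f=p^2q$ with a factor in $\n^2$. In each of these cases every candidate ideal $I$ yields a quotient $R/I$ that is either regular, an ADE curve of finite $\cm$-type, or an $(\A_\infty)$/$(\D_\infty)$-type ring of finite $\cmp$-type, so neither Theorem \ref{19} nor Corollary \ref{30} applied to $R$ itself can produce a contradiction. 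The paper instead requires two genuinely new devices occupying all of Section \ref{m}: for $S/(p^2)$ with $p\in\n^2$, a finite birational extension $T=S[z]/(tz-p,z^2)$, which is a codimension-two complete intersection with $(0:_Tz)=zT$, so that Corollary \ref{30} applies to $T$ (not to $R$) and Lemma \ref{29} transports $\ind\cmp(T)\subseteq\ind\cmp(R)$; and for $S/(p^2qr)$ and $S/(x^2h)$ with $h\in\n^2$, explicit infinite families of matrix factorizations $M_i=\cok_SA_i$ (with entries such as $r^i$ and $y^i$), together with a delicate classification of their possible cyclic direct summands (Lemmas \ref{24}--\ref{26} and \ref{36}--\ref{38}) showing that decomposability of all but finitely many $M_i$ would force an absurdity like $R/(p)\cong R/(pqr)$. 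Your phrases ``inductive argument or direct count of matrix factorizations'' and ``duality applied to the relevant annihilators'' do not supply these constructions, and without them the implication $(1)\Rightarrow(2)$ is not proved.
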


In Section \ref{h}, we explore the higher-dimensional case, that is, we try to understand the Cohen--Macaulay local rings $R$ of finite $\cmp$-representation type in the case where $\dim R\ge2$.
We prove the following two results in this section.

\begin{thm}[Corollary \ref{69}]
Let $R$ be a complete local hypersurface of dimension $d\ge2$ which is not an integral domain.
Suppose that $R$ has finite $\cmp$-representation type.
Then one has $d=2$, and there exist a regular local ring $S$ and elements $x,y\in S$ with $R\cong S/(xy)$ such that $S/(x)$ and $S/(y)$ have finite $\cm$-representation type and $S/(x,y)$ is an integral domain of dimension $1$.
\end{thm}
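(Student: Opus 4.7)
By Cohen's structure theorem, write $R\cong S/(f)$ where $S$ is a complete regular local ring of dimension $d+1$. Since $R$ is not a domain, factor $f=\prod_{i=1}^{m}g_i^{a_i}$ in the UFD $S$, with the $g_i$'s pairwise non-associate irreducibles and either $m\ge 2$ or some $a_i\ge 2$. If some $a_i\ge 2$, the localization $R_{(g_i)/(f)}=S_{(g_i)}/(g_i^{a_i})$ has nonzero nilpotent $g_i^{a_i-1}$, so the irreducible component $\v(g_i)\subseteq\spec R$ of dimension $d$ is contained in $\sing R$; this contradicts the bound $\dim\sing R\le 1$ from Theorem \ref{12}. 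Hence $f=g_1\cdots g_m$ is squarefree and $m\ge 2$. Since $g_1,g_2$ are distinct irreducibles in a UFD, they form a regular sequence in $S$, so the crossing locus $\v(g_1,g_2)\subseteq\sing R$ has dimension $d-1$, forcing $d\le 2$; together with $d\ge 2$ this gives $d=2$.

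To force $m=2$, suppose $m\ge 3$ and apply Theorem \ref{19}(3) to $I=g_1g_2R$. The quotient $R/I\cong S/(g_1g_2)$ is maximal Cohen--Macaulay over $R$, Gorenstein as a hypersurface in $S$, and has non-isolated singularity along $\v(g_1,g_2)$; by \cite{HL02} this forces infinite $\cm$-representation type. One computes $(0:I)_R=(g_3\cdots g_m)R$, and $I+(0:I)$ has vanishing set $\bigcup_{i\le 2,\,j\ge 3}\v(g_i,g_j)$ of dimension $1$, so is not $\m$-primary. Theorem \ref{19}(3) then contradicts finite $\cmp$-representation type of $R$, so $m=2$. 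Writing $x:=g_1$, $y:=g_2$, we have $R\cong S/(xy)$. Applying Theorem \ref{19}(3) once more with $I=xR$ (so $R/I\cong S/(x)$ is a Gorenstein complete domain, $(0:I)=yR$, and $I+(0:I)=(x,y)R$ is not $\m$-primary since $\v(x,y)$ has dimension $1$) rules out infinite $\cm$-representation type for $S/(x)$, and symmetrically for $S/(y)$.

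It remains to prove $S/(x,y)$ is a $1$-dimensional integral domain. The dimension is immediate since $x,y$ is a regular sequence in $S$. Proving the domain property is the main obstacle. I argue by contradiction: assume $(x,y)$ admits two distinct minimal primes $\p_1,\p_2$ in $S$, so that $\sing R=\v(x,y)\cup\{\m\}$ has at least two $1$-dimensional irreducible components meeting only at $\m$. By Theorem \ref{45}, each localization $R_{\bar\p_i}$ has finite $\cm$-representation type and is a $1$-dimensional reduced Gorenstein hypersurface with two minimal primes, hence is of ADE $(A_{2k-1})$ type up to completion. The hard part is to leverage this multi-branch structure, together with the partial normalization $R\hookrightarrow S/(x)\times S/(y)$ of conductor $(x,y)R$, to construct infinitely many non-isomorphic indecomposable MCM $R$-modules failing to be locally free at some $\bar\p_i$, contradicting finite $\cmp$-representation type. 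I expect the construction to reduce, via a generic hyperplane section, to the $1$-dimensional non-isolated case treated by Theorem \ref{46}, yielding the required rigidity on $(x,y)$.
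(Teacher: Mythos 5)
Your treatment of the first three conclusions is essentially sound and broadly parallel to the paper's: you get $d=2$ by exhibiting components of $\sing R$ of dimension $d$ (if $f$ is not squarefree) or $d-1$ (the crossing locus) and invoking $\dim\sing R\le1$, where the paper instead deduces $(\RR_{d-2})$ from Corollary \ref{16}(1) and applies Serre's normality criterion; you rule out three or more irreducible factors and obtain finite $\cm$-representation type of $S/(x)$ and $S/(y)$ via Theorem \ref{19}(3a), exactly as the paper does for the latter. The genuine gap is the step you yourself flag as ``the main obstacle'': showing that $S/(x,y)$ is a domain. You do not prove it --- you only assert that one should be able to ``construct infinitely many non-isomorphic indecomposable MCM $R$-modules'' and that you ``expect'' a reduction via a generic hyperplane section to the one-dimensional case of Theorem \ref{46}. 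No such reduction is available from the results at hand: a hyperplane section $R/(h)$ with $h$ a nonzerodivisor has dimension $\dim R-1$, hence is not maximal Cohen--Macaulay over $R$, so Theorem \ref{19} does not transfer infinite $\cmp$-representation type from it back to $R$; and Proposition \ref{47} goes in the opposite direction (ascent under a Kn\"orrer-type construction), so it cannot be used either. Nothing in your proposal (or in the paper) shows that finite $\cmp$-representation type descends to generic hyperplane sections.

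The paper fills precisely this gap with Theorem \ref{22}, which is an explicit construction rather than a reduction. Assuming $(x,y)$ is neither prime nor $\n$-primary, one chooses, via Lemma \ref{21}, an element $a\in\n$ regular on $S/(x,y)$, and an element $b\in\p\setminus(x,y)$ for a minimal prime $\p$ of $(x,y)$ --- such a $b$ exists exactly because $(x,y)$ is not prime --- and forms the matrix factorizations $\left(\begin{smallmatrix}x&a^nb\\0&-y\end{smallmatrix}\right)$, $\left(\begin{smallmatrix}y&a^nb\\0&-x\end{smallmatrix}\right)$ of $xy$. The cokernels $M_n$ are pairwise nonisomorphic because the ideals $(x,y,a^nb)$ are pairwise distinct ($a$ being regular modulo $(x,y)$), infinitely many of them are indecomposable because a decomposable one would split into cyclic modules, of which there are only finitely many by Lemma \ref{20}, and each lies in $\cmp(R)$ because all entries of the presenting matrix lie in the nonmaximal prime $\p$, so $(M_n)_\p$ has no free summand. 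This yields infinite $\cmp$-representation type, forcing $(x,y)$ to be prime. Without this (or an equivalent) construction, your proof of the final assertion is incomplete.
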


\begin{thm}[Corollaries \ref{55} and \ref{56}]
Let $R$ be a $2$-dimensional non-normal Cohen--Macaulay complete local domain.
Suppose that $R$ has finite $\cmp$-representation type.
Then the integral closure $\overline R$ of $R$ has finite $\cm$-representation type.
If $R$ is Gorenstein, then $R$ is a hypersurface.
\end{thm}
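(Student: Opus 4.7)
I address the two conclusions in sequence; both rest on a careful analysis of how maximal Cohen--Macaulay $\overline{R}$-modules restrict to the subring $R$.

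Since $R$ is a complete local Cohen--Macaulay domain of dimension $2$, the integral closure $\overline{R}$ is a finite $R$-module (by finiteness of normalization for complete local domains) and itself local. As a $2$-dimensional normal Noetherian ring, $\overline{R}$ is Cohen--Macaulay, and as a torsion-free rank-one $R$-module with $\dim\overline{R}=\dim R$ it is maximal Cohen--Macaulay over $R$. A $2$-dimensional Cohen--Macaulay local domain with isolated singularity is normal (Serre's criterion), so non-normality of $R$ forces some height-one prime $\mathfrak{p}$ of $R$ to lie in the non-normal locus $\v(\mathfrak{c})$, where $\mathfrak{c}=(R:_R\overline{R})$; by the earlier Corollary~\ref{16}, $\sing R$ is finite. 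At such a $\mathfrak{p}$, the ring $\overline{R}_\mathfrak{p}$ is the integral closure of the $1$-dimensional non-normal Cohen--Macaulay local domain $R_\mathfrak{p}$, hence a semilocal Dedekind domain (a principal ideal domain) strictly containing $R_\mathfrak{p}$. The minimal number of generators $\mu:=\mu_{R_\mathfrak{p}}(\overline{R}_\mathfrak{p})$ is at least $2$, since otherwise $\overline{R}_\mathfrak{p}$ would be cyclic, and $1\in\overline{R}_\mathfrak{p}$ together with closure under multiplication would collapse it to $R_\mathfrak{p}$.

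For the first conclusion, the key claim is that every nonzero maximal Cohen--Macaulay $\overline{R}$-module $N$ lies in $\cmp(R)$ after restricting to $R$. Indeed, $N_\mathfrak{p}$ is torsion-free over the principal ideal domain $\overline{R}_\mathfrak{p}$, hence free of some positive rank $n=\rank_{\overline{R}}(N)$; were it also $R_\mathfrak{p}$-free, comparison of generic $R_\mathfrak{p}$-ranks would give $N_\mathfrak{p}\cong R_\mathfrak{p}^n$, whereas $\mu_{R_\mathfrak{p}}(N_\mathfrak{p})=n\mu\ge 2n$, a contradiction. Thus the restriction functor $\cm(\overline{R})\to\cm(R)$ carries every nonzero object into $\cmp(R)$. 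Combining this with finite $\cmp$-representation type of $R$ via a Krull--Schmidt bookkeeping argument (indecomposable $R$-summands of $N$ lying in $\cmp(R)$ come from a fixed finite palette; the $\overline{R}$-indecomposability of $N$ and the $\overline{R}$-action constrain the remaining summands and multiplicities) then yields finite $\cm$-representation type of $\overline{R}$. I see making this counting step quantitative -- in particular bounding the number of non-isomorphic $\overline{R}$-module structures compatible with a fixed $R$-module -- as the principal technical obstacle of the first part.

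For the second conclusion, assume $R$ is Gorenstein. By the first conclusion, $\overline{R}$ is a $2$-dimensional complete local normal Cohen--Macaulay domain of finite $\cm$-representation type, hence a quotient singularity by the classification of Auslander/Esnault/Herzog. Gorensteinness of $R$ makes the syzygy functor $\syz$ an autoequivalence of $\lcm(R)$ preserving $\cmp(R)$ (since $(\syz M)_\mathfrak{q}$ is $R_\mathfrak{q}$-free exactly when $M_\mathfrak{q}$ is), so $\syz$ permutes the finitely many indecomposables of $\cmp(R)$, and each of them has a periodic minimal free resolution over $R$. Combining this periodicity with the duality $\Hom_R(\overline{R},R)\cong\mathfrak{c}$ (yielding a short exact sequence $0\to R\to\overline{R}\to\omega_{R/\mathfrak{c}}\to 0$, with $R/\mathfrak{c}$ one-dimensional Cohen--Macaulay by the depth lemma) and the explicit quotient-singularity description of $\overline{R}$ should force $R$ to be a hypersurface via a suitable Eisenbud-type criterion. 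The main obstacle in this part is the final step: upgrading existence of a periodic MCM module to the hypersurface conclusion fails over an arbitrary Gorenstein ring, so the extra structural data -- possibly in combination with the one-dimensional classification of Theorem~\ref{46} applied to $R/\mathfrak{c}$ -- must be brought to bear.
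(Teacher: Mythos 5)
Both halves of your argument stop exactly at the points you flag, and in both cases the missing step is the real content of the paper's proof, so these are genuine gaps rather than routine details. For the first conclusion, your observation that every nonzero $N\in\cm(\overline R)$ lands in $\cmp(R)$ is correct (and your generator-count at a height-one prime of the non-normal locus is a clean, slightly more elementary route to this than the paper's, which instead splits the syzygy sequence of $N$ at $\q$ and uses that $R_\q$ is not a direct summand of $\overline R_\q$). But the "Krull--Schmidt bookkeeping" you defer cannot be carried out by counting alone: in general an indecomposable module over a finite extension can decompose over the subring, and a single $R$-module can carry infinitely many pairwise non-isomorphic module structures over a finite $R$-algebra, so knowing that restrictions are built from a finite palette in $\ind\cmp(R)$ bounds nothing. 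What saves the day is birationality: for a finite birational extension $A\subseteq B$ and a $B$-module $M$ torsion-free over $A$ one has $\End_A(M)=\End_B(M)$ (and likewise for $\Hom$ between two such modules). Hence restriction sends indecomposables to indecomposables and is injective on isomorphism classes, giving directly $\#\ind\cmz(\overline R)\le\#\ind\cmp(R)<\infty$ and then finite $\cm$-representation type of $\overline R$ (finite $\cmz$-type implies finite $\cm$-type since finite $\cmz$-type forces an isolated singularity). This endomorphism-ring identity is the paper's Lemma \ref{31}, and it is the ingredient your proposal lacks; the quotient-singularity classification of $\overline R$ plays no role here.

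For the Gorenstein conclusion, your strategy (periodicity of the finitely many modules in $\ind\cmp(R)$ plus the conductor sequence plus "a suitable Eisenbud-type criterion") does not close, as you acknowledge: periodicity of some MCM modules never forces the hypersurface property by itself. The paper's idea is to transfer complexity all the way down to the residue field. Set $T=\overline R$ and $U=\syz_T^2(T/\m T)$. One checks that every indecomposable $R$-direct summand of $T$ and of $U$ lies in $\ind\cmp(R)$ (again via the fact that $R_\q$ is not a summand of $T_\q$, applied to summands of $U_\q\cong T_\q^{\oplus(\edim T-1)}$); since $\ind\cmp(R)$ is finite and closed under syzygies over the Gorenstein ring $R$, each such summand is eventually periodic, i.e.\ has $\cx_R\le1$ (Lemma \ref{41}). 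Feeding the exact sequence $0\to U\to T^{\oplus b}\to T^{\oplus a}\to T/\m T\to0$ into the subadditivity of complexity along exact sequences gives $\cx_R(T/\m T)\le1$; but $T/\m T$ is a nonzero $k$-vector space, so $\cx_Rk\le1$ and $R$ is a hypersurface by Avramov's theorem. Producing a finite-length module of complexity at most one out of the normalization is the step your outline is missing.
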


The former theorem gives a strong restriction of the structure of a hypersurface of finite $\cmp$-representation type which is not an integral domain.
The latter theorem supports the conjecture that a Gorenstein local ring of finite $\cmp$-representation type is a hypersurface.
Note that, under the assumption of the theorem plus the assumption that $R$ is equicharacteristic zero, the integral closure $\overline R$ is a quotient surface singularity by the theorem of Auslander \cite{Aus} and Esnault \cite{Es}.

\section{Preliminaries}\label{p}

This section is devoted to stating our conventions, and to recalling the definitions of the notions which repeatedly appear in this paper.

\begin{conv}
Throughout this paper, unless otherwise specified, we adopt the following convention.
Rings are commutative and noetherian, and modules are finitely generated.
Subcategories are full and strict (i.e., closed under isomorphism).
Subscripts and superscripts are often omitted unless there is a risk of confusion.
An identity matrix of suitable size is denoted by $E$.
\end{conv}

\begin{dfn}
Let $R$ be a ring.
\begin{enumerate}[(1)]
\item
An $R$-module $M$ is {\em maximal Cohen--Macaulay} if the inequality $\depth M_\p\ge\dim R_\p$ holds for all $\p\in\spec R$.
Hence, by definition, the zero module is maximal Cohen--Macaulay.
\item
We denote by $\mod R$ the category of (finitely generated) $R$-modules, and by $\cm(R)$ the subcategory of $\mod R$ consisting of maximal Cohen--Macaulay $R$-modules.
For a subcategory $\X$ of $\mod R$, we denote by $\ind\X$ the set of isomorphism classes of indecomposable $R$-modules in $\X$, and by $\add_R\X$ the {\em additive closure} of $\X$, that is, the subcategory of $\mod R$ consisting of direct summands of finite direct sums of objects in $\X$.
\item
A subset $S$ of $\spec R$ is called {\em specialization-closed} if $\v(\p)\subseteq S$ for all $\p\in S$.
This is equivalent to saying that $S$ is a union of closed subsets of $\spec R$ in the Zariski topology.
\item
Let $S$ be a subset of $\spec R$.
Then it is easy to see that
$$
\sup\{\dim R/\p\mid\p\in S\}\ge\sup\{n\ge0\mid\text{there exists a chain $\p_0\subsetneq\p_1\subsetneq\cdots\subsetneq\p_n$ in $S$}\},
$$
and the equality holds if $S$ is specialization-closed.
The {\em (Krull) dimension} of a specialization-closed subset $S$ of $\spec R$ is defined as this common number and denoted by $\dim S$.
\item
The {\em singular locus} of $R$, denoted by $\sing R$, is by definition the set of prime ideals $\p$ of $R$ such that $R_\p$ is not a regular local ring.
It is clear that $\sing R$ is a specialization-closed subset of $\spec R$.
If $R$ is excellent, then by definition $\sing R$ is a closed subset of $\spec R$ in the Zariski topology.
\item
For an $m\times n$ matrix $A$ over $R$, we denote by $\cok_RA$ the cokernel of the map $R^{\oplus n}\to R^{\oplus m}$ given by $x\mapsto Ax$, and by $\I_s(A)$ the ideal of $R$ generated by all the $s$-minors of $A$.
\item
For an $R$-module $M$, we denote by $\fitt_r(M)$ is the $r$th {\em Fitting invariant} of $M$, that is, we have $\fitt_r(M)=\I_{m-r}(A)$ if there exists an exact sequence $R^{\oplus n}\xrightarrow{A}R^{\oplus m}\to M\to0$.
\end{enumerate}
\end{dfn}

\begin{dfn}
Let $(R,\m,k)$ be a local ring.
\begin{enumerate}[(1)]
\item
For an $R$-module $M$, we denote by $\nu_R(M)$ the minimal number of generators of $M$, that is, $\nu_R(M)=\dim_k(M\otimes_Rk)$.
\item
Let $M$ an $R$-module and $n\ge0$ an integer.
We denote by $\syz_R^nM$ (or simply $\syz^nM$) the $n$-th {\em syzygy} of $M$, i.e., the image of the $n$-th differential map in the minimal free resolution of $M$.
This is uniquely determined up to isomorphism.
\item
We denote by $\edim R$ the embedding dimension of $R$, and by $\codepth R$ the codepth of $R$, i.e., $\codepth R=\edim R-\depth R$.
We say that $R$ is a {\em hypersurface} if $\codepth R\le1$.
\item
An $R$-module $M$ is called {\em periodic} if $\syz^eM\cong M$ for some $e>0$.
\item
The {\em complexity} of an $R$-module $M$, denoted by $\cx_RM$, is defined as the infimum of nonnegative integers $n$ such that there exists a real number $r$ satisfying the inequality $\beta_i^R(M)\le ri^{n-1}$ for $i\gg 0$, where $\beta_i^R(M)$ stands for the $i$th Betti number of $M$.
\item
The {\em Loewy length} of $R$ is defined by $\ell\ell(R)=\inf\{n\ge0\mid\m^n=0\}$.
Note that $\ell\ell(R)<\infty$ if and only if $R$ is artinian.
\end{enumerate}
\end{dfn}

\begin{dfn}
Let $R$ be a local ring.
\begin{enumerate}[(1)]
\item
For a subcategory $\X$ of $\mod R$ we denote by $[\X]$ the smallest subcategory of $\mod R$ containing $R$ and $\X$ that is closed under finite direct sums, direct summands and syzygies, i.e., $[\X]=\add_R(\{R\}\cup\{\syz^iX\mid i\ge0,\,X\in\X\})$.
When $\X$ consists of a single object $X$, we simply denote it by $[X]$.
\item
For subcategories $\X,\Y$ of $\mod R$ we denote by $\X\circ\Y$ the subcategory of $\mod R$ consisting of objects $M$ which fit into an exact sequence $0 \to X \to M \to Y \to 0$ in $\mod R$ with $X\in\X$ and $Y\in\Y$.
We set $\X\bullet\Y=[[\X]\circ[\Y]]$.
\item
Let $\C$ be a subcategory of $\mod R$.
Put
$$
[\C]_r=
\begin{cases}
\{0\} & (r=0),\\
[\C] & (r=1),\\
{[\C]}_{r-1}\bullet\C=[{[\C]}_{r-1}\circ[\C]] & (r\ge2).
\end{cases}
$$
If $\C$ consists of a single object $C$, then we simply denote it by ${[C]}_r$.
\item
Let $\X$ be a subcategory of $\mod R$.
We define the {\it dimension} of $\X$, denoted by $\dim\X$, as the infimum of the integers $n\ge0$ such that $\X={[G]}_{n+1}$ for some $G\in\X$.
\end{enumerate}
\end{dfn}

\section{Conjectures and questions}\label{c}

In this section, we present several conjectures and questions which we deal with in later sections.
First of all, let us give several definitions of representation types, including that of finite $\cmp$-representation type, which is the main subject of this paper.

\begin{dfn}
Let $R$ be a Cohen--Macaulay ring.
By $\cmz(R)$ we denote the subcategory of $\cm(R)$ consisting of modules that are locally free on the punctured spectrum of $R$, and set
$$
\cmp(R):=\cm(R)\setminus\cmz(R).\footnote{The index $0$ (resp. $+$) in $\cmz(R)$ (resp. $\cmp(R)$) means that it consists of modules whose nonfree loci have zero (resp. positive) dimension.}
$$
For each $\XX\in\{\cm,\cmz,\cmp\}$ we say that $R$ has {\em finite} (resp. {\em countable}) $\XX$-representation type if there exist only finitely (resp. countably) many isomorphism classes of indecomposable modules in $\XX(R)$.
We say that $R$ has {\em infinite} (resp. {\em uncountable}) $\XX$-representation type if $R$ does not have finite (resp. countable) $\XX$-representation type.
Also, $R$ is said to have {\em bounded} $\XX$-representation type if there exists an upper bound of the multiplicities of indecomposable modules in $\XX(R)$, and said to have {\em unbounded} $\XX$-representation type if $R$ does not have bounded $\XX$-representation type.
\end{dfn}

Let $R$ be a complete local hypersurface with uncountable algebraically closed coefficient field of characteristic not two.
Buchweitz, Greuel and Schreyer \cite[Theorem B]{BGS} (see also \cite[Theorem 14.16]{LW}) prove that $R$ has countable $\cm$-representation type if and only if it is either an $(\A_\infty)$-singularity or a $(\D_\infty)$-singularity.
Moreover, when this is the case, they give a complete classification of the indecomposable maximal Cohen--Macaulay $R$-modules.
Using this result, Araya, Iima and Takahashi \cite[Theorem 1.1 and Corollary 1.3]{hsccm} prove the following theorem (see \cite[Proposition 3.5(3)]{dim}), which provides examples of a Cohen--Macaulay local ring of finite $\cmp$-representation type.

\begin{thm}[Araya--Iima--Takahashi]\label{5}
Let $R$ be a complete local hypersurface with uncountable algebraically closed coefficient field of characteristic not two.
If $R$ has countable $\cm$-representation type, then the following statements hold.
\begin{enumerate}[\rm(1)]
\item
The ring $R$ has finite $\cmp$-representation type.
\item
There is an inequality $\dim\cm(R)\le1$.
\end{enumerate}
\end{thm}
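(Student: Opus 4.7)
The plan is to combine the Buchweitz--Greuel--Schreyer classification with Kn\"orrer periodicity to reduce the statement to an explicit computation on the one-dimensional models of the $(\A_\infty)$ and $(\D_\infty)$ series.

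\textbf{Step 1 (Reduction to the two series).} Under the standing hypotheses, the Buchweitz--Greuel--Schreyer classification recalled immediately before the statement implies that $R$ is either an $(\A_\infty)$- or a $(\D_\infty)$-hypersurface. Since any zero-dimensional hypersurface has finite $\cm$-representation type, we may further assume $d:=\dim R\ge 1$.

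\textbf{Step 2 (Reduction to dimension one).} Next I would invoke Kn\"orrer periodicity: the double branched cover construction induces an equivalence between the stable categories of maximal Cohen--Macaulay modules in dimensions differing by two, sending indecomposables to indecomposables and preserving the property of being locally free on the punctured spectrum (the equivalence is compatible with localization at primes pulled back through the added variables). Since the $(\A_\infty)$ and $(\D_\infty)$ series are closed under this operation, it suffices to prove both (1) and (2) for the one-dimensional representatives $R_{\A}:=k[[x,y]]/(x^2)$ and $R_{\D}:=k[[x,y]]/(x^2y)$.

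\textbf{Step 3 (One-dimensional classification, part (1)).} The BGS matrix factorization list gives in each case a countable family of indecomposables. Testing each indecomposable at the finitely many height-one singular primes isolates a finite sublist of modules that fail to be locally free on the punctured spectrum. Over $R_{\A}$ the unique non-maximal prime is $\p=(x)$, and the only indecomposable failing the test is $R_{\A}/(x)$: the remaining $2\times 2$ matrix factorizations carry a power of $y$ as an off-diagonal entry, so after inverting $y$ their cokernels become free over $R_{\A,\p}$, while $(R_{\A}/(x))_\p$ is a proper length-one quotient of the length-two ring $R_{\A,\p}$. Over $R_{\D}$ only the prime $(x)$ needs to be tested, since $R_{\D}$ is regular at $(y)$, and an analogous bookkeeping produces a finite list. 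This yields (1).

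\textbf{Step 4 (Dimension bound).} For (2), take $G$ to be the direct sum of $R$ together with the finitely many indecomposables of $\cmp(R)$ identified in Step 3. Running through the classification list, each remaining indecomposable $M$ fits into a short exact sequence $0\to X\to M\to Y\to 0$ with $X,Y\in [G]$, obtained either from successive matrix-factorization extensions (over $R_{\A}$, e.g.\ relating each ideal $(x,y^n)$ to $(x,y^{n-1})$ and $R_{\A}/(x)$) or from the periodic syzygy structure of the modules in $\cmp(R)$. Hence $\cm(R)=[G]_2$ and $\dim\cm(R)\le 1$, which transports through Kn\"orrer periodicity to arbitrary dimension.

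\textbf{Main obstacle.} The main difficulty lies in the $(\D_\infty)$ half of Steps 3 and 4: the matrix factorizations for $x^2y$ are substantially more intricate than those for $x^2$, so enumerating exactly which indecomposables are not locally free on the punctured spectrum, and constructing the explicit short exact sequences that witness $\cm(R_{\D})=[G]_2$, will require a careful case analysis of the classification. Once these one-dimensional facts are in hand, Kn\"orrer periodicity automatically upgrades them to the higher-dimensional statement.
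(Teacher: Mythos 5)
Your overall strategy (BGS classification, reduction via Kn\"orrer periodicity, explicit bookkeeping on the matrix factorization lists, and the extension argument for $\dim\cm(R)\le1$) is essentially the strategy of the cited source \cite{hsccm}; the paper itself only quotes the result. However, Step 2 contains a genuine gap. Kn\"orrer periodicity is an equivalence between stable categories in dimensions differing by \emph{two}, so the parity of $\dim R$ is an invariant of the Kn\"orrer orbit: the one-dimensional representatives $k[\![x,y]\!]/(x^2)$ and $k[\![x,y]\!]/(x^2y)$ only reach the odd-dimensional members of the $(\A_\infty)$ and $(\D_\infty)$ series. The even-dimensional members, e.g.\ the surfaces $k[\![x,y,z]\!]/(xy)$ and $k[\![x,y,z]\!]/(x^2y+z^2)$, are not Kn\"orrer doubles of anything in the series (formally one would have to take the double of a "zero polynomial" or of an artinian hypersurface of finite $\cm$-type, neither of which works), so they must be treated as independent base cases with their own classification of indecomposables, determination of which ones lie in $\cmp$, and construction of the extensions witnessing $\cm(R)=[G]_2$. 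Your claim that "it suffices to prove both (1) and (2) for the one-dimensional representatives" therefore fails for every even $d\ge2$, which is precisely the range containing the non-normal surface singularities that the rest of the paper cares about.

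A secondary point, fixable but not free: you assert that the Kn\"orrer equivalence preserves the property of being locally free on the punctured spectrum and sends indecomposables to indecomposables compatibly with the $\cmz$/$\cmp$ dichotomy in both directions. This requires an actual verification (compare the paper's Lemma \ref{40}, which proves only one direction, for a one-variable variant, and shows that the image of an indecomposable may split into two pieces). The clean way is to check that the nonfree locus of $\Phi(M)$ is the preimage of $\nf_R(M)$ under $\spec R^\sharp\supseteq\v(u,v)\cong\spec R$, so that $\dim\nf(\Phi M)=\dim\nf(M)$; without some such statement the reduction does not transport assertion (1), and the dimension bound in (2) also needs the observation that $\dim\lcm(-)$ is invariant under the equivalence and equals $\dim\cm(-)$ for hypersurfaces.
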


By definition, there is a strong connection between finite $\cmp$-representation type and finite $\cm$-representation type.
The first assertion of Theorem \ref{5} suggests to us that finite $\cmp$-representation type should also be closely related to countable $\cm$-representation type.
Several conjectures have been presented so far concerning finite/countable $\cm$-representation type, and we set the following proposal.

\begin{proposal}\label{60}
One should consider the conjectures on finite/countable $\cm$-representation type for finite $\cmp$-representation type.
\end{proposal}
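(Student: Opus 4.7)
The statement is programmatic rather than deductive, so what I propose is an outline of how one would systematically carry it out. First I would compile the canonical list of conjectures that have shaped the study of finite and countable $\cm$-representation type: Schreyer's conjecture that a Gorenstein local ring of countable $\cm$-representation type is a hypersurface; the folklore statement that $\dim\cm(R)\le 1$ for a Cohen--Macaulay local ring of countable $\cm$-representation type; the Huneke--Leuschke localization conjecture that countable $\cm$-representation type descends to $R_\p$; and the geometric conjecture bounding $\dim\sing R$ in terms of representation type. For each such conjecture I would formulate its $\cmp$-analogue by replacing ``countable $\cm$'' with ``finite $\cmp$'', and then attempt to settle it in the natural order of increasing dimension.

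Next I would test each analogue against the known constructions: the one-dimensional Gorenstein hypersurfaces $S/(x^2)$ and $S/(x^2y)$ appearing in Theorem~\ref{46}, the two-dimensional non-normal hypersurfaces of Corollary~\ref{69}, and the indecomposable matrix factorizations of the $(\A_\infty)$ and $(\D_\infty)$ singularities classified in~\cite{hsccm}. The natural strategy to \emph{establish} a $\cmp$-version of a $\cm$-conjecture is to exploit Conjecture~\ref{conj11}: an affirmative answer to that conjecture would make finite $\cmp$-representation type force countable $\cm$-representation type, whence every classical restriction would transfer mechanically. Conversely, to \emph{refute} or \emph{weaken} an analogue one would construct Cohen--Macaulay local rings with only finitely many indecomposables in $\cmp(R)$ but pathological behaviour in $\cmz(R)$, and the rings with a controlled non-isolated singular locus supported on a single height-one prime, such as those arising in Section~\ref{n}, are the natural first targets.

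The main obstacle I anticipate is the lack of closure of $\cmp$ under the standard categorical operations. While $\cm(R)$ is closed under syzygies, duals and extensions, the class $\cmp(R)$ is merely the set-theoretic complement of $\cmz(R)$ inside $\cm(R)$, and an Auslander--Reiten sequence emanating from an object of $\cmp(R)$ may well involve objects of $\cmz(R)$. Consequently, Brauer--Thrall-type and covering-theoretic techniques that drive the classical representation theory do not transfer verbatim; each analogue will likely require a hybrid argument that tracks both the non-free locus via Fitting invariants and the module-theoretic data via syzygies and complexity, as is already visible in Sections~\ref{s} and~\ref{n}. I expect the deepest part of the program to be the higher-dimensional hypersurface conjecture, where one must understand how indecomposables in $\cmp(R)$ are generated from lower-dimensional ones via matrix factorizations over non-isolated singularities; this is precisely the phenomenon that already makes the analysis in Section~\ref{h} delicate, and I suspect that settling the $\cmp$-analogue of the Gorenstein-implies-hypersurface conjecture in dimension at least three will require genuinely new ideas beyond the reductions currently available.
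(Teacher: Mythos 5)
This statement is a programmatic proposal rather than a mathematical assertion, so the paper offers no proof of it; your outline correctly identifies it as such and mirrors exactly what the paper does, namely forming the $\cmp$-analogues of Conjectures \ref{1} and \ref{4} and Theorem \ref{7} (yielding Questions \ref{2}, \ref{61} and \ref{62}) and then addressing them in Sections \ref{s}--\ref{h}. Your anticipated obstacles (non-closure of $\cmp(R)$ under categorical operations, the role of Conjecture \ref{conj11}, and the difficulty of the higher-dimensional hypersurface case) are consistent with the paper's actual development, so there is nothing to correct.
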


There has been a folklore conjecture on countable $\cm$-representation type probably since the 1980s.
Recently, this conjecture has been studied by Stone \cite{St}.

\begin{conj}\label{1}
A Gorenstein local ring $R$ of countable $\cm$-representation type is a hypersurface.
\end{conj}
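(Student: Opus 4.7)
The plan is to reduce to the complete case and then induct on $d=\dim R$, splitting on the dimension of the singular locus. Completing in the $\m$-adic topology preserves being Gorenstein, being a hypersurface, and (via Krull--Schmidt and standard descent of indecomposables) countable $\cm$-representation type, so we may write $R=S/J$ with $(S,\n)$ a complete regular local ring; then $R$ being a hypersurface becomes $\nu_S(J)\le 1$. The base case $d=0$ is an artinian Gorenstein local ring where $\cm(R)=\mod R$, so countable $\cm$-representation type is just countable representation type, and classical representation theory of artinian local rings forces $\edim R\le 1$, giving the hypersurface property.

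For $d\ge 1$, the Huneke--Leuschke theorem recalled in Section \ref{s} gives $\dim\sing R\le 1$, and I would split into the isolated and non-isolated cases. Suppose first that $R$ has a non-isolated singularity; pick $\p\in\sing R$ with $\dim R/\p=1$ and choose a general element $f\in\m$ whose image in $\m/\m^2$ avoids the ``bad'' linear subspaces coming from $\p$, the associated primes of $R$, and a minimal generating set of $J$. Then $R/(f)$ is Gorenstein of dimension $d-1$, and the critical step is to argue that $R/(f)$ still has countable $\cm$-representation type. Granted this, the induction gives that $R/(f)$ is a hypersurface, so $J$ modulo $(f)$ is $\n$-principal in $S/(f)$, and Nakayama lifts this to $\nu_S(J)\le 1$. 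The $d=1$ non-isolated subcase is exactly Theorem \ref{46} of the present paper, which is strong evidence that this branch of the induction can be made to work.

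If instead $R$ has an isolated singularity, then by the localization theorem of Huneke--Leuschke and the inductive hypothesis, every proper localization $R_\p$ is a hypersurface. Thus $R$ is a hypersurface on the punctured spectrum, so by Eisenbud's matrix factorization theorem every $M\in\cm(R)$ is eventually $2$-periodic on $\spec R\setminus\{\m\}$. One then wishes to upgrade this local $2$-periodicity of the stable category $\lcm(R)$ to a global one, using countable representation type to rule out the non-periodic behavior that could otherwise occur at $\m$. An alternative, closer in spirit to Herzog's original argument in the finite case, is to show directly that $\codepth R\ge 2$ at an isolated Gorenstein singularity forces uncountably many indecomposable maximal Cohen--Macaulay modules via growth of the Auslander--Reiten quiver or moduli of matrix factorizations.

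The principal obstacle is the descent step ``$R$ of countable $\cm$-representation type implies $R/(f)$ of countable $\cm$-representation type for a generic regular $f$'', since even finite $\cm$-representation type does not descend to hyperplane sections in general; countable cardinality is difficult to control because it is such a soft invariant. A plausible route is to parametrize those indecomposable MCM $R$-modules whose support meets the hyperplane $\v(f)$ by families of matrix factorizations pulled back from $R/(f)$, controlling cardinalities through that parametrization; but making this rigorous seems to require a genuinely new ingredient. The isolated-singularity branch is itself a deep open question overlapping with Schreyer's conjecture, so a complete proof will almost certainly need input well beyond Section \ref{m} or Herzog's classical argument.
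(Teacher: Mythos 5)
The statement you are attempting is not proved in the paper at all: it appears there as Conjecture \ref{1}, an open folklore conjecture. The authors only record that it is known when $R$ has finite $\cm$-representation type (Herzog's theorem, \cite[Theorem (8.15)]{Y}) and when $R$ is a complete intersection with algebraically closed uncountable residue field (\cite[Existence Theorem 7.8]{AI}). So there is no proof in the paper to compare against, and your proposal must be judged as an attempt at an open problem.

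As such, it has genuine gaps beyond the two you flag yourself. First, the descent step ``countable $\cm$-representation type passes from $R$ to a generic hyperplane section $R/(f)$'' is, as you admit, unproven, and no mechanism for it is known; this alone leaves the non-isolated branch of your induction incomplete. Second, the isolated-singularity branch is itself an open conjecture of Huneke--Leuschke (countable $\cm$-representation type plus isolated singularity should imply finite $\cm$-representation type, \cite[Page 3006]{HL03}), so that branch cannot currently be closed either. Third, even your preliminary reductions are problematic: ascent of \emph{countable} $\cm$-representation type to the completion is not known in general (the ascent results in \cite{LW} concern finite and bounded type), and your base case $d=0$ is false as stated. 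Over a countable field $k$, the ring $k[x,y]/(x^2,y^2)$ is artinian, Gorenstein, complete, and not a hypersurface, yet it has only countably many indecomposable modules (its indecomposables are parametrized by strings and by bands over $\mathbb{P}^1(k)\times\mathbb{Z}_{>0}$, a countable set). Thus the conjecture is only plausible under an uncountability hypothesis on $k$ (as in Theorem \ref{7}), and any correct argument must use that hypothesis in an essential way; your sketch nowhere does so.
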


This conjecture holds true if $R$ has finite $\cm$-representation type; see \cite[Theorem (8.15)]{Y}.
Also, the conjecture holds if $R$ is a complete intersection with algebraically closed uncountable residue field; see \cite[Existence Theorem 7.8]{AI}.
The following example shows that the assumption in the conjecture that $R$ is Gorenstein is necessary.

\begin{ex}
Let $S=\CC[\![x,y,z]\!]/(xy)$.
Then $S$ is an $(\a_\infty)$-singularity of dimension $2$, and has countable $\cm$-representation type by \cite[Theorem B]{BGS}.
Let $R$ be the second Veronese subring of $S$, that is, $R=\CC[\![x^2,xy,xz,y^2,yz,z^2]\!]\subseteq S$.
Then $R$ is a Cohen--Macaulay non-Gorenstein local ring of dimension $2$.
We claim that $R$ has countable $\cm$-representation type.
Indeed, let $N_1,N_2,\dots$ be the non-isomorphic indecomposable maximal Cohen--Macaulay $S$-modules.
Let $M$ be an indecomposable maximal Cohen--Macaulay $R$-module.
Then $N=\Hom_R(S,M)$ is a maximal Cohen--Macaulay $S$-module, and one can write $N\cong N_{a_1}^{\oplus b_1}\oplus\cdots\oplus N_{a_t}^{\oplus b_t}$.
Since $R$ is a direct summand of $S$, the module $M$ is a direct summand of $N$, and hence it is a direct summand of $N_{a_i}$ for some $i$.
The claim follows from this.
\end{ex}

Combining Conjecture \ref{1} with Proposal \ref{60} gives rise to the following question.

\begin{ques}\label{2}
Let $R$ be a Gorenstein local ring which is not an isolated singularity.
Suppose that $R$ has finite $\cmp$-representation type.
Then is $R$ a hypersurface?
\end{ques}

Here, the assumption that $R$ is not an isolated singularity is necessary.
Indeed, if $R$ is an isolated singularity, then $\#\ind\cmp(R)=0<\infty$.
We shall give answers to Question \ref{2} in Sections \ref{n} and \ref{h}.

Theorem \ref{5}(2) leads us to the following conjecture.

\begin{conj}\label{4}
Let $R$ be a Cohen--Macaulay local ring $R$ of countable $\cm$-representation type.
Then there is an inequality $\dim\cm(R)\le1$.
\end{conj}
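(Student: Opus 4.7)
The plan is to attack Conjecture \ref{4} by leveraging Proposal \ref{60} together with (a hoped-for non-Gorenstein extension of) Conjecture \ref{conj11} to reduce to showing $\dim\cm(R)\le 1$ in the presence of finite $\cmp$-representation type. First I would reduce to the complete case: completion preserves countable $\cm$-representation type under mild hypotheses, and the inequality $\dim\cm(R)\le 1$ descends along $R\to\widehat R$ by a standard summand-detection argument, provided a generator of $\cm(\widehat R)$ can be chosen of the form $\widehat G$ for some $G\in\cm(R)$.

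Once $R$ is complete with $\cmp(R)$ containing only finitely many indecomposables $P_1,\dots,P_n$, set $G := R\oplus P_1\oplus\cdots\oplus P_n$. Then $[G]_1$ contains $R$, every $P_i$, and all their syzygies, and the goal becomes showing that each $M\in\cmz(R)$ lies in $[G]_2$, i.e., that $M$ is a summand of a module fitting into a short exact sequence $0\to X\to \widetilde M\to Y\to 0$ with $X,Y\in[G]_1$. For the hypersurfaces of types $(\A_\infty)$ and $(\D_\infty)$ this is exactly Theorem \ref{5}(2), proved in \cite{hsccm} via the explicit matrix-factorisation classification of Buchweitz--Greuel--Schreyer. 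In general, my plan is to combine the Huneke--Leuschke bound $\dim\sing R\le 1$ with the localisation result Theorem \ref{45} (in a non-Gorenstein version) so as to reduce to height-one primes of $\sing R$, apply the dimension-one classification Theorem \ref{46} pointwise, and then reassemble the local data through Auslander--Reiten sequences on $\cm(R)$, whose existence is guaranteed by completeness together with isolated-singularity on the regular locus.

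The hard part will be this reassembly step: converting dimension-one local control into a single short exact sequence of $\cm(R)$-modules that realises each $M\in\cmz(R)$ as a summand of an object in $[G]_1\circ[G]_1$. In the matrix-factorisation setting, the construction proceeds by hand from periodicity of syzygies modulo the hypersurface relation, but outside the hypersurface case no such explicit handle is available, and one would instead have to prove that, under countable $\cm$-representation type, all but finitely many almost split sequences in $\cm(R)$ have middle term in $\add G$, so that every indecomposable of $\cmz(R)$ is reached by a single AR extension from $G$. A secondary difficulty is the completion reduction itself: faithful flatness alone is not quite enough to preserve $\dim\cm(-)\le 1$, and one may have to impose additional hypotheses such as excellence or the existence of a canonical module to make the descent go through. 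Combined, these two obstacles explain why Conjecture \ref{4} currently appears beyond reach for general non-hypersurface rings of countable $\cm$-representation type.
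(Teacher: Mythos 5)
The statement you are asked to prove is stated in the paper as Conjecture \ref{4}, and the paper does \emph{not} prove it. The only evidence the paper records is: (i) the conjecture holds when $R$ has \emph{finite} $\cm$-representation type, by \cite[Proposition 3.7(1)]{dim}; and (ii) it holds for complete local hypersurfaces of countable $\cm$-representation type over a suitable field, which is Theorem \ref{5}(2) from \cite{hsccm}. So there is no ``paper's own proof'' to match your argument against, and any purported proof must be judged on its own. Yours is not a proof: it reduces one open conjecture to two other open problems from the same paper. The passage from countable $\cm$-representation type to finite $\cmp$-representation type is precisely the implication $(2)\Rightarrow(1)$ of Conjecture \ref{conj11}, which is open outside the hypersurface case (and you need it in an even stronger, non-Gorenstein form). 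And the step from finite $\cmp$-representation type to $\dim\cm(R)\le1$ is exactly Question \ref{61}, to which the paper gives only partial answers: Theorem \ref{15}(2) yields $\dim\cm(R)\le n-1$ only under the extra hypotheses that there is an ideal $I$ with $\v(I)\subseteq\v(0:I)$, $R/I$ maximal Cohen--Macaulay, and $I^n=0$, which forces $d=1$ and gives the bound $1$ only when $n=2$.

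Beyond this structural circularity, the two steps you flag as ``hard'' are in fact the entire content and neither is carried out. Setting $G=R\oplus P_1\oplus\cdots\oplus P_n$ and asserting that every $M\in\cmz(R)$ lies in ${[G]}_2$ is the whole problem; the paper's proof in the hypersurface case rests on the explicit Buchweitz--Greuel--Schreyer matrix factorizations, and you offer no mechanism replacing them. The proposed Auslander--Reiten reassembly is not available as stated: AR sequences in $\cm(R)$ exist at an indecomposable non-free $M$ essentially when $M$ is locally free on the punctured spectrum, and the claim that ``all but finitely many almost split sequences have middle term in $\add G$'' is unsupported and would not obviously yield membership in ${[G]}_2$ even if true (iterated AR extensions a priori only place $M$ in ${[G]}_r$ for unbounded $r$). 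Finally, the completion reduction is also unjustified: the paper nowhere shows that countable $\cm$-representation type passes to $\widehat R$, nor that $\dim\cm(\widehat R)\le1$ descends to $R$; your own caveat about needing excellence or a canonical module concedes this. In short, the proposal is a research programme whose every essential step is open, not a proof.
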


This conjecture holds true if $R$ has finite $\cm$-representation type; see \cite[Proposition 3.7(1)]{dim}.
Let $R$ be a Gorenstein local ring.
Then the stable category $\lcm(R)$ of $\cm(R)$ is a triangulated category, and one can consider the {\em (Rouquier) dimension} $\dim\lcm(R)$ of $\lcm(R)$; we refer the reader to \cite{R} for the details.
One has $\dim\lcm(R)\le\dim\cm(R)$ with equality if $R$ is a hypersurface; see \cite[Proposition 3.5]{dim}.
There seems to be a folklore conjecture asserting that every (noncommutative) selfinjective algebra $\Lambda$ of tame representation type satisfies the inequality $\dim(\lmod\Lambda)\le1$.
So Conjecture \ref{4} is thought of as a Cohen--Macaulay version of this folklore conjecture.
Combining Conjecture \ref{4} with Proposal \ref{60} leads us to the following question.

\begin{ques}\label{61}
Let $R$ be a Cohen--Macaulay local ring of finite $\cmp$-representation type.
Then does one have $\dim\cm(R)\le1$?
\end{ques}

We shall give partial answers to this question in Section \ref{n}.

Huneke and Leuschke (\cite[Theorem 1.3]{HL03}) prove the following theorem, which solves a conjecture of Schreyer \cite[Conjecture 7.2.3]{Sc} presented in the 1980s.

\begin{thm}[Huneke--Leuschke]\label{7}
Let $(R,\m,k)$ be an excellent Cohen--Macaulay local ring.
Assume that $R$ is complete or $k$ is uncountable.
If $R$ has countable $\cm$-representation type, then $\dim\sing R\le1$.
\end{thm}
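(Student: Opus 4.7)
The plan is to argue the contrapositive: assuming $\dim\sing R\ge 2$, I will produce uncountably many pairwise non-isomorphic indecomposable maximal Cohen--Macaulay $R$-modules. My approach has two phases. First, I localize and reduce to a low-dimensional ring whose singular locus is still positive-dimensional. Second, in that reduced setting, I parameterize an uncountable family of indecomposable MCMs by a large supply of height-one primes in the singular locus.

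For the reduction, I first establish the principle that under the stated hypotheses countable $\cm$-representation type passes to localizations $R_\p$; this is the technical heart, using completeness or an uncountable residue field together with a faithfully flat gonflement descent argument to reflect and preserve indecomposability of MCM modules under $R\to R_\p$. Granted this, a prime $\p\in\sing R$ with $\height\p\ge 2$ (which exists since $\dim\sing R\ge 2$) yields $R_\p$ of dimension $\ge 2$, still with $\sing R_\p$ of positive dimension and still of countable $\cm$-representation type. Choosing $\p$ so as to make $\sing R_\p$ as rich as possible brings the problem into the key case: a two-dimensional Cohen--Macaulay local ring with a one-dimensional singular locus, again complete or having uncountable residue field.

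In this reduced setting, I construct the uncountable family as follows. First, I exhibit uncountably many height-one primes $\{\p_\alpha\}$ in $\sing R$: when $k$ is uncountable, a one-dimensional excellent reduced $k$-algebra contains $|k|$-many closed points, while the complete case reduces to the uncountable-residue-field case by a faithfully flat base change enlarging $k$ and preserving countable representation type. Second, for each $\alpha$ I take an indecomposable direct summand $M_\alpha$ of $\syz_R^2(R/\p_\alpha)$ with $\p_\alpha\in\nf(M_\alpha)$; such a summand exists because $R_{\p_\alpha}$ is singular, so $R/\p_\alpha$ has infinite projective dimension over $R_{\p_\alpha}$. Since $\nf(M_\alpha)\subseteq\v(\p_\alpha)\cap\sing R=\{\p_\alpha,\m\}$ and necessarily contains $\p_\alpha$, the distinct $\p_\alpha$ force the sets $\nf(M_\alpha)$ to be pairwise distinct, so the $M_\alpha$ are pairwise non-isomorphic, contradicting countable $\cm$-representation type.

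The step I expect to be the main obstacle is the localization principle used in the reduction: transferring countability of indecomposable MCM isomorphism classes from $R$ to $R_\p$ requires a careful argument that, from a hypothetical uncountable family over $R_\p$, constructs a corresponding uncountable family over $R$, and this is precisely where completeness or the uncountable residue field is used. The delicate point is that two non-isomorphic $R_\p$-modules could a priori pull back to isomorphic $R$-modules, so rigidity of the pullback must be established via a standard faithfully flat gonflement or Henselian argument; this descent step, rather than the explicit construction of the $M_\alpha$, is what I expect to be the most technically demanding part of the proof.
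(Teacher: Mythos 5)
Your Phase 2 construction is essentially the right one, but the architecture wrapped around it has a genuine gap, concentrated in the localization reduction. First, localizing destroys exactly the hypotheses the theorem needs: $R_\p$ is essentially never complete, so when $R$ is complete with countable residue field you land in a ring satisfying neither ``complete'' nor ``$k$ uncountable,'' and there the prime-counting step genuinely fails (for instance $\mathbb{Q}[x,y]_{(x,y)}$ is a two-dimensional excellent local ring with only countably many primes). Second, the assertion that countable $\cm$-representation type localizes is itself a theorem of Huneke--Leuschke (\cite[Theorem 2.1]{HL03}, proved under the extra hypothesis of a canonical module by the approximation argument that Theorem \ref{45} adapts); it is of comparable depth to the target and cannot be waved through with a ``gonflement descent.'' Third, the ``key case'' you reduce to cannot be exploited as described: in an excellent local ring $\sing R$ is closed, and a closed subset of $\spec R$ of dimension one is a \emph{finite} set (see the proof of Lemma \ref{14}), so a two-dimensional ring with one-dimensional singular locus has no uncountable family of singular height-one primes. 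Relatedly, a one-dimensional local $k$-algebra has exactly one closed point, not $|k|$ of them, and $\dim\sing R\ge2$ produces a prime of \emph{coheight} at least two in $\sing R$, not of height at least two.

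The good news is that the entire reduction is unnecessary: your Phase 2 runs directly in $R$. Assume $\dim\sing R\ge2$ and pick $\q\in\sing R$ with $\dim R/\q\ge2$. The one genuinely needed input is the countable prime avoidance lemma (this is exactly where ``complete or $k$ uncountable'' enters; it is the lemma invoked as \cite[Lemma 2.2]{count} in Corollary \ref{16}(2)): it yields uncountably many primes $\p\in\v(\q)$ with $\dim R/\p=1$, all of which lie in $\sing R$ since $\sing R$ is specialization-closed. For each such $\p$ one has $\v(\p)=\{\p,\m\}$, the module $\syz_R^d(R/\p)$ is maximal Cohen--Macaulay with nonfree locus contained in $\v(\p)$ and containing $\p$ (because $\pd_{R_\p}\kappa(\p)=\infty$), so it has an indecomposable summand $M_\p$ with $\p\in\nf_R(M_\p)\subseteq\{\p,\m\}$; distinct $\p$ then give non-isomorphic $M_\p$, contradicting countable $\cm$-representation type. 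For comparison: the paper does not prove Theorem \ref{7} itself but quotes it from \cite{HL03}, and its own Theorem \ref{12} establishes the stronger $\cmp$-version by the same skeleton, except that it pins down $\p$ as $\ann_R\Tor_1^R(G,G')$ for indecomposable summands $G,G'$ rather than through nonfree loci, which is what allows it to conclude that $\sing R$ is literally a countable set before applying the prime avoidance lemma.
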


Indeed, the assumption that $R$ is excellent is unnecessary; see \cite[Theorem 2.4]{count}.
This result naturally makes us have the following question.

\begin{ques}\label{62}
Let $R$ be a Cohen--Macaulay local ring.
Suppose that $R$ has finite $\cmp$-representation type.
Then does $\sing R$ have dimension at most one?
\end{ques}

We shall give a complete answer to this question in the next Section \ref{s}.
In fact, we can even prove a stronger statement.

\section{The closedness and dimension of the singular locus}\label{s}

In this section, we discuss the structure of the singular locus of a Cohen--Macaulay local ring of finite $\cmp$-representation type.
First, we consider what the finiteness of the singular locus means.

\begin{lem}\label{14}
Let $R$ be a local ring with maximal ideal $\m$.
The following are equivalent.
\begin{enumerate}[\rm(1)]
\item
$\sing R$ is a finite set.
\item
$\sing R$ is a closed subset of $\spec R$ in the Zariski topology, and has dimension at most one.
\end{enumerate}
\end{lem}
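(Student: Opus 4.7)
The lemma is a purely topological/dimensional statement, and the plan is to prove the two implications essentially by hand, using only that $\sing R$ is specialization-closed and that the prime spectrum of a noetherian ring is well-behaved under dimension bounds.

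For the implication $(2)\Rightarrow(1)$, I would write $\sing R=\v(I)$ for some ideal $I$ with $\dim R/I\le 1$. Since $R$ is noetherian, $I$ has only finitely many minimal primes, say $\q_1,\dots,\q_s$. Any prime $\p\in\v(I)$ either equals some $\q_j$, or strictly contains one, in which case $\height(\p/\q_j)\ge 1$; combined with $\dim R/\q_j\le 1$ this forces $\p=\m$. Hence $\v(I)\subseteq\{\q_1,\dots,\q_s,\m\}$, a finite set.

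For the implication $(1)\Rightarrow(2)$, suppose $\sing R=\{\p_1,\dots,\p_n\}$. Because $\sing R$ is specialization-closed (as noted in the preliminary section), $\v(\p_i)\subseteq\sing R$ for each $i$, so each $\v(\p_i)$ is finite. The key ingredient is the classical fact (a consequence of Krull's principal ideal theorem) that in a noetherian ring, whenever $\p\subsetneq\q$ are primes with $\height(\q/\p)\ge 2$, there are infinitely many primes strictly between $\p$ and $\q$. Applied to each $\p_i$, finiteness of $\v(\p_i)$ forces $\dim R/\p_i\le 1$, hence $\dim\sing R\le 1$. Finally, the containments $\v(\p_i)\subseteq\sing R$ give $\bigcup_{i=1}^{n}\v(\p_i)\subseteq\sing R$, and the reverse inclusion is trivial since $\p_i\in\v(\p_i)$, so $\sing R=\bigcup_{i=1}^{n}\v(\p_i)$ is Zariski-closed.

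The proof is essentially routine; the only non-trivial ingredient is the statement that a chain of primes of length at least two in a noetherian ring admits infinitely many intermediate refinements. I would invoke this as a well-known result (citing a standard commutative algebra text such as Matsumura) rather than prove it. Thus the main obstacle is not mathematical depth but making sure the specialization-closedness of $\sing R$ is used correctly at both steps: in $(2)\Rightarrow(1)$ it is automatic from the $\v(I)$ description, while in $(1)\Rightarrow(2)$ it is what lets us pass from finiteness of the set $\sing R$ to finiteness of each $\v(\p_i)$.
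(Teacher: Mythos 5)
Your proof is correct and follows essentially the same route as the paper: both directions use the decomposition $\sing R=\bigcup_i\v(\p_i)$ together with the fact that a specialization-closed set of dimension $\ge 2$ must contain infinitely many primes. The only cosmetic difference is that you cite the standard ``infinitely many intermediate primes'' fact, whereas the paper derives its version of it (a local ring of dimension at least two has infinitely many height-one primes) directly from Krull's principal ideal theorem and prime avoidance.
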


\begin{proof}
(2)$\implies$(1):
We find an ideal $I$ of $R$ such that $\sing R=\v(I)$.
As $\sing R$ has dimension at most one, so does the local ring $R/I$.
Hence $\spec R/I=\Min R/I\cup\{\m/I\}$, and this is a finite set.

(1)$\implies$(2):
Write $\sing R=\{\p_1,\dots,\p_n\}$.
As $\sing R$ is specialization-closed, it coincides with the finite union $\v(\p_1)\cup\cdots\cup\v(\p_n)$ of closed subsets of $\spec R$.
Hence $\sing R$ is closed.

To show the other assertion, we claim (or recall) that a local ring $R$ of dimension at least two possesses infinitely many prime ideals of height one.
Indeed, for any $x\in\m$ we have $\height(x)\le1$ by Krull's principal ideal theorem, that is, $(x)$ is contained in some prime ideal $\p$ with $\height\p\le1$.
This argument shows that $\m=\bigcup_{\p\in\spec R,\,\height\p\le1}\p$.
Now suppose that there exist only finitely many prime ideals of $R$ having height one.
Then, since the number of the minimal primes is finite, so is the number of prime ideals of height at most one.
Therefore the above union is finite, and by prime avoidance $\m$ is contained in some $\p\in\spec R$ with $\height\p\le1$.
This implies $\dim R\le1$, which is a contradiction.
Thus the claim follows.

Now, assume that $\sing R$ has dimension at least $2$.
Then $\dim R/\p\ge2$ for some $\p\in\sing R$.
The above claim shows that the ring $R/\p$ has infinitely many prime ideals of height one, which have the form $\q/\p$ with $\q\in\v(\p)$.
Then $\q$ is also in $\sing R$, and hence $\sing R$ contains infinitely many prime ideals.
This contradiction shows that the dimension of $\sing R$ is at most $1$.
\end{proof}

The following theorem clarifies a close relationship between finite/countable $\cmp$-representation type and finiteness/countablity of the singular locus.

\begin{thm}\label{12}
If $R$ is a Cohen--Macaulay local ring of finite (resp. countable) $\cmp$-representation type, then $\sing R$ is a finite (resp. countable) set.
\end{thm}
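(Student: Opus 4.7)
The plan is to associate, to each prime $\p \in \sing R \setminus \{\m\}$, an indecomposable module $N_\p \in \ind\cmp(R)$ such that $\p$ is a \emph{minimal} prime of the non-free locus
\[
\sing N_\p:=\{\q\in\spec R\mid (N_\p)_\q\text{ is not }R_\q\text{-free}\}.
\]
Since $\sing N_\p$ is a closed subset of $\spec R$ (the non-free locus of any finitely presented module over a noetherian ring is closed), it has only finitely many minimal primes, so the assignment $\p\mapsto N_\p$ has finite fibres. When $\ind\cmp(R)$ is finite (resp.\ countable) this forces $\sing R\setminus\{\m\}$, and hence $\sing R$, to be finite (resp.\ countable).

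For the construction, set $d=\dim R$ and $M_\p=\syz_R^d(R/\p)$. Being a $d$-th syzygy over a Cohen--Macaulay ring of dimension $d$, the module $M_\p$ is maximal Cohen--Macaulay. If $\q$ is a prime with $\p\not\subseteq\q$, then $(R/\p)_\q=0$, so $(M_\p)_\q$ is a $d$-th syzygy of the zero module over the local ring $R_\q$; being stably free over a local ring, it is free. This shows $\sing M_\p\subseteq\v(\p)$. On the other hand, $(M_\p)_\p$ differs from $\syz_{R_\p}^d(k(\p))$ only by free summands, and the latter is non-free because $R_\p$ is not regular, forcing $k(\p)$ to have infinite projective dimension over $R_\p$. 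Hence $\p\in\sing M_\p$.

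Decomposing $M_\p$ into indecomposable MCM summands, at least one summand $N_\p$ must satisfy $\p\in\sing N_\p$. Then the chain $\sing N_\p\subseteq\sing M_\p\subseteq\v(\p)$ together with $\p\in\sing N_\p$ forces $\p$ to be a minimal prime of the closed set $\sing N_\p$, and since $\p\ne\m$ the module $N_\p$ fails to be locally free on the punctured spectrum, so $N_\p\in\ind\cmp(R)$. For any fixed $N\in\ind\cmp(R)$, the primes $\p$ with $N_\p\cong N$ all occur among the finitely many minimal primes of $\sing N$, yielding the claimed finite-to-one correspondence.

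The technical facts needed are closedness of the non-free locus of a finitely presented module and the stably-free-is-free step over a local ring; I expect neither to be a real obstacle. The conceptual crux is the choice of $\p$ as a \emph{minimal} prime of $\sing N_\p$: without this minimality, countably many indecomposables could in principle cover uncountably many primes inside their non-free loci, and the counting argument would collapse.
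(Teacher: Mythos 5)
Your proof is correct, and its skeleton coincides with the paper's: both start from $C=\syz_R^d(R/\p)$ for a nonmaximal singular prime $\p$, observe that $C$ lies in $\cmp(R)$, decompose it into indecomposables, and recover $\p$ from the summands, so that finitely (resp.\ countably) many members of $\ind\cmp(R)$ can only account for finitely (resp.\ countably) many such primes. The difference lies in the device used to recover $\p$. The paper proves $\p=\ann_R\Tor_1^R(C,C)$, expands this annihilator as a finite intersection of annihilators $\ann_R\Tor_1^R(G_i,G_j)$ over pairs of summands, and uses the irreducibility of a prime ideal to conclude that $\p=\ann_R\Tor_1^R(G_{l_p},G_{l_q})$ for a single pair, which bounds the number of nonmaximal singular primes by the number of such pairs. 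You instead recover $\p$ as the unique minimal prime of the nonfree locus of a single indecomposable summand $N_\p$, via $\nf_R(N_\p)\subseteq\nf_R(M_\p)\subseteq\v(\p)$ together with $\p\in\nf_R(N_\p)$. This is if anything cleaner: since every prime in $\nf_R(N_\p)$ contains $\p$ and $\p$ itself lies there, $\p$ is the \emph{least} element of $\nf_R(N_\p)$ and is therefore determined by the isomorphism class of $N_\p$, so the assignment $\p\mapsto N_\p$ is in fact injective and you do not even need the closedness of the nonfree locus or the count of its minimal components. The supporting facts you invoke (the localization at $\q\not\supseteq\p$ of a free resolution of $R/\p$ is contractible, so the localized syzygy is free; $(M_\p)_\p$ is stably isomorphic to $\syz_{R_\p}^d(\kappa(\p))$, which is nonfree because $R_\p$ is singular) are exactly the ones the paper uses for the same purpose, and the countable case goes through verbatim, as you note.
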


\begin{proof}
First, let us consider the case where $R$ has finite $\cmp$-representation type.
Write $\ind\cmp(R)=\{G_1,\dots,G_t\}$, and pick $\p\in\sing R\setminus\{\m\}$.
Set $C=\syz_R^d(R/\p)$.
We claim that $\p=\ann_R\Tor_1^R(C,C)$.
Indeed, $\Tor_1^R(C,C)$ is isomorphic to $T:=\Tor_{1+2d}^R(R/\p,R/\p)$, which is killed by $\p$.
Hence $\p$ is contained in the annihilator.
Also, $T_\p$ is isomorphic to $\Tor_{1+2d}^{R_\p}(\kappa(\p),\kappa(\p))$, which does not vanish as $\p$ belongs to the singular locus.
Hence $\p$ is in the support of $T$, and contains the annihilator.
Now the claim follows.

Note that $C_\p$ is stably isomorphic to $\syz^d_{R_\p}(\kappa(\p))$, which is not $R_\p$-free since $R_\p$ is singular.
This means that $C$ belongs to $\cmp(R)$, and we get an isomorphism $C\cong G_{l_1}^{\oplus a_1}\oplus\cdots\oplus G_{l_s}^{\oplus a_s}\oplus H$ with $s\ge1$ and $1\le l_1<\cdots<l_s\le t$ and $a_1,\dots,a_s\ge1$ and $H\in\cmz(R)$.
It is easy to see that
$$
\textstyle\p=\left(\bigcap_{1\le i,j\le s}\ann_R\Tor_1^R(G_{l_i},G_{l_j})\right)\cap\ann_R\Tor_1^R(H,M)
$$
for some $R$-module $M$.
Since a prime ideal is irreducible in general, $\p$ coincides with one of the annihilators in the right-hand side.
The module $H$ is locally free on the punctured spectrum, and $\ann_R\Tor_1^R(H,M)$ contains a power of $\m$.
As $\p$ is a nonmaximal prime ideal, it cannot coincide with $\ann_R\Tor_1^R(H,M)$.
We thus have $\p=\ann_R\Tor_1^R(G_{l_p},G_{l_q})$ for some $p,q$.
This shows that we have only finitely many such prime ideals $\p$.
Consequently, $\sing R\setminus\{\m\}$ is a finite set, and so is $\sing R$.

We can analogously deal with the case where $R$ has countable $\cmp$-representation type.
In this case, we can write $\ind\cmp(R)=\{G_1,G_2,G_3,\dots\}$, and for each $\p\in\sing R\setminus\{\m\}$ there exist $p,q$ such that $\p=\ann_R\Tor_1^R(G_{l_p},G_{l_q})$.
\end{proof}

Theorem \ref{12} yields the following corollary, which gives a complete answer to Question \ref{62}.
We should remark that the second assertion of the corollary highly refines Theorem \ref{7} due to Huneke and Leuschke.

\begin{cor}\label{16}
Let $R$ be a Cohen--Macaulay local ring.
\begin{enumerate}[\rm(1)]
\item
If $R$ has finite $\cmp$-representation type, then $\sing R$ is closed and has dimension at most one.
\item
Suppose that $R$ has countable $\cmp$-representation type.
\begin{enumerate}[\rm(a)]
\item
If $k$ is uncountable, then $\sing R$ has dimension at most one.
\item
If $R$ is complete, then $\sing R$ is closed and has dimension at most one.
\end{enumerate}
\end{enumerate}
\end{cor}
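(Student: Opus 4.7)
The proof splits cleanly along the statement. Part (1) follows immediately by combining Theorem \ref{12}, which supplies that $\sing R$ is a finite set, with Lemma \ref{14}, which converts this finiteness into the combined assertion that $\sing R$ is closed in the Zariski topology and has dimension at most one.

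For part (2), Theorem \ref{12} again does the essential work by giving that $\sing R$ is at most countable, so in each of (a) and (b) the remaining task is to upgrade countability to the dimension bound $\dim\sing R\le 1$. My strategy in both cases is the same: assume for contradiction that $\dim\sing R\ge 2$; then by specialization-closedness of $\sing R$ one can choose a prime $\p\in\sing R$ with $\dim R/\p\ge 2$, and the inclusion $\v(\p)\subseteq\sing R$ makes it enough to produce uncountably many primes of $R$ containing $\p$.

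For (2)(a), I plan to run the classical ``$x-\lambda y$'' argument inside $\bar R:=R/\p$, a local ring of dimension $\ge 2$ with residue field $k$. By prime avoidance applied first to the minimal primes of $\bar R$ and then to those of $\bar R/(x)$, I choose $x,y$ in the maximal ideal of $\bar R$ with $\height(x,y)\ge 2$. For each $\lambda\in k$, Krull's principal ideal theorem yields a minimal prime $\p_\lambda$ of $(x-\lambda y)$ of height at most one; if $\p_\lambda=\p_\mu$ for $\lambda\ne\mu$, then $(\lambda-\mu)y\in\p_\lambda$, and since $\lambda-\mu$ is a unit, $y\in\p_\lambda$, hence $x\in\p_\lambda$, so $(x,y)\subseteq\p_\lambda$, contradicting the height bound. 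Thus the $\p_\lambda$ are pairwise distinct, and uncountability of $k$ produces uncountably many primes in $\v(\p)\subseteq\sing R$, contradicting countability.

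For (2)(b), closedness of $\sing R$ comes for free because $R$ is complete and hence excellent. For the dimension bound, the reduction above leaves me facing a complete local \emph{domain} $D:=R/\p$ of dimension at least two, and I need to exhibit uncountably many primes in $D$. My plan is to invoke Cohen's structure theorem together with analytic Noether normalization to write $D$ as module-finite over a complete regular local ring $A$ of the same dimension, of the form $k[[y_1,\dots,y_d]]$ (equicharacteristic) or $V[[y_2,\dots,y_d]]$ (mixed characteristic, with $V$ a complete DVR with residue field $k$); by lying-over the induced map $\spec D\to\spec A$ is surjective, so it suffices to show $A$ has uncountably many primes. For this I plan to consider the height-one primes $(y_1-f)$ as $f$ ranges over an uncountable subring ($k[[y_2]]$ in the equicharacteristic case, and $V$ or $V[[y_3]]$ in mixed characteristic), and verify their pairwise distinctness by identifying the quotient $A/(y_1-f)$ with the appropriate residual power-series ring. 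The only genuinely nontrivial step of the whole argument is this cardinality input for complete local domains of dimension at least two in (2)(b); the rest is formal manipulation of specialization-closed subsets and direct appeals to Theorem \ref{12}.
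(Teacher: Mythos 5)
Your proof is correct, and part (1) is exactly the paper's argument (Theorem \ref{12} plus Lemma \ref{14}). Where you diverge is part (2): the paper disposes of the dimension bound in one line by citing an external result (\cite[Lemma 2.2]{count}, which states that a countable specialization-closed subset $S$ of $\spec R$ satisfies $\dim R/\p\le1$ for all $\p\in S$ when $R$ is complete or $k$ is uncountable), whereas you reprove the needed cardinality facts from scratch. Your argument for (2)(a) is the classical ``$x-\lambda y$'' prime-avoidance construction, which is essentially what underlies the cited lemma anyway; just note explicitly that you must lift the elements $\lambda$ of $k$ to a set of representatives in $\bar R$ so that differences of distinct representatives are units. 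For (2)(b) your route via Cohen's structure theorem and analytic Noether normalization is a genuine alternative to the argument usually given in the literature, which instead invokes countable prime avoidance in a complete local ring (every element of $\m$ lies in a prime of height at most one, so if there were only countably many primes, $\m$ would be contained in one of them, forcing $\dim R\le1$); your version costs a heavier structural input but is equally valid and arguably more concrete. Two small points to tighten: in the mixed-characteristic case the element $f$ must be taken in the maximal ideal of $V$ (otherwise $y_2-f$ is a unit of $V[[y_2,\dots,y_d]]$ and generates the unit ideal), and in the equicharacteristic case one similarly restricts to $f\in y_2k[[y_2]]$, which is still uncountable since $k[[y_2]]$ is uncountable even for finite $k$. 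With those adjustments the proof is complete and self-contained, which the paper's is not.
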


\begin{proof}
(1) The assertion follows from Theorem \ref{12} and Lemma \ref{14}.

(2) Theorem \ref{12} implies that $\sing R$ is a countable set.
Note that $\sing R$ is specialization-closed.
If $R$ is complete or $k$ is uncountable, then we can apply \cite[Lemma 2.2]{count} to deduce that $\dim R/\p\le1$ for all $\p\in\sing R$.
In case $R$ is complete, $\sing R$ is closed as well since $R$ is excellent.
\end{proof}

Next we investigate the relationship of finite $\cmp$-representation type with localization of the base ring at a prime ideal.  In particular, we prove the following theorem, which says that finite $\cmp$-representation type implies finite $\cm$-representation type on the punctured spectrum.
This especially shows that finite $\cmp$-representation type localizes, which should be compared with the result of Huneke and Leuschke \cite[Theorem 2.1]{HL03} asserting that countable $\cm$-representation type localizes under the same assumption as in this theorem.
This is also connected with the conjecture that a Cohen--Macaulay local ring with an isolated singularity having countable $\cm$-representation type has finite $\cm$-representation type \cite[Page 3006]{HL03}.

\begin{thm}\label{45}
Let $(R,\m)$ be a Cohen--Macaulay local ring with a canonical module $\omega$.
Suppose that $R$ has finite $\cmp$-representation type.
Then $R_\p$ has finite $\cm$-representation type for all $\p\in\spec R\setminus\{\m\}$.
\end{thm}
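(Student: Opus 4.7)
The plan is as follows. By Theorem \ref{12}, the singular locus $\sing R$ is finite, so for $\p \in \spec R \setminus \{\m\}$ with $R_\p$ regular every MCM $R_\p$-module is free and the conclusion is trivial. It therefore suffices to treat the finitely many primes $\p \in \sing R \setminus \{\m\}$.

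The heart of the argument is a lifting principle: for every maximal Cohen--Macaulay $R_\p$-module $N$, there should exist a maximal Cohen--Macaulay $R$-module $M$ such that $N$ appears as a direct summand of $M_\p$. Granted this, the proof concludes quickly. For $N$ indecomposable and nonfree, the module $M$ fails to be locally free on $\spec R\setminus\{\m\}$ (since $\p \neq \m$ and $N$ is nonfree), so $M \in \cmp(R)$. Decomposing $M$ into indecomposables, some indecomposable summand $M_i \in \cmp(R)$ has $(M_i)_\p$ containing $N$ as a summand. By hypothesis there are only finitely many such indecomposable $M_i$, and each $(M_i)_\p$ has finitely many indecomposable summands, bounding the isomorphism classes of indecomposable MCM $R_\p$-modules.

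To prove the lifting principle, I plan to exploit the canonical module $\omega$ in a dualization argument. Given $N$ MCM over $R_\p$, set $N^\dagger = \Hom_{R_\p}(N,\omega_\p)$, which is again MCM over $R_\p$. Take a free presentation $R_\p^b \to R_\p^a \to N^\dagger \to 0$, lift its matrix to $R$ by clearing denominators, and form the cokernel $L$, so that $L_\p \cong N^\dagger$. Replacing $L$ by $\syz_R^d L$ with $d = \dim R$ produces an MCM $R$-module, and $M = \Hom_R(\syz_R^d L, \omega)$ is then MCM (duals of MCM modules are MCM since $R$ admits a canonical module). A computation of $M_\p$ via the reflexivity isomorphism $X \cong X^{\dagger\dagger}$ on $\cm(R_\p)$, together with a canonical-module coresolution of $N$, delivers the desired direct-summand decomposition, after possibly replacing $N$ by $\syz_{R_\p}^d N$ at the outset to invert the shift introduced by the syzygy step.

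The main obstacle is making the lifting principle deliver $N$ itself, rather than a shifted version, on localization at $\p$. In the Gorenstein case $\omega_\p = R_\p$, syzygy is an autoequivalence of $\lcm(R_\p)$, and the construction cleanly yields $M_\p \cong N \oplus R_\p^{\oplus a}$ for some $a \ge 0$; in the general Cohen--Macaulay setting one must work with $\omega_\p$-cosyzygies and carefully track summands isomorphic to $\omega_\p$ that the dualization introduces. The ``in particular'' clause of the theorem is then immediate: at $\p = \m$ it is the hypothesis, and at $\p \neq \m$ finite $\cm$-representation type trivially implies finite $\cmp$-representation type.
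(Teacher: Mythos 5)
Your global strategy is the same as the paper's: realize each indecomposable nonfree maximal Cohen--Macaulay $R_\p$-module $N$ as a direct summand of $M_\p$ for some $M\in\cmp(R)$, then count. The problem is that your proof of the lifting principle does not close in the generality of the theorem, and the obstacle you flag at the end is exactly where it fails. Your construction sends $L$ (with $L_\p\cong N^\dagger$) to $M=\Hom_R(\syz_R^dL,\omega)$, and localizing gives $M_\p\cong\bigl(\syz_{R_\p}^d(N^\dagger)\bigr)^\dagger\oplus\omega_\p^{\oplus c}$; dualizing the truncated free resolution of $N^\dagger$ shows this first summand is the $d$-th $\omega_\p$-\emph{cosyzygy} of $N$, not $N$. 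Replacing $N$ by $\syz_{R_\p}^dN$ at the outset does not invert this shift: you would obtain $\bigl(\syz_{R_\p}^d((\syz_{R_\p}^dN)^\dagger)\bigr)^\dagger$, and outside the Gorenstein case the operations $\syz_{R_\p}^d$ and the $\omega_\p$-cosyzygy are not mutually inverse, even up to free or $\omega_\p$-summands. Worse, the modules your construction can reach at $\p$ are precisely the $\dagger$-duals of $d$-th syzygies, and over a non-Gorenstein $R_\p$ there is no reason that $N^\dagger$ is a $d$-th syzygy for $d>\dim R_\p$ (already being a first syzygy forces torsionlessness, which an arbitrary maximal Cohen--Macaulay module need not satisfy). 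So the lifting principle is unproved except when $R_\p$ is Gorenstein.

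The paper avoids all of this by taking a \emph{maximal Cohen--Macaulay approximation} of $L$ in the sense of Auslander--Buchweitz: a short exact sequence $0\to Y\to X\to L\to0$ with $X$ maximal Cohen--Macaulay and $Y$ of finite injective dimension. Localizing at $\p$, where $L_\p\cong N$ is already maximal Cohen--Macaulay, forces $Y_\p$ to be maximal Cohen--Macaulay of finite injective dimension, hence $Y_\p\cong\omega_\p^{\oplus n}$, and the sequence splits because $\Ext_{R_\p}^1(N,\omega_\p)=0$; thus $X_\p\cong N\oplus\omega_\p^{\oplus n}$ with no shift to undo. If you want to keep your construction, you should either invoke the Auslander--Buchweitz theorem directly in place of the syzygy--dualize step, or restrict to the Gorenstein case. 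A secondary, fixable point: your final count asserts that each $(M_i)_\p$ has only finitely many indecomposable direct summands up to isomorphism; since $R_\p$ need not be henselian this requires an argument (pass to the completion and use Krull--Schmidt there, together with the fact that finitely generated modules over a local ring with isomorphic completions are isomorphic). The paper instead shows directly, via cancellation of $\omega_\p$-summands, that distinct $N_i$ yield distinct indecomposables in $\cmp(R)$.
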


\begin{proof}
Assume that there exists a prime ideal $\p\ne\m$ such that $R_\p$ has infinite $\cm$-representation type.
Then the set $\ind\cm(R_\p)\setminus\{\omega_\p\}$ is infinite, and we can take an infinite subset $\N=\{N_1,N_2,N_3,\dots\}$.

Fix a module $N\in\N$.
Then we can choose an $R$-module $L$ such that $N\cong L_\p$.
Take a {\em maximal Cohen--Macaulay approximation} of $L$ over $R$, that is, a short exact sequence
$$
\sigma:0 \to Y \to X \to L \to 0
$$
of $R$-modules such that $X$ is maximal Cohen--Macaulay and $Y$ has finite injective dimension; see \cite[Theorem 1.1]{AB}.
Localization gives an exact sequence $\sigma_\p:0\to Y_\p\to X_\p\to N\to0$.
As $N$ is maximal Cohen--Macaulay, $Y_\p$ is a maximal Cohen--Macaulay $R_\p$-module of finite injective dimension.
It follows from \cite[Exercise 3.3.28(a)]{BH} that $Y_\p\cong\omega_\p^{\oplus n}$ for some $n\ge0$.
The exact sequence $\sigma_\p$ splits, and we get an isomorphism $X_\p\cong N\oplus\omega_\p^{\oplus n}$.
Note that $\omega_\p$ is an indecomposable $R_\p$-module.

Let $X=X_1\oplus\cdots\oplus X_m$ be a decomposition of $X$ into indecomposable $R$-modules.
Then there is an isomorphism $(X_1)_\p\oplus\cdots\oplus(X_m)_\p\cong N\oplus\omega_\p^{\oplus n}$.
For each $i$ write $(X_i)_\p=Z_i\oplus\omega_\p^{\oplus l_i}$ with $l_i\ge0$ an integer and $Z_i$ not containing $\omega_\p$ as a direct summand; then $Z_i$ is a maximal Cohen--Macaulay $R_\p$-module.
We get an isomorphism
$$
Z_1\oplus\cdots\oplus Z_m\oplus\omega_\p^{\oplus(l_1+\cdots+l_m)}\cong N\oplus\omega_\p^{\oplus n}.
$$
Since $\End_{R_{\p}}(\omega_{\p}) \cong R_{\p}$ is a local ring, the module $Z_1\oplus\cdots\oplus Z_m$ does not contain $\omega_\p$ as a direct summand by \cite[Lemma 1.2]{LW}, while $N$ is an indecomposable $R_\p$-module with $N\ncong\omega_\p$.
Further, \cite[Lemma 2.1]{LW} also implies that $Z_1\oplus\cdots\oplus Z_m\cong N$ and $l_1+\cdots+l_m=n$, so we may assume that $Z_1\cong N$ and $Z_2=\cdots=Z_m=0$.
We thus have that $(X_1)_\p\cong N\oplus\omega_\p^{\oplus l_1}$.

Suppose that $(X_1)_\p$ is $R_\p$-free.
Then so are $N$ and $\omega_\p$, and we have $N\cong R_\p\cong\omega_\p$, which contradicts the choice of $N$.
Hence $(X_1)_\p$ is not $R_\p$-free, which implies that $X_1\in\cmp(R)$.

Thus we have shown that for each integer $i\ge1$ there exist an integer $n_i\ge0$ and a module $C_i\in\ind\cmp(R)$ such that $(C_i)_\p\cong N_i\oplus\omega_\p^{\oplus n_i}$.
Assume that $C_i\cong C_j$ for some $i\ne j$.
Then $N_i\oplus\omega_\p^{\oplus n_i}\cong N_j\oplus\omega_\p^{\oplus n_j}$, and, appealing again to \cite[Lemma 1.2]{LW}, we see that $N_i\cong N_j$ (and $n_i=n_j$), contrary to the choice of $\N$.
Hence $C_i\ncong C_j$ for all $i\ne j$, and we conclude that $R$ has infinite $\cmp$-representation type.
This contradiction completes the proof of the theorem.
\end{proof}

\begin{rem}
In Corollary \ref{16}(1) we proved that the singular locus of a Cohen--Macaulay local ring of finite $\cmp$-representation type has dimension at most one.
As an application of Theorem \ref{45}, we can get another proof of this statement under the assumption that $R$ admits a canonical module.

Let $R$ be a $d$-dimensional Cohen--Macaulay local ring with a canonical module, and suppose that $R$ has finite $\cmp$-representation type.
Then $R_\p$ has finite $\cm$-representation type for all nonmaximal prime ideals $\p$ of $R$ by Theorem \ref{45}.
In particular, $R_\p$ has an isolated singularity for all such $\p$ by \cite[Corollary 2]{HL02}.
This implies that $R_\q$ is a regular local ring in codimension $d-2$, and therefore $\dim\sing R\le 1$.
\end{rem}

\section{Necessary conditions for finite CM$_+$-representation type}\label{n}

In this section, we explore necessary conditions for a Cohen--Macaulay local ring to have finite $\cmp$-representation type.
For this purpose we begin with stating and showing a couple of lemmas.

\begin{lem}\label{9}
Let $R$ be a local ring.
\begin{enumerate}[\rm(1)]
\item
The subcategory of $\mod R$ consisting of periodic modules is closed under finite direct sums: if the $R$-modules $M_1,\dots,M_n$ are periodic, then so is $M_1\oplus\cdots\oplus M_n$.
\item
Let $0\to M_1\to \cdots\to M_n \to 0$ be an exact sequence in $\mod R$.
Let $r\ge0$ and $1\le t\le n$ be integers.
If $\cx_R(M_i)\le r$ for all $1\le i\le n$ with $i\ne t$, then $\cx_R(M_t)\le r$.
\end{enumerate}
\end{lem}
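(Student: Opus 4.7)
My plan for (1) is to exhibit a common period for the summands. If $\syz^{e_i} M_i \cong M_i$ for each $i$, then iterating this isomorphism yields $\syz^{m e_i} M_i \cong M_i$ for every positive integer $m$, so with $e = e_1 e_2 \cdots e_n$ one obtains $\syz^e M_i \cong M_i$ simultaneously. Since the minimal free resolution of a finite direct sum is the direct sum of the minimal free resolutions of the summands, $\syz^e$ commutes with finite direct sums, so $\syz^e(M_1 \oplus \cdots \oplus M_n) \cong \syz^e M_1 \oplus \cdots \oplus \syz^e M_n \cong M_1 \oplus \cdots \oplus M_n$, proving periodicity.

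For (2), my plan is to reduce to the case of a short exact sequence and then propagate complexity bounds along the long sequence. In the short exact sequence case $0 \to A \to B \to C \to 0$, applying $\Ext^*_R(-,k)$ produces a long exact sequence that yields the familiar inequalities
\[
\beta_i(A) \le \beta_i(B) + \beta_{i+1}(C), \quad \beta_i(B) \le \beta_i(A) + \beta_i(C), \quad \beta_i(C) \le \beta_{i-1}(A) + \beta_i(B).
\]
Since the defining bound for $\cx M \le r$ (polynomial growth of the Betti numbers of degree at most $r-1$, or eventual vanishing when $r = 0$) is insensitive to a unit shift of the index, any two of $\cx A$, $\cx B$, $\cx C$ being at most $r$ forces the third to be at most $r$.

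For general $n$, I would set $K_j = \ker(M_j \to M_{j+1}) = \mathrm{im}(M_{j-1} \to M_j)$ with the conventions $K_1 = 0$ and $K_{n+1} = 0$, obtaining short exact sequences $0 \to K_j \to M_j \to K_{j+1} \to 0$ for each $j = 1, \dots, n$. Propagating rightward from $\cx K_1 = 0$ and applying the short exact sequence case at each step (using the hypothesis $\cx M_j \le r$ for $j < t$) yields $\cx K_t \le r$; a symmetric leftward propagation from $\cx K_{n+1} = 0$ gives $\cx K_{t+1} \le r$. Applying the short exact sequence case once more to $0 \to K_t \to M_t \to K_{t+1} \to 0$ finally forces $\cx M_t \le r$. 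The argument is essentially formal, and I foresee no substantive obstacle beyond the straightforward bookkeeping required to track that the polynomial growth bound is stable under the unit index shifts appearing in the Ext inequalities.
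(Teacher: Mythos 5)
Your proof is correct and follows essentially the same route as the paper: part (1) via a common period (which the paper dismisses as obvious), and part (2) by reducing to the short exact sequence case and using the long exact sequence in $\Tor(-,k)$ to bound Betti numbers, noting that the polynomial growth condition is stable under index shifts. The only difference is that you spell out the reduction from a long exact sequence to short ones, which the paper leaves implicit.
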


\begin{proof}

(1) is obvious so we need only show (2), and it suffices to show the statement when $n=3$.
Suppose that $M_2,M_3$ have complexity at most $r$.
Then we find $p,q\in\R_{>0}$ such that $\beta_j^R(M_2)\le pj^{r-1}$ and $\beta_j^R(M_3)\le qj^{r-1}$ for $j\gg0$.
The induced exact sequence $\Tor_{j+1}^R(M_3,k) \to \Tor_j^R(M_1,k) \to \Tor_j^R(M_2,k)$ shows that $\beta_j^R(M_1)\le\beta_j^R(M_2)+\beta_{j+1}^R(M_3)\le(p+qr)j^{r-1}$ for $j\gg0$.
Therefore we obtain $\cx_R(M_3)\le r$.
The other cases are handled similarly.
\end{proof}

The subcategory $\cmp(R)$ of $\mod R$ is stable under syzygies.

\begin{lem}\label{8}
Let $R$ be a local ring.
Let $0\to N\to F\to M\to0$ be an exact sequence in $\mod R$ such that $F$ is free and $M$ is maximal Cohen--Macaulay.
Then $M$ belongs to $\cmp(R)$ if and only if so does $N$.
\end{lem}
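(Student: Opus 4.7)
The plan is to reduce the question to a prime-by-prime statement on the punctured spectrum and close the loop with Auslander--Buchsbaum. First I would verify that $N$ is maximal Cohen--Macaulay: localizing the short exact sequence at any $\p\in\spec R$ preserves exactness, $F_\p$ is free so $\depth F_\p\ge\dim R_\p$ in the Cohen--Macaulay setting of the section, and $\depth M_\p\ge\dim R_\p$ by hypothesis, so the depth lemma yields $\depth N_\p\ge\dim R_\p$. Thus it suffices to show that $M$ is locally free on the punctured spectrum if and only if $N$ is.

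Fix a nonmaximal prime $\p\in\spec R$. If $M_\p$ is $R_\p$-free, the localized surjection $F_\p\to M_\p$ splits, so $N_\p$ is a direct summand of a finitely generated free module over the local ring $R_\p$; hence $N_\p$ is free. This handles one implication.

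For the converse, suppose $N_\p$ is $R_\p$-free. Then the localized sequence is a finite free resolution of $M_\p$ of length at most one, so $\pd_{R_\p}M_\p\le 1$. The key step is to promote this to freeness via Auslander--Buchsbaum: since $M$ is maximal Cohen--Macaulay, $\depth M_\p\ge\dim R_\p\ge\depth R_\p$, hence $\pd_{R_\p}M_\p=\depth R_\p-\depth M_\p\le 0$, forcing $M_\p$ to be free. No serious obstacle is anticipated; the only point to watch is that the Auslander--Buchsbaum estimate is valid uniformly in $\p$, which is immediate from the maximal Cohen--Macaulay hypothesis on $M$.
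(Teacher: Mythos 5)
Your proof is correct and follows essentially the same route as the paper's: one direction by splitting the localized sequence, the other by combining $\pd_{R_\p}M_\p\le 1$ with the Auslander--Buchsbaum formula and the maximal Cohen--Macaulayness of $M_\p$. The only difference is that you spell out the depth-lemma verification that $N$ is maximal Cohen--Macaulay, which the paper simply notes without proof.
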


\begin{proof}
Note that all the modules $N,F,M$ are maximal Cohen--Macaulay.
Hence the assertion is equivalent to saying that $M$ belongs to $\cmz(R)$ if and only if so does $N$.
The ``if'' part follows from the fact that $\cmz(R)$ is stable under syzygies.
To show the ``only if'' part, assume that $N$ is in $\cmz(R)$.
Let $\p$ be a nonmaximal prime ideal of $R$.
Then $N_\p$ is $R_\p$-free, and we see that the $R_\p$-module $M_\p$ has projective dimension at most $1$.
Note that $M_\p$ is maximal Cohen--Macaulay over $R_\p$.
The Auslander--Buchsbaum formula implies that $M_\p$ is free.
Hence $M$ is in $\cmz(R)$.
\end{proof}

We state some containments among indecomposable maximal Cohen--Macaulay modules over Cohen--Macaulay local rings, one of which is a homomorphic image of the other.

\begin{prop}\label{10}
Let $R$ be a Cohen--Macaulay local ring of dimension $d$.
Let $I$ be an ideal of $R$ such that $R/I$ is a maximal Cohen--Macaulay $R$-module.
Then the following statements hold.
\begin{enumerate}[\rm(1)]
\item
$\ind\cm(R/I)$ is contained in $\ind\cm(R)$.
\item
$\ind\cmp(R/I)$ is contained in $\ind\cmp(R)$.
\item
$\ind\cm(R/I)$ is contained in $\ind\cmp(R)$, if $\v(I)\subseteq\v(0:I)$.
\end{enumerate}
\end{prop}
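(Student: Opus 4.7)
The plan is to prove the three containments in turn, with (2) and (3) building on (1).

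For (1), given $M \in \ind\cm(R/I)$, I verify the MCM condition $\depth M_\p \ge \dim R_\p$ one prime at a time. When $I \not\subseteq \p$ the localization $M_\p$ vanishes and there is nothing to check. When $I \subseteq \p$, setting $\q = \p/I$, the fact that $R$ and $R/I$ are both Cohen--Macaulay of dimension $d$ (the latter because $R/I$ is MCM over $R$) combined with catenarity gives $\dim(R/I)_\q = d - \dim R/\p = \dim R_\p$, so the MCM inequality over $R/I$ translates directly into the one over $R$. Indecomposability over $R$ follows from $\End_R(M) = \End_{R/I}(M)$, since $I$ annihilates $M$ and therefore every $R$-linear endomorphism of $M$ is already $R/I$-linear.

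For (2), by (1) I only need to promote failure of local freeness on the punctured spectrum of $R/I$ to failure on the punctured spectrum of $R$. Pick a non-maximal $\q = \p/I$ at which $M_\q$ is not $(R/I)_\q$-free. If $M_\p \cong R_\p^{\oplus n}$, then $IM = 0$ forces $I_\p M_\p = 0$, giving either $n = 0$ (so $M_\q = M_\p = 0$ is free, a contradiction) or $I_\p = 0$ (so $(R/I)_\p = R_\p$ and $M_\q \cong (R/I)_\q^{\oplus n}$ is free, again a contradiction). Hence $M \in \cmp(R)$.

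For (3), again appealing to (1), the remaining task is to exhibit a non-maximal prime $\p$ of $R$ at which $M_\p$ is not $R_\p$-free. Since $M$ is MCM over $R/I$ and $\dim R/I = d$, one has $\dim_R M = d$, so $\supp_R M = \v(\ann_R M)$ contains a prime $\p$ with $\dim R/\p = d$; such $\p$ is non-maximal (the case $d = 0$ being vacuous), contains $I \subseteq \ann_R M$, and satisfies $M_\p \ne 0$. The hypothesis $\v(I) \subseteq \v(0:I)$ then forces $0:I \subseteq \p$. Suppose for contradiction $M_\p$ is $R_\p$-free; the argument of (2) yields $I_\p = 0$, and since $I$ is finitely generated a common-denominator argument produces $x \in R \setminus \p$ with $xI = 0$, i.e., $x \in 0:I \subseteq \p$, a contradiction.

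The main obstacle is part (3): locating a non-maximal prime $\p$ in $\supp_R M$ containing $I$, and extracting from $I_\p = 0$ an honest element of $(0:I) \setminus \p$ to trigger the hypothesis $\v(I) \subseteq \v(0:I)$; this is where noetherianness, via finite generation of $I$, genuinely enters the argument. Parts (1) and (2) are more formal, depending only on the depth-dimension compatibility between $R$ and $R/I$ (secured by the MCM hypothesis on $R/I$) and on the fact that a nonzero free module over a local ring has zero annihilator.
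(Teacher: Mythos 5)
Your argument is correct and matches the paper's proof in all essentials: the same key localization observation (freeness of $M_\p$ over $R_\p$ together with $IM=0$ forces $M_\p=0$ or $I_\p=0$) drives both (2) and (3), with the paper merely phrasing (3) as a contrapositive and invoking $\depth M=\dim R$ rather than your prime-by-prime check in (1). One small shared caveat: both your proof and the paper's tacitly assume $d>0$ in part (3) --- your parenthetical that the case $d=0$ is ``vacuous'' is not accurate, since for an artinian $R$ with $0\neq I\neq R$ the hypothesis $\v(I)\subseteq\v(0:I)$ holds while $\cmp(R)=\emptyset$ --- but the proposition is only ever applied with $d>0$.
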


\begin{proof}
Let $M$ be an indecomposable maximal Cohen--Macaulay $R/I$-module.
The definition of indecomposability says $M\ne0$.
The equalities $\depth M=\dim R/I=\dim R$ imply $M$ is a maximal Cohen--Macaulay $R$-module.
It is directly checked that $M$ is indecomposable as an $R$-module.
Now (1) follows.

Let $\p$ be a prime ideal of $R$ such that $M_\p\cong(R_\p)^{\oplus n}$ for some $n\ge0$.
If $n=0$, then $M_\p=0$.
If $n>0$, then $IR_\p=0$ since $IM=0$, and hence $M_\p\cong R_\p^{\oplus n}=(R/I)_\p^{\oplus n}$.

Let us consider the case where $M$ is in $\cmp(R/I)$.
Then there is a prime ideal $\q$ of $R$ with $I\subseteq\q\ne\m$ such that $M_\q$ is not $(R/I)_\q$-free.
Letting $\p:=\q$ in the above argument, we observe that $M_\q$ is not $R_\q$-free (note that the zero module is free).
Thus $M$ is in $\cmp(R)$, and (2) follows.

Next we consider the case where $M$ is in $\cmz(R)$.
As $\dim M=\dim R/I=d>0$, there is a nonmaximal prime ideal $\r$ of $R$ such that $M_\r\ne0$.
Letting $\p:=\r$ in the above argument, we have $IR_\r=0$.
Hence $\r$ is not in the support of the $R$-module $I$, which is equivalent to saying that $\r$ does not contain $(0:I)$.
On the other hand, $\r$ is in the support of the $R$-module $M$, which implies that $\r$ contains $I$.
Thus $\v(I)$ is not contained in $\v(0:I)$.
We now observe that (3) holds.
\end{proof}

The lemma below says finite $\cm$-representation type is equivalent to finite $\cmz$-representation type.

\begin{lem}\label{c2}
Let $R$ be a Cohen--Macaulay local ring.
If $R$ has infinite $\cm$-representation type, then $R$ has infinite $\cmz$-representation type.
\end{lem}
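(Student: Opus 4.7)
The plan is to establish the logically equivalent contrapositive: if $R$ has finite $\cmz$-representation type, then $R$ has finite $\cm$-representation type. Since every indecomposable maximal Cohen--Macaulay module belongs either to $\cmz(R)$ or to $\cmp(R)$, the task reduces to showing that the hypothesis forces $\cmp(R)$ to contain only the zero module---equivalently, that $R$ is an isolated singularity---at which point $\cm(R)=\cmz(R)\cup\{0\}$ and the conclusion is immediate.

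The isolated-singularity case being trivial, the core content is the implication: if $R$ has a non-isolated singularity, then $R$ already has infinite $\cmz$-representation type. To approach this, I would construct an infinite family of pairwise non-isomorphic indecomposable modules in $\cmz(R)$. A natural source is $d$th syzygies of $R/I$ for $\m$-primary ideals $I$: each such syzygy is MCM (by the depth lemma applied to a minimal free resolution of the finite-length module $R/I$), and since $(R/I)_\q=0$ for every nonmaximal prime $\q$, the localization $(\syz^d_R(R/I))_\q$ becomes $R_\q$-free. Hence $\syz^d_R(R/I)\in\cmz(R)$ for every $\m$-primary $I$, and taking $I=\m^n$ with $n\to\infty$ produces modules of unbounded rank.

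The main obstacle is converting this ``bounded pool, unbounded rank'' situation into a genuine contradiction: if $\cmz(R)$ had only finitely many indecomposables $X_1,\dots,X_n$, the Krull--Schmidt multiplicities $a_i(n)$ in $\syz^d_R(R/\m^n)=\bigoplus X_i^{a_i(n)}$ would grow without bound, which by itself is not immediately absurd (indeed, this is precisely what happens over a regular local ring, where $R$ is the unique indecomposable MCM module). I expect the sharpness to come from the non-isolated singularity: fixing $\p\in\sing R\setminus\{\m\}$, the localizations $(X_i)_\p$ are all free $R_\p$-modules of fixed ranks, while the structure of $R_\p$ (being singular) obstructs realizing arbitrary sums of such. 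Pinning this down is the hardest step and would plausibly be carried out via a Fitting-ideal or $\Tor$-annihilator argument in the spirit of the proof of Theorem \ref{12}, or alternatively via Auslander--Reiten theory on the singular local ring $R_\p$; a fallback strategy would be to bypass the reduction to isolated singularities and instead produce an explicit injection $\ind\cmp(R)\hookrightarrow\ind\cmz(R)$ via a Cohen--Macaulay approximation construction applied to $\cmz$-covers.
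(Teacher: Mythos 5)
Your reduction is the right one, and it is exactly the reduction the paper makes: the lemma is equivalent to the assertion that finite $\cmz$-representation type forces $R$ to be an isolated singularity, whence $\cm(R)=\cmz(R)$ and the conclusion is immediate. The problem is that you never actually prove that assertion. The paper does not prove it either from scratch --- it cites it as a known theorem (\cite[Corollary 1.2]{dim}, a result of Dao and Takahashi strengthening the Huneke--Leuschke theorem from finite $\cm$-type to finite $\cmz$-type). So the entire mathematical content of the lemma is concentrated in the step you explicitly leave open.

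Your attempted construction does not close the gap, and you candidly say so yourself. The modules $\syz_R^d(R/\m^n)$ do lie in $\cmz(R)$ and do have unbounded rank, but, as you observe, unbounded Krull--Schmidt multiplicities over a finite pool of indecomposables is not a contradiction (the regular case realizes exactly this). The subsequent suggestions --- exploiting a singular prime $\p$ via Fitting ideals or $\Tor$-annihilators, Auslander--Reiten theory over $R_\p$, or an injection $\ind\cmp(R)\hookrightarrow\ind\cmz(R)$ via approximations --- are all stated in the conditional (``I expect,'' ``would plausibly'') and none is carried out. Note in particular that the $\Tor$-annihilator argument in the spirit of Theorem \ref{12} uses $C=\syz_R^d(R/\p)$ for a \emph{nonmaximal} singular prime $\p$, and that module lies in $\cmp(R)$, not in $\cmz(R)$, so it cannot be fed into a finiteness hypothesis on $\cmz(R)$ without further ideas; this is precisely why the $\cmz$ version of Huneke--Leuschke is a genuinely separate theorem. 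As written, the proposal is an accurate outline of what needs to be proved together with an acknowledgment that the crucial step is missing; either supply a proof of that step or cite \cite[Corollary 1.2]{dim} for it as the paper does.
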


\begin{proof}
Suppose that $R$ has finite $\cmz$-representation type.
Then by \cite[Corollary 1.2]{dim} it is an isolated singularity.
Hence $\cm(R)=\cmz(R)$, and we have $\ind\cm(R/I)=\ind\cmz(R/I)$, which is a finite set.
This contradicts the assumption that $R$ has infinite $\cm$-representation type.
\end{proof}

Now we can prove the first main result of this section, which gives various necessary conditions for a Cohen--Macaulay local ring to have finite $\cmp$-representation type.

\begin{thm}\label{19}
Let $R$ be a Cohen--Macaulay local ring of dimension $d>0$.
Let $I$ be an ideal of $R$, and assume that $R/I$ is a maximal Cohen--Macaulay $R$-module.
Then $R$ has infinite $\cmp$-representation type in each of the following cases.
\begin{enumerate}[\rm(1)]
\item
$R/I$ has infinite $\cmp$-representation type.
\item
$\v(I)\subseteq\v(0:I)$ and
\begin{enumerate}[\rm(a)]
\item
$R/I$ has infinite $\cm$-representation type, or
\item
$d\ge2$.
\end{enumerate}
\item
$\height(I+(0:I))<d$, $R/I$ has infinite $\cm$-representation type, and
\begin{enumerate}[\rm(a)]
\item
$R/I$ is a Gorenstein ring, or
\item
$R/I$ is a domain, or
\item
$d=1$ and $R/I$ is analytically unramified, or
\item
$d=1$, $k$ is infinite, and $R/I$ is equicharacteristic and reduced.
\end{enumerate}
\end{enumerate}
\end{thm}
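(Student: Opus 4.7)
The plan is to treat the three cases in turn. Cases (1) and (2a) are immediate from Proposition \ref{10}: part (2) gives the embedding $\ind\cmp(R/I)\subseteq\ind\cmp(R)$, handling case (1), and part (3) gives $\ind\cm(R/I)\subseteq\ind\cmp(R)$ under the hypothesis $\v(I)\subseteq\v(0:I)$, handling case (2a).

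For case (2b) I would argue by contradiction using Corollary \ref{16}(1). The hypothesis $\v(I)\subseteq\v(0:I)$ forces $0:I\subseteq\p$, hence $IR_\p\ne0$, at every nonmaximal prime $\p\supseteq I$. Since $(R/I)_\p$ is a nonzero maximal Cohen--Macaulay $R_\p$-module, were $R_\p$ regular the Auslander--Buchsbaum formula would make $(R/I)_\p$ free and force $IR_\p=0$, a contradiction. This, together with $\m\in\sing R$ (which holds since $I\ne0$ and $R/I$ maximal Cohen--Macaulay preclude $R$ from being regular), gives $\v(I)\subseteq\sing R$. If $R$ had finite $\cmp$-representation type, Corollary \ref{16}(1) would give $d=\dim\v(I)\le\dim\sing R\le1$, contradicting $d\ge2$.

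For case (3) the common thread is that the height condition provides a nonmaximal prime $\p\supseteq I+(0:I)$, at which $IR_\p\ne0$ and $(R/I)_\p\ne0$. Subcase (3b) follows directly: when $R/I$ is a domain, every indecomposable MCM $R/I$-module $N$ is torsion-free hence faithful, so $\supp_R N=\v(I)\ni\p$ and $N_\p$ is a nonzero $(R/I)_\p$-module that cannot be $R_\p$-free since $IR_\p\ne0$; thus $N\in\cmp(R)$, and Proposition \ref{10}(1) keeps it indecomposable over $R$. For (3a), the Gorenstein case, I would combine Gorenstein duality on $R/I$ with maximal Cohen--Macaulay approximations over $R$ to produce, from each $N\in\ind\cm(R/I)$, an indecomposable MCM $R$-module in $\cmp(R)$ that records $N$. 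For (3c) and (3d), the one-dimensional setup, I would use the integral closure and conductor of $R/I$ together with analytic unramifiedness or with reducedness plus an infinite residue field.

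The hard part will be (3a): in the Gorenstein case with $R/I$ not a domain, indecomposable MCM $R/I$-modules can have support strictly smaller than $\v(I)$, so they may fail to lie in $\cmp(R)$ on their own, and merely adjoining $R/I$ yields only $R/I$ itself as a new indecomposable summand. A more delicate construction is needed to produce infinitely many distinct indecomposable $R$-modules in $\cmp(R)$, one for each isomorphism class of indecomposable MCM $R/I$-modules, in a way that preserves non-isomorphism.
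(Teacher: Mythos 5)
Your handling of (1), (2a), and (3b) matches the paper, and your argument for (2b) is a correct and slightly more direct alternative: the paper first reduces (via (2a) and \cite[Corollary 2]{HL02}) to the case where $R/I$ has finite $\cm$-representation type, deduces that $R/I$ is a normal domain since $d\ge2$, so that $I$ is a minimal prime, and then shows $I\in\sing R$; you instead show directly from $\v(I)\subseteq\v(0:I)$ and Auslander--Buchsbaum that every nonmaximal prime in $\v(I)$ lies in $\sing R$, which gives $\dim\sing R\ge d\ge 2$ against Corollary \ref{16}(1) without any case distinction. Both routes rest on Corollary \ref{16}(1), so the substance is the same.

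The genuine gap is in (3a), which you yourself flag as unresolved. Your proposed route via maximal Cohen--Macaulay approximations over $R$ is not developed and it is not clear it can be made to work; the paper's actual mechanism is much more elementary and entirely internal to $R/I$. Fix a nonmaximal prime $\p\supseteq I+(0:I)$, so that $IR_\p$ is a nonzero \emph{proper} ideal of $R_\p$. The key observation is: if $M\in\ind\cmz(R/I)$ and $M_\p\ne0$, then $M_\p\cong (R_\p/IR_\p)^{\oplus n}$ with $n>0$ is not $R_\p$-free, so $M\in\ind\cmp(R)$; and if instead $M_\p=0$, then localizing $0\to\syz_{R/I}M\to (R/I)^{\oplus n}\to M\to 0$ at $\p$ shows $(\syz_{R/I}M)_\p\cong (R_\p/IR_\p)^{\oplus n}\ne0$, and since $R/I$ is Gorenstein the syzygy $\syz_{R/I}M$ is again indecomposable and non-free (\cite[Lemma (8.17)]{Y}), hence lies in $\ind\cmp(R)$ by the first case. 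Thus for every $M\in\ind\cmz(R/I)$, either $M$ or $\syz_{R/I}M$ belongs to $\ind\cmp(R)$; since $\ind\cmz(R/I)$ is infinite (Lemma \ref{c2}) and syzygy is injective on isomorphism classes of non-free maximal Cohen--Macaulay modules over the Gorenstein ring $R/I$, this forces $\ind\cmp(R)$ to be infinite. Your worry that indecomposables with small support "may fail to lie in $\cmp(R)$ on their own" is exactly right, and the syzygy trick is the missing idea. Similarly, (3c) and (3d) are only gestured at: the paper reduces both to producing infinitely many indecomposable maximal Cohen--Macaulay $R/I$-modules \emph{having a rank} (which then have full support and fall under the first observation above), quoting \cite[Theorem 4.10]{LW} for the analytically unramified case and the bounded-type classification \cite[Theorems 17.9, 17.10]{LW} together with \cite[Corollary 2.2]{LW2} for the reduced equicharacteristic case; your appeal to "integral closure and conductor" does not by itself supply these modules.
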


\begin{proof}
(1)\&(2a) These assertions immediately follow from (2) and (3) of Proposition \ref{10}, respectively.

(2b) In view of (2a), we may assume that $R/I$ has finite $\cm$-representation type.
It follows from \cite[Corollary 2]{HL02} that $R/I$ is an isolated singularity.
As $d\ge2$, the ring $R/I$ is a (normal) domain.
Hence $\p:=I$ is a prime ideal of $R$.
As $\dim R/\p=d$, the prime ideal $\p$ is minimal.
The assumption $\v(\p)\subseteq\v(0:\p)$ implies $(0:_R\p)\subseteq\p$.
Localizing this inclusion at $\p$, we get an inclusion $(0:_{R_\p}\p R_\p)\subseteq\p R_\p$, which particularly says that $R_\p$ is not a field.
Therefore $\p$ belongs to $\sing R$.

Suppose that $R$ has finite $\cmp$-representation type.
Then Corollary \ref{16}(1) implies that $\sing R$ has dimension at most one.
In particular, we obtain $d=\dim R/\p\le1$, which is a contradiction.
Consequently, $R$ has infinite $\cmp$-representation type.

(3) We find a nonmaximal prime ideal $\p$ of $R$ that contains the ideal $I+(0:I)$.
Then, as $\p$ contains $I$, the prime ideal $\p/I$ of $R/I$ is defined, which is not maximal.
Also, since $\p$ contains $(0:I)$ as well, we see that $IR_\p$ is a nonzero proper ideal of $R_\p$.

We establish several claims.

\begin{claim}\label{c1}
Let $M\in\ind\cmz(R/I)$ with $M_\p\ne0$.
Then $M\in\ind\cmp(R)$.
\end{claim}

\begin{proof}[Proof of Claim]
Proposition \ref{10}(1) implies $M\in\ind\cm(R)$.
There exists an integer $n\ge0$ such that
$$
M_\p=M_{\p/I}\cong(R/I)_{\p/I}^{\oplus n}=(R/I)_\p^{\oplus n}=(R_\p/IR_\p)^{\oplus n}.
$$
Since $M_\p$ is nonzero, we have to have $n>0$.
Since $IR_\p$ is a nonzero proper ideal of $R_\p$, we have that $M_\p$ is not a free $R_\p$-module.
We now conclude that $M$ belongs to $\ind\cmp(R)$.
\renewcommand{\qedsymbol}{$\square$}
\end{proof}

\begin{claim}\label{c3}
When $R/I$ is Gorenstein, for each $M\in\ind\cmz(R/I)$, either $M$ or $\syz_{R/I}M$ is in $\ind\cmp(R)$.
\end{claim}

\begin{proof}[Proof of Claim]
If $M_\p\ne0$, then $M\in\ind\cmp(R)$ by Claim \ref{c1}.
Assume $M_\p=0$.
There is an exact sequence $0 \to N \to (R/I)^{\oplus n} \to M \to 0$, where we set $N:=\syz_{R/I}M$ and $n:=\nu_{R/I}(M)>0$.
Localization at $\p$ gives an isomorphism $N_\p\cong(R_\p/IR_\p)^{\oplus n}$.
As $n>0$ and $IR_\p$ is a proper ideal, the module $N_\p$ is nonzero.
Since $R/I$ is Gorenstein, we apply Lemma \ref{8} and \cite[Lemma (8.17)]{Y} to see that $N$ belongs to $\ind\cmz(R/I)$.
Using Claim \ref{c1} again, we obtain $N\in\ind\cmp(R)$.
\renewcommand{\qedsymbol}{$\square$}
\end{proof}

\begin{claim}\label{ir}
There is an inclusion
$$
\{M\in\ind\cmz(R/I)\mid\text{$M$ has a rank as an $R/I$-module}\}\subseteq\ind\cmp(R).
$$
\end{claim}

\begin{proof}[Proof of Claim]
Take $M$ from the left-hand side.
Since the $R/I$-module $M$ is maximal Cohen--Macaulay, its annihilator has grade $0$.
Hence $M$ has positive rank, and we see that $\supp_{R/I}M=\spec R/I$.
Therefore $M_\p=M_{\p/I}$ is nonzero.
It follows from Claim \ref{c1} that $M$ belongs to $\ind\cmp(R)$.
\renewcommand{\qedsymbol}{$\square$}
\end{proof}

(3a) Suppose that $R$ has finite $\cmp$-representation type, namely, $\ind\cmp(R)$ is a finite set.
Lemma \ref{c2} guarantees that the set $\ind\cmz(R/I)$ is infinite, and hence the set difference
$$
\SS:=\ind\cmz(R/I)\setminus\ind\cmp(R)
$$
is infinite as well.
Thus we can choose a (countably) infinite subset $\{M_1,M_2,M_3,\dots\}$ of $\SS$.
By Claim \ref{c3} we see that $\syz_{R/I}M_i$ belongs to $\ind\cmp(R)$ for all $i$.
Note that $\syz_{R/I}M_i\not\cong\syz_{R/I}M_j$ for all distinct $i,j$ since $R/I$ is Gorenstein and $M_i,M_j$ are maximal Cohen--Macaulay over $R/I$.
It follows that $\ind\cmp(R)$ is an infinite set, which is a contradiction.
Thus $R$ has infinite $\cmp$-representation type.

(3b) Since $R/I$ is a domain, every $R/I$-module has a rank.
Claim \ref{ir} implies that $\ind\cmz(R/I)$ is contained in $\ind\cmp(R)$, while $\ind\cmz(R/I)$ is an infinite set by Lemma \ref{c2}.
It follows that $R$ has infinite $\cmp$-representation type.

(3c) Note that $\cm(R/I)=\cmz(R/I)$.
Since $R/I$ is analytically unramified and has infinite $\cm$-representation type, it follows from \cite[Theorem 4.10]{LW} that the left-hand side of the inclusion in Claim \ref{ir} is infinite, and so is the right-hand side $\ind\cmp(R)$, that is, $R$ has infinite $\cmp$-represenation type.

(3d) Since $k$ is infinite and $R/I$ is equicharacteristic, we can apply \cite[Theorem 17.10]{LW} to deduce that if $R/I$ has unbounded $\cm$-representation type, then the left-hand side of the inclusion in Claim \ref{ir} is infinite (as $R/I$ is reduced), and we are done.
Hence we may assume that $R/I$ has bounded $\cm$-representation type.
By \cite[Theorems 10.1 and 17.10]{LW} the completion $\widehat{R/I}$ has infinite and bounded $\cm$-representation type.
According to \cite[Theorem 17.9]{LW}, the ring $\widehat{R/I}$ is isomorphic to one of the following three rings.
$$
k[\![x,y]\!]/(x^2),\quad
k[\![x,y]\!]/(x^2y),\quad
k[\![x,y,z]\!]/(yz,x^2-xz,xz-z^2).
$$
The indecomposable maximal Cohen--Macaulay modules over these rings are classified; one can find complete lists of those modules in \cite[Propositions 4.1 and 4.2]{BGS} and \cite[Example 14.23]{LW}.
We can check by hand that each of these rings has an infinite family of nonisomorphic indecomposable maximal Cohen--Macaulay modules of rank $1$.
This family of modules is extended from a family of $R/I$-modules by \cite[Corollary 2.2]{LW2}, and these are nonisomorphic indecomposable maximal Cohen--Macaulay $R/I$-modules of rank $1$.
Again, the left-hand side of the inclusion in Claim \ref{ir} is infinite, and the proof is completed.
\end{proof}

Two irreducible elements $p,q$ of an integral domain $R$ are said to be {\em distinct} if $pR\ne qR$.
Applying our Theorem \ref{19}, we can obtain the following corollary, which is a basis in the next Section \ref{m} to obtain a stronger result (Theorem \ref{46}).

\begin{cor}\label{39}
Let $(S,\n)$ be a regular local ring of dimension two.
Take an element $0\ne f\in\n$ and set $R=S/(f)$.
Suppose that $R$ is not an isolated singularity $($equivaently, is not reduced$)$ but has finite $\cmp$-representation type.
Then $f$ has one of the following forms:
$$
f=
\begin{cases}
p^2qr & \text{where $p,q,r$ are distinct irreducibles with $S/(pqr)$ having finite $\cm$-representation type},\\
p^2q & \text{where $p\ne q$ are irreducibles with $S/(pq)$ having finite $\cm$-representation type},\\
p^2 & \text{where $p$ is an irreducible with $S/(p)$ having finite $\cm$-representation type}.
\end{cases}
$$
\end{cor}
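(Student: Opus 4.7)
The plan is to apply Theorem \ref{19}(3a) repeatedly with carefully chosen principal ideals $I = hR$, where $h$ runs over various divisors of $f$, in order to constrain the multiplicities in the factorization $f = p_1^{a_1}\cdots p_n^{a_n}$ into pairwise non-associate irreducibles of the UFD $S$. Since $R$ is not reduced, some $a_i \geq 2$; reorder so that $a_1 \geq 2$.

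The engine is as follows. For $h \mid f$, the ideal $I = hR$ satisfies $R/I = S/(h)$ (a one-dimensional hypersurface, hence Gorenstein and maximal Cohen--Macaulay over $R$), $(0:_R h) = (f/h)R$, and $I + (0:I) = (h, f/h)R$. Whenever $h$ and $f/h$ share a common irreducible factor $p$, the ideal $(h, f/h)$ lies in $(p)$, so $I + (0:I)$ is contained in the non-maximal prime $(p)/(f)$ of $R$; in particular it is not $\m$-primary. Moreover, by Huneke--Leuschke \cite{HL02}, any Cohen--Macaulay local ring of finite $\cm$-representation type has an isolated singularity, so any non-reduced one-dimensional hypersurface (which necessarily has a non-isolated singularity) has infinite $\cm$-representation type. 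Under these two conditions, Theorem \ref{19}(3a) would give $R$ infinite $\cmp$-representation type, contradicting our hypothesis.

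Running this machine: to rule out $a_1 \geq 3$, take $h = p_1^{a_1-1}$, for which $S/(p_1^{a_1-1})$ is non-reduced (since $a_1 - 1 \geq 2$) and shares the factor $p_1$ with $f/h = p_1 p_2^{a_2}\cdots p_n^{a_n}$. To rule out $a_j \geq 2$ for some $j \geq 2$ (reorder so that $a_2 \geq 2$), take $h = p_1 p_2^2$; once again $p_1$ is the shared factor, and $S/(p_1 p_2^2)$ is non-reduced because $p_1 p_2$ is a nonzero element whose square is zero. Together these force $f = p_1^2 p_2\cdots p_n$ up to a unit of $S$. Finally, taking $h = p_1 p_2\cdots p_n = \operatorname{rad}(f)$ gives $f/h = p_1$, so the shared-factor condition still holds, but now $R/I = S/(p_1\cdots p_n)$ is reduced; Theorem \ref{19}(3a) therefore forces this reduced hypersurface itself to have finite $\cm$-representation type. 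The bound $n \leq 3$ then follows from the classification of one-dimensional reduced hypersurfaces of finite $\cm$-representation type (via the ADE list of Buchweitz--Greuel--Schreyer, or equivalently the bound on multiplicity), yielding the three forms in the statement.

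The main content of the argument lies in the careful choice of $h$ at each step so that the shared-factor trick makes $I + (0:I)$ miss $\m$, together with the observation that the resulting quotient hypersurfaces are non-reduced; the bound $n \leq 3$ is the only place where a classification result must be invoked as a black box.
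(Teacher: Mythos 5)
Your proposal is correct and follows essentially the same route as the paper: both arguments run Theorem \ref{19}(3a) with principal ideals $I=hR$ for divisors $h$ of $f$ sharing an irreducible factor with $f/h$, use \cite{HL02} to see that the non-reduced quotients have infinite $\cm$-representation type, and invoke the multiplicity bound for finite $\cm$-representation type to get $n\le 3$. The only (cosmetic) difference is that the paper handles both exclusions ($a_1\ge3$ and $a_l\ge2$ for $l\ge2$) with the single element $p_1^2p_2\cdots p_n$, whereas you use the two elements $p_1^{a_1-1}$ and $p_1p_2^2$.
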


\begin{proof}
As $S$ is factorial, we can write $f=p_1^{a_1}\cdots p_n^{a_n}$, where $p_1,\dots,p_n$ are distinct irreducible elements and $n,a_1,\dots,a_n$ are positive integers.
If $a_1=\cdots=a_n=1$, then $R$ is reduced, and hence it is an isolated singularity, which is a contradiction.
Thus we may assume $a_1\ge2$.

Put $x:=p_1\cdots p_n\in R$.
We have
$$
(x)+(0:x)=(p_1\cdots p_n,p_1^{a_1-1}p_2^{a_2-1}\cdots p_n^{a_n-1})\subseteq(p_1),
$$
and hence $\height((x)+(0:x))=0<1$.
Taking advantage of Theorem \ref{19}(3a), we observe that $R/(x)$ has finite $\cm$-representation type.
Also, $R/(x)=S/(p_1\cdots p_n)$ has multiplicity at least $n$.
By \cite[Theorem 4.2 and Proposition 4.3]{LW} we see that $n\le3$.

Assume either $a_1\ge3$ or $a_l\ge2$ for some $l\ge2$, say $l=2$.
Then put $x:=p_1^2p_2\cdots p_n\in R$.
We have
$$
(x)+(0:x)=(p_1^2p_2\cdots p_n,p_1^{a_1-2}p_2^{a_2-1}\cdots p_n^{a_n-1})\subseteq\begin{cases}
(p_1) & \text{(if $a_1\ge3$)},\\
(p_2) & \text{(if $a_2\ge2$)}
\end{cases}
$$
and hence $\height((x)+(0:x))=0<1$.
The ring $R/(x)=S/(p_1^2p_2\cdots p_n)$ is not reduced, so it is not an isolated singularity.
By \cite[Corollary 2]{HL02}, it has infinite $\cm$-representation type.
Theorem \ref{19}(3a) implies that $R$ has infinite $\cmp$-representation type, which is a contradiction.
Thus $a_1=2$ and $a_2=\cdots=a_n=1$.

Getting together all the above arguments completes the proof of the corollary.
\end{proof}

To give applications of Theorem \ref{19}, we establish a lemma.

\begin{lem}\label{41}
Let $R$ be a Gorenstein local ring of finite $\cmp$-representation type.
Then for all $M\in\ind\cmp(R)$ one has $\cx_RM=1$.
\end{lem}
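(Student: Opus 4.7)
The plan is to show that every $M\in\ind\cmp(R)$ is periodic, which forces $\cx_R M=1$.

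First I would verify that the syzygy operator restricts to a self-map on $\ind\cmp(R)$. Let $M\in\ind\cmp(R)$. Since $M$ is indecomposable and $M\not\cong R$ (as $R$ is locally free on the punctured spectrum, so $R\in\cmz(R)$), the module $M$ has no free direct summand. Taking a minimal free resolution $F_1\to F_0\to M\to0$, the image $\syz M$ lies inside $\m F_0$, and a standard argument shows $\syz M$ has no free direct summand. Because $R$ is Gorenstein, $\syz M\in\cm(R)$, and Lemma \ref{8} ensures $\syz M\in\cmp(R)$. Moreover, since $R$ is Gorenstein, $\syz$ induces a triangulated auto-equivalence on the stable category $\lcm(R)$, which is Krull--Schmidt; thus $\syz$ sends stably indecomposable objects to stably indecomposable ones. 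Combined with the absence of free summands, this yields $\syz M\in\ind\cmp(R)$.

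Now I would apply the pigeonhole principle: since $\ind\cmp(R)$ is finite, there exist integers $e>f\ge0$ with $\syz^e M\cong \syz^f M$. Invoking the equivalence $\syz$ on $\lcm(R)$ $f$ times, one obtains $M\cong\syz^{e-f}M$ in $\lcm(R)$. Both modules have no free direct summand, so by Krull--Schmidt cancellation in $\mod R$ this lifts to an honest isomorphism $M\cong\syz^{e-f}M$ in $\mod R$. Hence $M$ is periodic.

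Finally, a periodic module has uniformly bounded Betti numbers, so $\cx_R M\le 1$; conversely, since $M$ is maximal Cohen--Macaulay and non-free over the Gorenstein ring $R$, it has infinite projective dimension, so $\beta_i^R(M)\ge 1$ for all $i\ge 0$ and thus $\cx_R M\ge 1$. Together these give $\cx_R M=1$. The principal subtlety is verifying that $\syz$ indeed preserves $\ind\cmp(R)$—this is where the Gorenstein hypothesis genuinely enters, both through Lemma \ref{8} (to remain in $\cmp$) and through the stable equivalence property (to preserve indecomposability). Once that step is in hand, the remainder is a routine pigeonhole plus standard facts about periodicity and complexity.
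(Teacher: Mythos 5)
Your proof is correct and follows essentially the same route as the paper's: Lemma \ref{8} together with the indecomposability of syzygies of indecomposable maximal Cohen--Macaulay modules over a Gorenstein ring (the paper cites \cite[Lemma 8.17]{Y} for exactly what you derive from the Krull--Schmidt property of $\lcm(R)$) shows $\syz$ preserves $\ind\cmp(R)$, finiteness plus pigeonhole gives periodicity of some syzygy, and infinite projective dimension pins the complexity at exactly one. The only cosmetic difference is that you upgrade eventual periodicity to periodicity of $M$ itself, which the paper does not need.
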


\begin{proof}
As $R$ is Gorenstein, $\syz^iM\in\ind\cmp(R)$ for all $i\ge0$ by Lemma \ref{8} and \cite[Lemma 8.17]{Y}.
Since $\ind\cmp(R)$ is a finite set, $\syz^tM$ is periodic for some $t\ge0$.
Hence $M$ has complexity at most one.
As $M$ is in $\cmp(R)$, it has to have infinite projective dimension.
Thus the complexity of $M$ is equal to one.
\end{proof}

Let $R$ be a ring.
We denote by $\ds(R)$ the {\em singularity category} of $R$, that is, the Verdier quotient of the bounded derived category of finitely generated $R$-modules by perfect complexes.
For an $R$-module $M$, we denote by $\nf_R(M)$ the {\em nonfree locus} of $M$, that is, the set of prime ideals $\p$ of $R$ such that $M_\p$ is nonfree as an $R_\p$-module.
Now we prove the following result by using Theorem \ref{19}.

\begin{thm}\label{15}
Let $R$ be a Cohen--Macaulay local ring of dimension $d>0$.
Let $I$ be an ideal of $R$ with $\v(I)\subseteq\v(0:I)$, and assume that $R/I$ is a maximal Cohen--Macaulay $R$-module.
Suppose that $R$ has finite $\cmp$-representation type.
Then:
\begin{enumerate}[\rm(1)]
\item
One has $d=1$.
\item
If $I^n=0$, then $\dim\cm(R)\le n-1$.
\item
If $R$ is Gorenstein, then $R$ is a hypersurface and $\dim\ds(R)\le n-1$.
\end{enumerate}
\end{thm}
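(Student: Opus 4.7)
The plan is to handle the three parts in sequence, with parts (1) and (2) reducing quickly to previously established results and part (3) requiring the most work.

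For (1), I would simply invoke the contrapositive of Theorem \ref{19}(2b). The hypotheses of that statement ($\v(I)\subseteq\v(0:I)$ and $R/I$ maximal Cohen--Macaulay over $R$) are precisely the hypotheses here; since we assume $R$ has finite $\cmp$-representation type, the case $d\ge 2$ is excluded, so $d=1$.

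For (2), the first step is to invoke the contrapositive of Theorem \ref{19}(2a), which forces $R/I$ to have finite $\cm$-representation type. Choose a representation generator $G\in\cm(R/I)$; since $R/I$ is MCM over $R$, so is $G$, and every MCM $R/I$-module lies in $\add_R(G)\subseteq[G]$. Given $M\in\cm(R)$, I would consider the $I$-adic filtration $0=I^nM\subseteq I^{n-1}M\subseteq\cdots\subseteq IM\subseteq M$ of length $n$. Because $d=1$ and each $I^iM$ is a submodule of the MCM module $M$ over a one-dimensional Cohen--Macaulay ring, every $I^iM$ is itself torsion-free and hence MCM over $R$; the successive quotients $I^iM/I^{i+1}M$ are $R/I$-modules. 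I would induct on the filtration length, at each step either inserting the quotient directly into $[G]$ (when it is MCM over $R/I$) or first replacing it with its $R/I$-syzygy (which is an MCM $R/I$-module, hence in $\add_{R/I}(G)\subseteq[G]$) and then invoking the closure of $[-]$ under syzygies and direct summands. This produces $M\in[G]_n$ and so $\dim\cm(R)\le n-1$. The hard part will be bookkeeping: ensuring that the step count in $[G]_n$ does not inflate past $n$ when handling quotients that fail to be MCM over $R/I$.

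For (3), the Rouquier-dimension bound $\dim\ds(R)\le n-1$ is immediate from part (2) combined with the general inequality $\dim\ds(R)\le\dim\cm(R)$ (which, for Gorenstein $R$, is the inequality noted in the paragraph after Conjecture \ref{4}). For the hypersurface claim, I would split into two cases. If $I=0$ (so $n=1$), then part (2) gives $\dim\cm(R)\le 0$, i.e.\ $R$ has finite $\cm$-representation type, and Herzog's classical theorem forces the Gorenstein ring $R$ to be a hypersurface. If $I\ne 0$, then $I^n=0$ implies that every prime of $R$ contains $I$; combining this with $\v(I)\subseteq\v(0:I)$ and the fact that $\dim R=1>0$ produces a nonmaximal prime $\p\supseteq I+(0:I)$ at which $(R/I)_\p$ is a nonzero, nonfree $R_\p$-module. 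Hence $R/I\in\cmp(R)$, and Lemma \ref{41} yields $\cx_R(R/I)=1$. The remaining step, which I expect to be the main obstacle, is to bootstrap this into $\cx_R(k)\le 1$ (which is equivalent to $R$ being a hypersurface by the Auslander--Buchsbaum--Serre characterization): I would do so via a change-of-rings argument exploiting the nilpotency of $I$, for instance through the spectral sequence $\Tor^{R/I}(k,\Tor^R(R/I,k))\Rightarrow\Tor^R(k,k)$, to bound the Betti numbers of $k$ over $R$ in terms of those of $R/I$ over $R$ and those of $k$ over $R/I$, the latter being controllable because $R/I$ has finite $\cm$-representation type.
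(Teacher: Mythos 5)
Parts (1) and the Rouquier-dimension bound in (3) agree with the paper, but both of the steps you yourself flag as "the hard part" are genuine gaps, and in each case the paper uses a different device that you would need to find.

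In (2), the $I$-adic filtration is the wrong filtration. Its quotients $I^iM/I^{i+1}M$ need not be maximal Cohen--Macaulay over $R/I$: for $R=k[\![x,y]\!]/(x^2)$, $I=(x)$ and $M=\m$ one gets $M/IM=(x,y)/(xy)$, in which the class of $x$ is a nonzero torsion element over $R/I=k[\![y]\!]$. Such a quotient does not lie in $[G]=\add_RG$, and your proposed repair cannot be executed inside the formalism of ${[G]}_n$: the extension $0\to I^{i+1}M\to I^iM\to Q\to 0$ requires $Q$ itself, not $\syz_{R/I}Q$, to sit in $[G]$ (note also that $[-]$ is closed under $R$-syzygies, not $R/I$-syzygies), and inserting the extra extension $0\to\syz_{R/I}Q\to(R/I)^{\oplus m}\to Q\to0$ costs one more step per layer, so the count inflates past $n$ --- exactly the bookkeeping problem you anticipate, and it is not fixable along these lines. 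The paper instead filters from the other side: set $M_0=M$ and $M_{i+1}=M_i/(0:_{M_i}I)$. The sub-objects $(0:_{M_i}I)$ are $I$-killed submodules of maximal Cohen--Macaulay $R$-modules, hence (since $d=1$) automatically maximal Cohen--Macaulay over $R/I$ and so in ${[G]}_1$ on the nose; an induction shows $I^{n-i}M_i=0$, so $M_n=0$ and $M\in{[G]}_n$ after exactly $n$ steps.

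In (3), the bootstrap from $\cx_R(R/I)\le1$ to $\cx_Rk\le1$ via the change-of-rings spectral sequence fails. That spectral sequence yields only $\beta_m^R(k)\le\sum_{p+q=m}\beta_q^R(R/I)\,\beta_p^{R/I}(k)$, and finite $\cm$-representation type of $R/I$ does \emph{not} control $\beta_p^{R/I}(k)$: a one-dimensional non-Gorenstein ring of finite $\cm$-representation type such as $k[\![t^3,t^4,t^5]\!]$ has exponentially growing Betti numbers of $k$, so the convolution bound gives nothing close to $\cx_Rk\le1$. The paper's route avoids the residue field of $R/I$ entirely: the indecomposable summands of $C:=\syz_{R/I}^dk$ are maximal Cohen--Macaulay over $R/I$, hence lie in $\ind\cmp(R)$ by Proposition \ref{10}(3), hence have $R$-complexity one by Lemma \ref{41}; feeding the finite exact sequence $0\to C\to(R/I)^{\oplus r}\to\cdots\to R/I\to k\to0$ into Lemma \ref{9}(2), together with $\cx_R(R/I)\le1$, gives $\cx_Rk\le1$ and hence the hypersurface conclusion. (Your verification that $R/I\in\cmp(R)$ is fine, but it is only the first half of the argument.)
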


\begin{proof}
(1) This is a direct consequence of Theorem \ref{19}(2b).

(2) It follows from Theorem \ref{19}(2a) that $R/I$ has finite $\cm$-representation type.
Hence there exists a maximal Cohen--Macaulay $R/I$-module $G$ such that $\cm(R/I)=\add_{R/I}G$.
Take any maximal Cohen--Macaulay $R$-module $M$ and put $M_0:=M$.
For each integer $0\le i\le n-1$ we have an exact sequence $0 \to (0:_{M_i}I) \xrightarrow{f_i} M_i \to M_{i+1} \to 0$, where $f_i$ is the inclusion map.

Let us show that for all $0\le i\le n-1$ the $R$-module $M_i$ is maximal Cohen--Macaulay and annihilated by $I^{n-i}$.
We use induction on $i$.
It clearly holds in the case $i=0$, so let $i\ge1$.
Applying the functor $\Hom_R(-,M_{i-1})$ to the natural exact sequence $0\to I\to R\to R/I\to0$ induces an exact sequence $0 \to (0:_{M_{i-1}}I) \xrightarrow{f_{i-1}} M_{i-1} \to \Hom_R(I,M_{i-1})$, and hence $M_i$ is identified with a submodule of $\Hom_R(I,M_{i-1})$.
The induction hypothesis implies that $M_{i-1}$ is maximal Cohen--Macaulay and $I^{n-i-1}M_{i-1}=0$.
Then $\Hom_R(I,M_{i-1})$ has positive depth (see \cite[Exercise 1.4.19]{BH}), and so does $M_i$.
Since $d=1$ by (1), the $R$-module $M_i$ is maximal Cohen--Macaulay.
Also, $I^{n-i}M_{i-1}$ is contained in $(0:_{M_{i-1}}I)$, which implies that $I^{n-i}$ annihilates $M_{i-1}/(0:_{M_{i-1}}I)=M_i$.

Thus, for each $0\le i\le n-1$ the submodule $(0:_{M_i}I)$ of $M_i$ is also maximal Cohen--Macaulay (as $d=1$ again).
Since it is killed by $I$, it is a maximal Cohen--Macaulay $R/I$-module.
Therefore $(0:_{M_i}I)$ belongs to $\add_RG={[G]}_1$ for all $0\le i\le n-1$.
Using that fact that $M_0=M$ and $M_n=0$, we easily observe that $M$ belongs to ${[G]}_n$.
It is concluded that $\cm(R)={[G]}_n$, which means that $\dim\cm(R)\le n-1$.

(3) We claim that the $R$-module $R/I$ has complexity at most one.
Indeed, we have
$$
\nf_R(R/I)=\v(I+(0:I))=\v(I)\cap\v(0:I)=\v(I),
$$
where the first equality follows from \cite[Proposition 1.15(4)]{stcm}.
As $I$ is not $\m$-primary, $\nf_R(R/I)$ contains a nonmaximal prime ideal of $R$.
Hence $R/I$ is in $\cmp(R)$.
Since $R/I$ is a local ring, it is an indecomposable $R$-module, and therefore $R/I\in\ind\cmp(R)$.
It is seen from Lemma \ref{41} that $R/I$ has complexity at most one as an $R$-module.
Now the claim follows.

Let $X$ be an indecomposable $R/I$-module which is a direct summand of $C:=\syz_{R/I}^dk$.
Proposition \ref{10}(3) implies that $X$ belongs to $\ind\cmp(R)$.
As in the proof of the first claim, $\syz_R^iX$ belongs to $\ind\cmp(R)$ for all $i\ge0$, and $\syz_R^nX$ is periodic for some $n\ge0$.
Therefore, we find an integer $m\ge0$ such that $\syz_R^mC$ is periodic; see Lemma \ref{9}.
This implies that $C$ has complexity at most one.
There is an exact sequence
$$
0 \to C \to (R/I)^{\oplus r_{m-1}} \to \cdots \to (R/I)^{\oplus r_2} \to (R/I)^{\oplus r_1} \to R/I \to k\to0.
$$
As $\cx_RC\le1$ and $\cx_R(R/I)\le1$, we get $\cx_Rk\le1$.
By \cite[Theorem 8.1.2]{A} the ring $R$ is a hypersurface.
The last assertion follows from \cite[Theorem 4.4.1]{B} and \cite[Proposition 3.5(3)]{dim}.
\end{proof}

The above theorem gives rise to the two corollaries below.
Note that the theorem and the two corollaries all give answers to Questions \ref{2} and \ref{61}.

\begin{cor}\label{30}
Let $R$ be a Cohen--Macaulay local ring of dimension $d>0$ possessing an element $x\in R$ with $(0:x)=(x)$.
Suppose that $R$ has finite $\cmp$-representation type.
Then $d=1$ and $\dim\cm(R)\le1$.
If $R$ is Gorenstein, then $R$ is a hypersurface and $\dim\ds(R)\le1$.
\end{cor}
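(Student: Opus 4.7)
The plan is to reduce the corollary directly to Theorem \ref{15} by taking $I = (x)$ and $n = 2$. I need to verify the three hypotheses of that theorem: that $I^2 = 0$, that $\v(I) \subseteq \v(0:I)$, and that $R/I$ is a maximal Cohen--Macaulay $R$-module.

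First, observe that $x \neq 0$ (otherwise $(0:x) = R = (x)$ forces $R = 0$, contradicting $d > 0$). Since $(0:x) = (x)$, we have $x \in (0:x)$, which gives $x^2 = 0$, so $I^2 = (x^2) = 0$. The containment $\v(I) \subseteq \v(0:I)$ is immediate because $I = (x) = (0:x) = (0:I)$, so these two closed sets are actually equal.

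The key verification is that $R/(x)$ is maximal Cohen--Macaulay over $R$. For this I consider the short exact sequence $0 \to (x) \to R \to R/(x) \to 0$ together with the identification $(x) \cong R/(0:x) = R/(x)$, which yields
\[
0 \to R/(x) \to R \to R/(x) \to 0.
\]
Setting $t := \depth R/(x)$ and applying the depth lemma to this sequence, the middle term has depth $d$, so $t \le d$ and simultaneously $t \ge \min(d, t+1)$; if $t < d$ the latter inequality becomes $t \ge t+1$, a contradiction. Hence $t = d$ and $R/(x) \in \cm(R)$.

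With all three hypotheses of Theorem \ref{15} verified for $I = (x)$ and $n = 2$, the conclusions of that theorem deliver exactly what is claimed: $d = 1$, $\dim \cm(R) \le n - 1 = 1$, and if $R$ is Gorenstein then $R$ is a hypersurface with $\dim \ds(R) \le n - 1 = 1$. The only nontrivial step in this reduction is the depth computation for $R/(x)$; everything else is formal manipulation of the hypothesis $(0:x) = (x)$.
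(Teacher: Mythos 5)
Your proof is correct and takes essentially the same route as the paper: reduce to Theorem \ref{15} with $I=(x)$ and $n=2$, the only substantive point being that $R/(x)$ is maximal Cohen--Macaulay. The paper gets that from the exactness of the two-sided complex $\cdots\xrightarrow{x}R\xrightarrow{x}R\xrightarrow{x}\cdots$ (so $R/(x)$ is its own syzygy, hence a $d$-th syzygy), while you use the self-extension $0\to R/(x)\to R\to R/(x)\to0$ and the depth lemma --- an equally valid, essentially equivalent computation.
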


\begin{proof}
We have $x^2=0$.
The sequence $\cdots\xrightarrow{x}R\xrightarrow{x}R\xrightarrow{x}\cdots$ is exact, which implies that $R/(x)$ is a maximal Cohen--Macaulay $R$-module.
The assertions follow from Theorem \ref{15}.
\end{proof}

\begin{cor}\label{42}
Let $R$ be a Gorenstein non-reduced local ring of dimension one.
If $R$ has finite $\cmp$-representation type, then $R$ is a hypersurface.
\end{cor}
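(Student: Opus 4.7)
The plan is to apply Theorem \ref{15}(3) with the ideal $I$ chosen to be a suitable minimal prime of $R$. The key observation unlocking this approach is that although Theorem \ref{15} is framed with an eye towards nilpotent $I$, the ``$R$ is a hypersurface'' conclusion of part (3) does \emph{not} rely on any condition of the form $I^n=0$; its proof proceeds via a complexity argument using only $\v(I)\subseteq \v(0:I)$, the fact that $I$ is not $\m$-primary, the Gorenstein hypothesis, and finite $\cmp$-representation type. This freedom is essential, since for basic examples such as $R=k[\![x,y]\!]/(xy^2)$ no nonzero nilpotent ideal $I$ satisfies $\v(I)\subseteq \v(0:I)$, yet the corollary must still hold there.

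I first locate the right minimal prime. Because $R$ is Cohen--Macaulay of dimension one, it automatically satisfies Serre's condition $(S_1)$. By Serre's criterion for reducedness, the conjunction $(S_1)+(R_0)$ is equivalent to reducedness, so the hypothesis that $R$ is non-reduced forces $(R_0)$ to fail: there exists $\p\in\Min R$ for which $R_\p$ is not a field.

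I then verify that $I:=\p$ meets the hypotheses of Theorem \ref{15}(3). The quotient $R/\p$ is a one-dimensional domain and hence is maximal Cohen--Macaulay as an $R$-module, while $\p\ne\m$ (since $\dim R=1$), which also ensures $\p$ is not $\m$-primary. The remaining condition $\v(\p)\subseteq\v(0:\p)$, equivalent to $(0:\p)\subseteq\sqrt{\p}=\p$, follows by a short localization argument: if there were $r\in(0:\p)\setminus\p$, then $r$ would be a unit in $R_\p$ while annihilating $\p R_\p$, forcing $\p R_\p=0$ and making $R_\p$ a field, contrary to the choice of $\p$.

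With these checks in hand, Theorem \ref{15}(3) at once yields that $R$ is a hypersurface. The only delicate point is the initial observation that the hypersurface part of Theorem \ref{15}(3) does not require nilpotency of $I$; once that is accepted, everything reduces to a routine use of Serre's reducedness criterion in dimension one.
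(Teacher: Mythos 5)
Your proposal is correct and follows essentially the same route as the paper: the paper likewise produces a nonmaximal (hence minimal) prime $\p$ with $R_\p$ not a field, checks $\v(\p)\subseteq\v(0:\p)$ (via $\supp_R(\p)=\v(0:\p)$ and $\p R_\p\ne0$, equivalent to your localization argument), and invokes the hypersurface conclusion of Theorem \ref{15}(3), whose proof indeed uses only Lemma \ref{41} and not the nilpotency of $I$. Your reading of Theorem \ref{15}(3) is exactly the one the paper itself relies on here.
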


\begin{proof}
Since $R$ does not have an isolated singularity, $\sing R$ contains a nonmaximal prime ideal $\p$.
It is easy to see that $(R/\p)_\p=\kappa(\p)$ is not $R_\p$-free, and we also have $\v(\p)=\{\p,\m\}\subseteq\supp_R(\p)=\v(0:\p)$ as $\p R_\p\ne0$.
Lemma \ref{41} implies that the $R$-module $R/\p$ has complexity at most $1$, and the local ring $R$ is a hypersurface by virtue of Theorem \ref{15}(3).
\end{proof}

\section{The one-dimensional hypersurfaces of finite CM$_+$-representation type}\label{m}

The purpose of this section is to prove the following theorem.

\begin{thm}\label{46}
Let $R$ be a homomorphic image of a regular local ring.
Suppose that $R$ does not have an isolated singularity but is Gorenstein.
If $\dim R=1$, then the following are equivalent.
\begin{enumerate}[\rm(1)]
\item
The ring $R$ has finite $\cmp$-representation type.
\item
There exist a regular local ring $S$ and a regular system of parameters $x,y$ such that $R$ is isomorphic to $S/(x^2)$ or $S/(x^2y)$.
\end{enumerate}
When either of these two conditions holds, the ring $R$ has countable $\cm$-representation type.
\end{thm}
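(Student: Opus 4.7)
My plan is to prove the two implications and the countable-$\cm$ statement together, combining a direct analysis of the specific hypersurfaces in (2) with the structural results from the preceding sections.

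For $(2)\Rightarrow(1)$ and the countable-$\cm$ claim, the rings $S/(x^2)$ and $S/(x^2y)$ are the one-dimensional $(\A_\infty)$- and $(\D_\infty)$-hypersurfaces. Their indecomposable maximal Cohen--Macaulay modules admit an explicit classification via matrix factorizations, valid over any coefficient field: in each case one obtains countably many isomorphism classes in $\ind\cm(R)$ and only finitely many in $\ind\cmp(R)$. This is essentially the content of Theorem~\ref{5}(1) applied with the explicit MCM module lists recorded in \cite[Proposition 2.1]{hsccm}.

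For $(1)\Rightarrow(2)$, I would first note that a one-dimensional Gorenstein local ring fails to have an isolated singularity precisely when it is non-reduced, so Corollary~\ref{42} applies and $R$ is a hypersurface; together with the hypothesis that $R$ is a homomorphic image of a regular local ring, this yields a presentation $R\cong S/(f)$ with $(S,\n)$ a two-dimensional regular local ring and nonzero $f\in\n$. Corollary~\ref{39} then restricts $f$ to one of the three forms $p^2$, $p^2q$, or $p^2qr$, with the corresponding reduction $S/(p)$, $S/(pq)$, or $S/(pqr)$ of finite $\cm$-representation type.

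The remaining task is to eliminate $f=p^2qr$ and, in the other two forms, to force $p$ (respectively $p,q$) to be a regular parameter (respectively a regular system of parameters), so that a coordinate change on $S$ brings $f$ to $x^2$ or $x^2y$. The guiding invariant is the multiplicity $\e(R)=\operatorname{ord}_\n(f)$, which equals $2$ for $(\A_\infty)$ and $3$ for $(\D_\infty)$; the form $p^2qr$ forces $\e(R)\ge 4$, and any occurrence of $p$ or $q$ in $\n^2$ likewise inflates $\e(R)$. The main obstacle I foresee is establishing the multiplicity bound $\e(R)\le 3$ from finite $\cmp$-representation type. I would attempt this by passing to the completion $\widehat R$, which inherits finite $\cmp$-representation type, reducing via faithfully flat base change to the case of an uncountable algebraically closed coefficient field of characteristic not~$2$, and invoking the Buchweitz--Greuel--Schreyer classification \cite[Theorem B]{BGS} together with Corollary~\ref{16}(1) to conclude that $\widehat R$ must be an $(\A_\infty)$- or $(\D_\infty)$-hypersurface. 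Cohen's structure theorem, applied carefully using the irreducible-factor data from Corollary~\ref{39}, then lets us descend this isomorphism back to the original $R$.
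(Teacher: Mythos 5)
Your treatment of $(2)\Rightarrow(1)$ and the countable-$\cm$ assertion matches the paper: both follow from the explicit matrix-factorization classification of $\ind\cm$ over $S/(x^2)$ and $S/(x^2y)$ in \cite[Propositions 4.1 and 4.2]{BGS} together with \cite[Proposition 2.1]{hsccm}, and your opening reductions for $(1)\Rightarrow(2)$ (Corollary \ref{42} to get a hypersurface $S/(f)$, then Corollary \ref{39} to force $f\in\{p^2,p^2q,p^2qr\}$) are exactly the paper's. The problem is the last step, which is where all the real work of Section \ref{m} lives and which your plan does not supply.

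Your proposed route --- complete, enlarge the residue field, and invoke Buchweitz--Greuel--Schreyer --- fails for three independent reasons. First, it is circular: \cite[Theorem B]{BGS} characterizes \emph{countable $\cm$-representation type}, not finite $\cmp$-representation type, so to conclude from hypothesis (1) that $\widehat R$ is an $(\A_\infty)$- or $(\D_\infty)$-singularity you would already need the implication ``finite $\cmp$-type $\Rightarrow$ countable $\cm$-type,'' which is precisely the content of Conjecture \ref{conj11} that Theorem \ref{46} is meant to establish in dimension one; Corollary \ref{16}(1) adds nothing here, since $\dim\sing R\le1$ is automatic for any one-dimensional ring. Second, the base-change steps are unjustified: neither ascent of finite $\cmp$-representation type along $R\to\widehat R$ nor its preservation under residue-field extension is proved anywhere in the paper, and such statements are delicate even for finite $\cm$-type. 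Third, no base change can alter the residue characteristic, so the BGS classification is simply unavailable when the residue field has characteristic $2$, whereas Theorem \ref{46} carries no characteristic hypothesis. The paper avoids all of this by proving Theorems \ref{32}, \ref{33} and \ref{34} directly: for each of the ``bad'' shapes ($f=p^2$ with $p\in\n^2$, $f=p^2qr$, and $f=p^2q$ with $p$ or $q$ in $\n^2$) it writes down an explicit infinite family of matrix factorizations $(A_i,B_i)$, checks by localizing at the relevant minimal prime that the cokernels lie in $\cmp(R)$, distinguishes them by Fitting ideals, and rules out decomposability through a careful analysis of possible cyclic direct summands (Lemmas \ref{24}, \ref{25}, \ref{38}). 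That construction is characteristic-free and requires no classification theorem; without it, or an equally explicit substitute, your argument does not close.
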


In fact, the last assertion and the implication $(2)\Rightarrow(1)$ follow from \cite[Propositions 4.1 and 4.2]{BGS} and \cite[Proposition 2.1]{hsccm}, respectively.
The implication $(1)\Rightarrow(2)$ is an immediate consequence of the combination of Corollaries \ref{39}, \ref{42} with Theorems \ref{32}, \ref{33}, \ref{34} shown in this section.
Note by Theorem \ref{5} that the above theorem guarantees that under the assumption that $R$ is a complete Gorenstein local ring of dimension one, Question \ref{61} has an affirmative answer.

We establish three subsections, whose purposes are to prove Theorems \ref{32}, \ref{33} and \ref{34}, respectively.

\subsection{The hypersurface $S/(p^2)$}

For a ring $A$ we denote by $\nzd(A)$ the set of non-zerodivisors of $A$, and by $\Q(A)$ the total quotient ring $A_{\nzd(A)}$ of $A$.
A ring extension $A\subseteq B$ is called {\em birational} if $B\subseteq\Q(A)$.

\begin{lem}\label{31}
Let $A\subseteq B$ be a birational extension.
Let $M$ be a $B$-module which is torsion-free as an $A$-module.
If $M$ is indecomposable as a $B$-module, then $M$ is indecomposable as an $A$-module as well.
\end{lem}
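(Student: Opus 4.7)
The plan is to show that any $A$-module direct summand of $M$ is automatically a $B$-submodule, so an $A$-decomposition forces a $B$-decomposition, contradicting indecomposability over $B$. Concretely, I would assume an $A$-module decomposition $M=M_1\oplus M_2$ and let $e\in\End_A(M)$ be the idempotent projecting onto $M_1$. The goal is to prove that $e$ commutes with multiplication by every $b\in B$.

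To do this I would use birationality to write $b=a/s$ with $a\in A$ and $s\in\nzd(A)$. This equation holds in $Q(A)$, and since $M$ is a $B$-module the identity $s\cdot(bm)=am$ holds in $M$ for every $m\in M$. Applying $e$ and using $A$-linearity gives
\[
s\cdot e(bm)=e(s\cdot bm)=e(am)=a\cdot e(m),
\]
and the same calculation applied to $e(m)$ in place of $m$ yields $s\cdot(b\cdot e(m))=a\cdot e(m)$. Subtracting, $s\cdot\bigl(e(bm)-b\cdot e(m)\bigr)=0$. Here the hypothesis that $M$ is $A$-torsion-free enters crucially: multiplication by the non-zerodivisor $s$ is injective on $M$, so $e(bm)=b\cdot e(m)$. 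Thus $e$ is $B$-linear, $M_1=eM$ and $M_2=(1-e)M$ are $B$-submodules, and $M=M_1\oplus M_2$ as $B$-modules. By $B$-indecomposability one of the two summands must vanish, so $M$ is already $A$-indecomposable.

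There is no real obstacle beyond bookkeeping; the only delicate point is making sure the rewrite $b=a/s$ is used legitimately, which is where the birationality assumption $B\subseteq Q(A)$ together with $A$-torsion-freeness of $M$ combine to justify cancelling $s$. If one wanted to avoid choosing representatives $a/s$ one could instead tensor $M$ with $Q(A)$, use that $M\hookrightarrow M\otimes_A Q(A)$ by torsion-freeness, extend $e$ to a $Q(A)$-linear idempotent on $M\otimes_A Q(A)$, and restrict back; but the elementary approach above is shorter.
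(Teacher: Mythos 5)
Your proposal is correct and is essentially the paper's argument made self-contained: the computation showing that an $A$-linear idempotent $e$ commutes with multiplication by each $b=a/s\in B\subseteq\Q(A)$ (using torsion-freeness to cancel the non-zerodivisor $s$) is precisely a direct proof of the identification $\End_A(M)=\End_B(M)$, which the paper instead delegates to the proof of \cite[Proposition 4.14]{LW} before invoking the idempotent criterion for indecomposability from \cite[Proposition 1.1]{LW}. No gaps; the one point you rightly flag, the legitimacy of writing $b=a/s$ and cancelling $s$ on $M$, is exactly where birationality and torsion-freeness are used.
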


\begin{proof}
From the proof of \cite[Proposition 4.14]{LW}, we have $\End_A(M)=\End_B(M)$.  The claim then follows from from \cite[Proposition 1.1]{LW}.
\end{proof}

Let $A$ be a ring and $M$ an $A$-module.
We denote by $\lend_A(M)$ the quotient of $\End_A(M)$ by the endomorphisms factoring through projective $A$-modules.
For a flat $A$-algebra $B$ one has $\lend_A(M)\otimes_AB\cong\lend_B(M\otimes_AB)$; this can be shown by using \cite[Lemma 3.9]{Y}.

\begin{lem}\label{29}
Let $A\subseteq B$ be a finite birational extension of $1$-dimensional Cohen--Macaulay local rings.
Then $\ind\cmp(B)$ is contained in $\ind\cmp(A)$.
\end{lem}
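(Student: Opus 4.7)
My plan is to take an arbitrary $M\in\ind\cmp(B)$ and establish the three properties required for $M\in\ind\cmp(A)$: maximal Cohen--Macaulayness over $A$, indecomposability over $A$, and failure of local freeness on the punctured spectrum of $A$. The first is immediate: $M$ is $B$-torsion-free (being MCM over the $1$-dimensional Cohen--Macaulay ring $B$), the birational inclusion $B\subseteq\Q(A)$ forces $\nzd(A)\subseteq\nzd(B)$, so $M$ is $A$-torsion-free, and hence MCM over the $1$-dimensional Cohen--Macaulay ring $A$. Indecomposability of $M$ as an $A$-module then follows directly from Lemma~\ref{31}.

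For the third property, the key fact I would invoke is that contraction induces a bijection $\Min B\to\Min A$: surjectivity comes from lying over for the integral extension $A\subseteq B$, while the birational hypothesis yields $\Q(A)=\Q(B)$, so both sides biject with the minimal primes of this common artinian total quotient ring, forcing equal cardinality. By uniqueness of the decomposition of an artinian Noetherian ring into local components, this matching of primes produces an identification $A_\p=B_\q$ of rings whenever $\q\in\Min B$ and $\p=\q\cap A\in\Min A$ correspond. Given a witness $\q$ for $M\in\cmp(B)$ (i.e., with $M_\q$ non-free over $B_\q$), I would then compute $M_\p=M\otimes_B(B\otimes_A A_\p)=M\otimes_B B_\q=M_\q$, where $B\otimes_A A_\p=B_\q$ because $\q$ is the unique prime of $B$ lying over $\p$. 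The identification $A_\p=B_\q$ then transports non-freeness from $B_\q$ to $A_\p$, yielding $M\in\cmp(A)$.

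The main obstacle will be the careful setup of the identification $A_\p=B_\q$ via matching of local decompositions of $\Q(A)=\Q(B)$. Once this is in hand, the rest of the argument is a routine assembly of Lemma~\ref{31} and standard properties of $1$-dimensional Cohen--Macaulay local rings.
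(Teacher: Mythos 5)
Your argument is correct, but it reaches the conclusion $M\in\cmp(A)$ by a genuinely different route than the paper. The paper also begins with $\depth_AM=\depth_BM>0$ and Lemma \ref{31}, but for the key step it works with the stable endomorphism ring: setting $Q=\Q(A)=\Q(B)$, it uses the flat base-change isomorphism $\lend_A(M)\otimes_AQ\cong\lend_Q(M\otimes_BQ)$ together with $Q_P=B_P$ for a minimal prime $P$ of $B$ witnessing non-freeness, concludes that $\lend_A(M)$ is a non-torsion $A$-module, and hence that its support (which is the nonfree locus) meets $\Min A$. You instead carry out the prime-spectrum bookkeeping explicitly: contraction gives a bijection $\Min B\to\Min A$ because both sets are identified with $\spec Q$ for the common artinian total quotient ring $Q$, and for corresponding primes $\q\mapsto\p$ one has $A_\p\cong Q_{\q Q}\cong B_\q$ compatibly with $A\to B$, so non-freeness of $M_\q$ transports to $M_\p$. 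Your approach is more elementary (no $\lend$ and no base-change lemma for it), at the price of verifying that $\q$ is the unique prime of $B$ over $\p$ (true here since every prime of the one-dimensional local ring $B$ other than $\m_B$ is minimal and $\m_B$ contracts to $\m_A\ne\p$) so that $B\otimes_AA_\p=B_\q$; the paper's approach packages all of this into the single statement that a non-torsion module is supported at some minimal prime. Your justification of maximal Cohen--Macaulayness via $\nzd(A)\subseteq\nzd(B)$ and torsion-freeness is also fine, though the paper's one-line $\depth_AM=\depth_BM$ (depth is insensitive to a finite ring extension) is shorter. Both proofs are complete and correct.
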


\begin{proof}
Let $M\in\ind\cmp(B)$.
Then $\depth_AM=\depth_BM>0$, which shows that $M$ is maximal Cohen--Macaulay as an $A$-module.
Lemma \ref{31} implies $M\in\ind\cm(A)$.
Set $Q=\Q(A)=\Q(B)$.
Applying the functor $Q\otimes_A-$ to the inclusions $A\subseteq B\subseteq Q$ yields $B\otimes_AQ=Q$.
Hence we have
$$
M\otimes_BQ=M\otimes_B(B\otimes_AQ)=M\otimes_AQ,\quad
\lend_A(M)\otimes_AQ
\cong\lend_Q(M\otimes_AQ)
\cong\lend_Q(M\otimes_BQ).
$$
Since $M$ is in $\cmp(B)$, there is a minimal prime $P$ of $B$ such that $M_P$ is not $B_P$-free.
Note that $M_P=(M\otimes_BQ)\otimes_QQ_P$ and $Q_P=B_P$.
Hence $M\otimes_BQ$ is not $Q$-projective, and we obtain $\lend_Q(M\otimes_BQ)\ne0$.
Therefore $\lend_A(M)\otimes_AQ$ is nonzero, which means that the $A$-module $\lend_A(M)$ is not torsion.
Thus $\supp_A(\lend_A(M))$ contains a minimal prime of $A$, which implies that $M$ belongs to $\cmp(A)$.
Consequently, we obtain $M\in\ind\cmp(A)$, and the lemma follows.
\end{proof}

The following lemma is a consequence of \cite[Corollary 7.6]{Y}, which is used not only now but also later.

\begin{lem}\label{20}
Let $(S,\n)$ be a regular local ring and $x\in\n$, and set $R=S/(x)$.
Then
$$
\{M\in\cm(R)\mid\text{$M$ is cyclic}\}/_{\cong}=
\{R/yR\mid\text{$y\in S$ with $x\in yS$}\}/_{\cong}.
$$
In particular, there exist only finitely many nonisomorphic indecomposable cyclic maximal Cohen--Macaulay $R$-modules.
\end{lem}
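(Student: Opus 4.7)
The plan relies on the hypersurface matrix-factorization classification. By \cite[Corollary 7.6]{Y}, every cyclic maximal Cohen--Macaulay module over $R = S/(x)$ arises from a rank-one matrix factorization $(y,z)$ of $x$ over $S$ (i.e., $yz = x$) as the module $R/yR$, and conversely any such $y$ produces such a module. Since the condition ``$yz = x$ for some $z \in S$'' is precisely $x \in yS$, this identifies the two sets in the claimed equality, using that $R/yR = S/((y)+(x)) = S/(y)$ when $y \mid x$. Alternatively, one can verify this directly: a cyclic $R$-module has the form $S/J$ with $(x) \subseteq J$, its depth over $R$ equals its depth over $S$ (as $x$ annihilates it), and the Auslander--Buchsbaum formula reduces the maximal Cohen--Macaulay condition to $\pd_S(S/J) \le 1$, which for a cyclic module over the regular local ring $S$ forces $J = (y)$ to be principal and generated by a non-zerodivisor $y$ with $x \in yS$.

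For the ``in particular'' statement, I would invoke the Auslander--Buchsbaum theorem that $S$ is a unique factorization domain, so $x$ has only finitely many divisors up to associates, and therefore only finitely many principal ideals $(y) \supseteq (x)$ in $S$. Since $R/yR \cong R/y'R$ iff $(y) = (y')$ as ideals of $S$, only finitely many isomorphism classes of cyclic maximal Cohen--Macaulay $R$-modules arise; each nonzero one is indecomposable since its endomorphism ring $R/yR$ is local. No substantive obstacle appears: the main content is the cited rank-one classification over a hypersurface, and finiteness drops out immediately from unique factorization in $S$.
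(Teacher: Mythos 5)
Your proposal is correct and follows essentially the same route as the paper, which proves the lemma simply by citing \cite[Corollary 7.6]{Y} (the classification of cyclic maximal Cohen--Macaulay modules over a hypersurface via $1\times1$ matrix factorizations $yz=x$), with finiteness coming from unique factorization in $S$. Your alternative direct argument via the Auslander--Buchsbaum formula is also valid, but adds nothing beyond the cited classification.
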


Now we can achieve the purpose of this subsection.

\begin{thm}\label{32}
Let $(S,\n)$ be a regular local ring of dimension two, and let $p\in\n^2$ be an irreducible element.
Then $R=S/(p^2)$ has infinite $\cmp$-representation type.
\end{thm}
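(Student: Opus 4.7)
The plan is to prove Theorem \ref{32} by splitting into cases based on the $\cm$-representation type of $S/(p)$. First, I observe that in $R=S/(p^2)$ the image $\bar p\in R$ satisfies $\ann_R(\bar p)=\bar p R$ (since $p$ is prime in the UFD $S$), so the ideal $I:=\bar p R$ satisfies $0:_R I = I$ and in particular $\v(I)\subseteq\v(0:_R I)$. Moreover $R/I\cong S/(p)$ is maximal Cohen--Macaulay over $R$, as both rings are $1$-dimensional Cohen--Macaulay. Proposition \ref{10}(3) therefore yields
\[
\ind\cm(S/(p))\;\subseteq\;\ind\cmp(R).
\]
So if $S/(p)$ has infinite $\cm$-representation type, the theorem follows at once. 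Henceforth assume $S/(p)$ has finite $\cm$-representation type, so that (by Herzog's theorem together with the Buchweitz--Greuel--Schreyer classification) $S/(p)$ is a $1$-dimensional ADE curve singularity of multiplicity $2$ or $3$.

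In this remaining case, my plan is to build a finite birational extension $B\supseteq R$ that has infinite $\cmp$-representation type, then invoke Lemma \ref{29} to transfer this infiniteness back to $R$. Choose $y\in\n$ such that $\{p,y\}$ is a regular system of parameters of $S$ (possible since $p\in\n^2$ is irreducible), and inside the total quotient ring $\Q(R)=S_{(p)}/(p^2)$ set
\[
B \;:=\; R+R\cdot(\bar p/\bar y).
\]
Since $(\bar p/\bar y)^2=0$, the element $\bar p/\bar y$ is integral over $R$, so $B$ is a finite birational extension (generated over $R$ by two elements). It is a local Cohen--Macaulay ring of dimension $1$, with presentation $B\cong S[T]_{(\n,T)}/(yT-p,\,T^2)$. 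By Lemma \ref{29}, it suffices to prove that $B$ has infinite $\cmp$-representation type.

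The main obstacle is the analysis of $B$ itself. Computing in $B$, one finds $\ann_B(T)=(T)$ (the relation $yT=p$ forces $\bar p\in (T)B$), so the ideal $J:=(T)\subset B$ satisfies $\v(J)\subseteq\v(0:_B J)$, and $B/J\cong S/(p)$ is maximal Cohen--Macaulay over $B$. However $S/(p)$ still has finite $\cm$-representation type in this case, so Theorem \ref{19} does not conclude infinite $\cmp$-representation type for $B$ directly. To finish, I would iterate the birational-extension construction: form $B':=B+B\cdot(T/\bar y)\subseteq\Q(B)$, using that $(T/\bar y)^2=0$, and continue to obtain a tower $R\subseteq B\subseteq B'\subseteq\cdots$ of finite birational extensions whose embedding dimensions grow. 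At each step, Lemma \ref{20} limits the cyclic maximal Cohen--Macaulay modules to pullbacks of divisors of $p^2$ in $S$, while the growing codepth forces increasingly many non-cyclic indecomposable maximal Cohen--Macaulay modules in the $\cmp$-part. The hardest technical point, which I expect to be the real obstacle, is to verify that the indecomposables constructed at each level genuinely land in $\cmp$ (not merely $\cmz$) after pulling back through the chain of birational extensions via repeated application of Lemmas \ref{29} and \ref{31}; this requires a careful control of the local behaviour at the (unique) minimal prime of each ring in the tower.
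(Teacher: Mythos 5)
Your first reduction (via Proposition \ref{10}(3), since $(0:_R\bar p)=\bar pR$) is fine but unnecessary, and your auxiliary ring $B=R+R\cdot(\bar p/\bar y)\cong S[\![z]\!]/(yz-p,z^2)$ is exactly the ring the paper constructs (with $t=y$). The genuine gap is in how you try to show $B$ has infinite $\cmp$-representation type. You correctly observe that $(0:_B z)=(z)$, but then you only think to feed the quotient $B/(z)\cong S/(p)$ into Theorem \ref{19}, notice that this fails when $S/(p)$ has finite $\cm$-representation type, and fall back on an uncarried-out iteration of birational extensions (``growing codepth forces increasingly many non-cyclic indecomposables'') --- which is a hope, not an argument, as you yourself concede. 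As written, the proof does not close.

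The missing idea is to apply Corollary \ref{30} to $B$ itself rather than Theorem \ref{19} to its quotient. Because $p\in\n^2$, the two relations $yz-p$ and $z^2$ both lie in the square of the maximal ideal of $S[\![z]\!]$, so $B$ is a one-dimensional complete intersection of codimension $2$; in particular $B$ is Gorenstein but \emph{not} a hypersurface. Since $B$ possesses the element $z$ with $(0:_Bz)=(z)$, Corollary \ref{30} forces any such Gorenstein ring of finite $\cmp$-representation type to be a hypersurface; hence $B$ has infinite $\cmp$-representation type, and Lemma \ref{29} (together with the birationality you already set up) transfers this back to $R$. Note this is precisely where the hypothesis $p\in\n^2$ enters: it is what makes $B$ fail to be a hypersurface. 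With this substitution your case split on the representation type of $S/(p)$ becomes superfluous, and the tower $R\subseteq B\subseteq B'\subseteq\cdots$ is not needed.
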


\begin{proof}
Take any element $t\in\n$ that is regular on $R$.
We consider the $S$-algebra $T=S[z]/(tz-p,z^2)$, where $z$ is an indeterminate over $S$.
We establish two claims.

\setcounter{claim}{0}
\begin{claim}\label{27}
The ring $T$ is a local complete intersection of dimension $1$ and codimension $2$ with $t$ being a system of parameters.
\end{claim}

\begin{proof}[Proof of Claim]
It is clear that $T=S[\![z]\!]/(tz-p,z^2)$, which shows that $T$ is a local ring, and $\dim T=\dim S[\![z]\!]-\height(tz-p,z^2)\ge3-2=1$ by Krull's Hauptidealsatz.
We have $T/tT=S[\![z]\!]/(t,p,z^2)=(S/(t,p))[\![z]\!]/(z^2)$.
As $S/(t,p)$ is artinian, so is $T/tT$.
Hence $\dim T=1$ and $t$ is a system of parameters of $T$, and thus $T$ is a complete intersection (the equalities $\dim S[\![z]\!]=3$ and $\dim T=1$ imply $\height(tz-p,z^2)=2$, whence $tz-p,z^2$ is a regular sequence).
As $(tz-p,z^2)\subseteq\n^2$, the local ring $T$ has codimension $2$.
\renewcommand{\qedsymbol}{$\square$}
\end{proof}

\begin{claim}\label{28}
The ring $R$ is naturally embedded in $T$, and this embedding is a finite birational extension.
\end{claim}

\begin{proof}[Proof of Claim]
Let $\phi:S\to T$ be the natural map and put $I=\ker\phi$.
As $p^2=t^2z^2=0$ in $T$, we have $(p^2)\subseteq I$.
Hence the map $\phi$ factors as $S\twoheadrightarrow R\twoheadrightarrow S/I\hookrightarrow T$.
It is seen that $T$ is an $R$-module generated by $1,z$ and $S/I$ is an $R$-submodule of $T$.
Since $T$ has positive depth by Claim \ref{27}, so does $S/I$.
Thus $S/I$ is a maximal Cohen--Macaulay cyclic module over the hypersurface $R$, and Lemma \ref{20} implies that $I$ coincides with either $(p)$ or $(p^2)$.
If $I=(p)$, then $T=T/pT=S[z]/(tz,p,z^2)$, which contradicts the fact following from Claim \ref{27} that $t$ is $T$-regular.
We get $I=(p^2)$, which means the map $R\to T$ is injective.

Let $C$ be the cokernel of the injection $R\hookrightarrow T$.
Then $C$ is generated by $z$ as an $R$-module.
Note that $tz=p=0$ in $C$.
Hence $C$ is a torsion $R$-module, which means $C\otimes_R\Q(R)=0$.
Thus $(\Q(R)\to T\otimes_R\Q(R))=(R\hookrightarrow T)\otimes_R\Q(R)$ is an isomorphism, while the natural map $T\to T\otimes_R\Q(R)$ is injective as $T$ is maximal Cohen--Macaulay over $R$ by Claim \ref{27}.
Thus the embedding $R\hookrightarrow T$ is birational.
\renewcommand{\qedsymbol}{$\square$}
\end{proof}

By Claim \ref{27}, the ring $T$ is a complete intersection, which implies that the element $z^2$ is regular on the ring $S[z]/(tz-p)$ and so is $z$.
It is easy to check that $(0:_Tz)=zT$.
Claim \ref{27} also guarantees that $T$ is not a hypersurface.
It follows from Corollary \ref{30} that $T$ has infinite $\cmp$-representation type.
Combining Claim \ref{28} with Lemma \ref{29}, we obtain the inclusion $\ind\cmp(T)\subseteq\ind\cmp(R)$.
We now conclude that $R$ has infinite $\cmp$-representation type, and the proof of the theorem is completed.
\end{proof}

\subsection{The hypersurface $S/(p^2qr)$}

\begin{setup}
Throughout this subsection, let $(S,\n)$ be a $2$-dimensional regular local ring and $p,q,r$ pairwise distinct irreducible elements of $S$.
Let $R=S/(p^2qr)$ be a local hypersurface of dimension $1$.
Setting $\p=pR$, $\q=qR$, $\r=rR$ and $\m=\n R$, one has $\spec R=\{\p,\q,\r,\m\}$.
For each $i\ge1$ we define matrices
$$
A_i=
\begin{pmatrix}
p&0&r^i\\
0&pq&p\\
0&0&pr
\end{pmatrix},\qquad
B_i=
\begin{pmatrix}
pqr&0&-qr^i\\
0&pr&-p\\
0&0&pq
\end{pmatrix}
$$
over $S$.
Put $M_i=\cok_SA_i$ and $N_i=\cok_SB_i$.
\end{setup}

\begin{lem}\label{23}
\begin{enumerate}[\rm(1)]
\item
For every $i\ge1$ it holds that $M_i,N_i\in\cmp(R)$, $\syz_RM_i=N_i$ and $\syz_RN_i=M_i$.
\item
For all positive integers $i\ne j$, one has $M_i\ncong M_j$ and $N_i\ncong N_j$ as $R$-modules.
\end{enumerate}
\end{lem}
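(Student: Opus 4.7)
The plan is to treat $(A_i,B_i)$ as a matrix factorization of $f=p^2qr$ over $S$. A direct multiplication check yields $A_iB_i=B_iA_i=f\cdot E$, so Eisenbud's matrix factorization theorem produces a minimal $2$-periodic free resolution
\[
\cdots\to R^3\xrightarrow{B_i}R^3\xrightarrow{A_i}R^3\xrightarrow{B_i}R^3\xrightarrow{A_i}R^3\to M_i\to0
\]
over $R=S/(f)$. This simultaneously shows that $M_i$ and $N_i$ are maximal Cohen--Macaulay $R$-modules and that $\syz_RM_i\cong N_i$ and $\syz_RN_i\cong M_i$. By Lemma \ref{8} it then suffices to establish $M_i\in\cmp(R)$ in order to conclude (1); the corresponding statement for $N_i$ follows automatically via the syzygy relation.

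To see this I localize at the height-one prime $\p=pR$. Since $q$ and $r$ become units in $R_\p$, the equation $p^2qr=0$ reduces to $p^2=0$, so $R_\p\cong S_\p/(p^2)$ is an Artinian local ring of $S_\p$-length $2$. Performing row and column operations on $A_i$ over $R_\p$ (using that $q,r,r^i$ are invertible and $p^2=0$), I reduce $A_i$ to a matrix whose cokernel is $R_\p\oplus R_\p/(p)$, which has $S_\p$-length $3$. Since every free $R_\p$-module has even $S_\p$-length $2n$, $(M_i)_\p$ is not $R_\p$-free, so $\p\in\nf_R(M_i)$ and $M_i\in\cmp(R)$; Lemma \ref{8} then gives $N_i\in\cmp(R)$ as well.

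For (2) I use Fitting ideals. Expanding the nine $2\times2$ minors of $A_i$ gives $I_2(A_i)=(p^2,p^2q,p^2r,p^2qr,pqr^i)=(p^2,pqr^i)$ in $S$, hence $\fitt_1(M_i)=(p^2,pqr^i)R$. For $i<j$ the inclusion $\fitt_1(M_j)\subseteq\fitt_1(M_i)$ is evident. If equality held, lifting to $S$ would yield an identity $pqr^i=\alpha p^2+\beta pqr^j+\gamma p^2qr$ for some $\alpha,\beta,\gamma\in S$, which rearranges to $r^i(1-\beta r^{j-i})=p(\alpha+\gamma qr)$. Since $p,q,r$ are pairwise non-associate irreducibles in the UFD $S$, this forces $p\mid 1-\beta r^{j-i}$; but $\beta r^{j-i}\in\n$, so $1-\beta r^{j-i}$ is a unit, a contradiction. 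Hence $M_i\not\cong M_j$. A parallel minor computation gives $\fitt_1(N_i)=(pqr^{i+1})R$, and the same UFD argument separates the $N_i$'s.

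The main obstacle is this final step: because $R$ is not a domain, the Fitting-ideal comparison must be pulled back to $S$, the extra $(p^2qr)$-term coming from the lift must be carried honestly, and the contradiction must be extracted from unique factorization together with the fact that every element of the form $1-(\text{something in }\n)$ is a unit. The remaining ingredients are routine matrix-factorization and matrix-reduction bookkeeping.
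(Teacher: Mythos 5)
Your proposal is correct and follows essentially the same route as the paper: part (1) via the matrix factorization $A_iB_i=B_iA_i=p^2qrE$ and the computation $(M_i)_\p\cong R_\p\oplus\kappa(\p)$ at $\p=pR$, and part (2) via Fitting invariants. The only (harmless) differences are that you use $\fitt_1(M_i)=\I_2(A_i)=(p^2,pqr^i)$ where the paper uses the simpler $\fitt_2(M_i)=\I_1(A_i)=(p,r^i)$ and reduces modulo the prime $(p)$, that you separate the $N_i$ directly by $\fitt_1(N_i)$ rather than via the syzygy relation, and that your displayed rearrangement drops a factor of $q$ (it should read $qr^i(1-\beta r^{j-i})=p(\alpha+\gamma qr)$), which does not affect the conclusion.
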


\begin{proof}
(1) It is clear that $A_iB_i=B_iA_i=p^2qrE$.
Hence $A_i,B_i$ give a matrix factorization of $p^2qr$ over $S$, and we have $M_i,N_i\in\cm(R)$, $\syz_RM_i=N_i$ and $\syz_RN_i=M_i$; see \cite[Chapter 7]{Y}.
Note that $q,r$ are units and $p^2=0$ in $R_\p=S_{(p)}/p^2S_{(p)}$.
There are isomorphisms
$$
(M_i)_\p
\cong\cok\left(\begin{smallmatrix}
p&0&r^i\\
0&p&p\\
0&0&p
\end{smallmatrix}\right)\cong\cok\left(\begin{smallmatrix}
p&0&1\\
0&p&0\\
0&0&p
\end{smallmatrix}\right)\cong\cok\left(\begin{smallmatrix}
0&0&1\\
0&p&0\\
-p^2&0&p
\end{smallmatrix}\right)\cong\cok\left(\begin{smallmatrix}
0&0&1\\
0&p&0\\
0&0&0
\end{smallmatrix}\right)
\cong\cok\left(\begin{smallmatrix}
p&0\\
0&0
\end{smallmatrix}\right)\cong R_\p\oplus\kappa(\p),
$$
where all the cokernels are over $R_\p$.
Therefore $M_i\in\cmp(R)$, and we get $N_i\in\cmp(R)$ by Lemma \ref{8}.

(2) Suppose that there is an $R$-isomorphism $M_i\cong M_j$.
It then holds that $\fitt_2(M_i)=\fitt_2(M_j)$, which means $(p,r^i)R=(p,r^j)R$.
This implies that $(\overline{r}^i)=(\overline{r}^j)$ in the integral domain $R/\p=S/(p)$.
Since $\overline{r}\ne\overline{0}$ in this ring, we get $i=j$.
If $N_i\cong N_j$, then $M_i\cong\syz_RN_i\cong\syz_RN_j\cong M_j$ by (1), and we get $i=j$.
\end{proof}

\begin{lem}\label{26}
There is an equality
$$
\{M\in\cmp(R)\mid\text{$M$ is cyclic}\}/_{\cong}=\{R/(p),\,R/(pq),\,R/(pr),\,R/(pqr)\}/_{\cong}.
$$
\end{lem}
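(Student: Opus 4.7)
The plan is to combine Lemma \ref{20} with a direct check of local freeness at each of the three nonmaximal primes $\p, \q, \r$. By Lemma \ref{20}, every cyclic maximal Cohen--Macaulay $R$-module is isomorphic to $R/yR$ for some $y\in S$ dividing $p^2qr$. Since $S$ is a UFD (being regular local) and $p, q, r$ are pairwise non-associate irreducibles, such $y$ has the form $y = p^a q^b r^c$ up to a unit, with $0\le a\le 2$ and $b,c\in\{0,1\}$. Distinct triples $(a,b,c)$ give non-associate divisors and hence distinct ideals $yR \subseteq R$, so the resulting quotients $R/yR$ are pairwise non-isomorphic (two cyclic modules $R/I, R/J$ are isomorphic iff $I=J$, and $\ann_R(R/yR)=yR$). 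It remains to decide, for each such $y$, whether $R/yR$ fails to be locally free on the punctured spectrum $\{\p,\q,\r\}$.

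The next step is to identify the three localizations. Because $q, r$ are units in $S_{(p)}$, we have $R_\p = S_{(p)}/(p^2qr) = S_{(p)}/(p^2)$, a non-field local ring in which $p$ is a nonzero nilpotent. Symmetrically, because $p, r$ are units in $S_{(q)}$ and $p, q$ are units in $S_{(r)}$, both $R_\q = S_{(q)}/(q)$ and $R_\r = S_{(r)}/(r)$ are fields. Since every module over a field is free, local freeness of any cyclic module $R/yR$ at $\q$ and at $\r$ holds automatically.

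Hence local non-freeness of $R/yR$ can only occur at $\p$. For $y = p^aq^br^c$, the factor $q^b r^c$ is a unit in $R_\p$, so $(R/yR)_\p = R_\p/(p^a)R_\p$. Since $p^2=0$ in $R_\p$, this equals $R_\p$ when $a=0$ or $a=2$ (both free) and equals $\kappa(\p)$ when $a=1$ (not free, as $R_\p$ is not a field). Therefore $R/yR\in\cmp(R)$ iff $a=1$, which leaves the four admissible triples $(1,0,0), (1,1,0), (1,0,1), (1,1,1)$, corresponding to the ideals $(p),(pq),(pr),(pqr)$; these give precisely the four modules listed on the right-hand side. The argument is essentially a bookkeeping computation, and the only point requiring care is the identification of $R_\p, R_\q, R_\r$ via the UFD structure of $S$, so there is no substantive obstacle.
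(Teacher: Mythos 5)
Your proof is correct and follows essentially the same route as the paper's: invoke Lemma \ref{20} to reduce to quotients $R/yR$ with $y\mid p^2qr$, observe that $R_\q$ and $R_\r$ are fields so that membership in $\cmp(R)$ is decided at $\p$ alone, and use $p^2=0$ in $R_\p=S_{(p)}/p^2S_{(p)}$ to conclude that exactly the divisors with $p$-exponent $1$ qualify. The only difference is that you additionally verify the four modules are pairwise non-isomorphic via annihilators, a point the paper leaves implicit.
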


\begin{proof}
Let $M$ be a cyclic $R$-module with $M\in\cmp(R)$.
It follows from Lemma \ref{20} that $M$ is isomorphic to $R/fR$ for some element $f\in S$ which divides $p^2qr$ in $S$.
The localizations $R_\q,R_\r$ are fields, and hence $M_\p$ is not $R_\p$-free.
As $p^2=0$ in $R_\p=S_{(p)}/p^2S_{(p)}$, it is observed that $f\in pS\setminus p^2S$.
Thus $f\in\{p,pq,pr,pqr\}$.
Conversely, for any $g\in\{p,pq,pr,pqr\}$ we have $(R/gR)_\p\cong\kappa(\p)$ and get $R/gR\in\cmp(R)$.
\end{proof}

\begin{lem}\label{25}
Let $i\ge1$ be an integer.
Then neither $\cok_{S/(pq)}\left(\begin{smallmatrix}
p&r^i\\
0&p
\end{smallmatrix}\right)$ nor $\cok_{S/(pr)}\left(\begin{smallmatrix}
p\\
qr^i
\end{smallmatrix}\right)$ contains $S/(p)$ as a direct summand.
\end{lem}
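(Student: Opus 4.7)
The plan is to show that neither module admits $S/(p)$ as a direct summand by proving that every homomorphism onto $S/(p)$ carries the $p$-torsion submodule into a proper ideal. The two cases are parallel under swapping the roles of $q$ and $r$, so I focus on $T := S/(pq)$ and $M := \cok_T\left(\begin{smallmatrix}p & r^i \\ 0 & p\end{smallmatrix}\right)$, with generators $e_1, e_2$ subject to $p e_1 = 0$ and $r^i e_1 + p e_2 = 0$.

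First I compute $\Hom_T(M, S/(p))$. A homomorphism to $S/(p)$ corresponds to a pair $(x, y) \in S/(p) \times S/(p)$ with $px = 0$ and $r^i x + p y = 0$. Since $S$ is a UFD (being a two-dimensional regular local ring), $p$ is prime, so $S/(p)$ is a domain, and $r \notin (p)$ because $p$ and $r$ are distinct irreducibles. Hence the first equation is automatic and the second forces $x = 0$, so $\Hom_T(M, S/(p))$ is cyclic, generated by the map $\pi_0$ with $\pi_0(e_1) = 0$ and $\pi_0(e_2) = 1$.

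Next I analyze the $p$-torsion submodule $P$ of $M$. If $v = a e_1 + b e_2$ satisfies $pv = 0$, then $(pa, pb)$ lies in the column span of the defining matrix, so $pb = p\beta$ and $pa = p\alpha + r^i \beta$ for some $\alpha, \beta \in T$. Since $\ann_T(p) = qT$, the first equation gives $\beta = b + q\gamma$ for some $\gamma \in T$; substituting into the second and lifting back to $S$ yields $r^i b \in (p, r^i q) S$. Because $r$ is coprime to $p$ in the UFD $S$, this forces $b \in (p, q) S$, so the image of $b$ in $S/(p)$ lies in the proper ideal $(q) S/(p)$ — proper because $q \in \n$ is not a unit in $S/(p)$.

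Combining these steps, every $\pi \in \Hom_T(M, S/(p)) = S/(p) \cdot \pi_0$ sends $P$ into $(q) S/(p)$. If $M$ had a direct summand $M_1 \cong S/(p)$, then $M_1 \subseteq P$ and the corresponding retraction would restrict to an isomorphism $M_1 \to S/(p)$, contradicting $\pi(M_1) \subseteq (q) S/(p) \ne S/(p)$. The argument for the second module proceeds identically with $(T, q)$ replaced by $(U, r)$, where $U := S/(pr)$: the analogue of $\pi_0$ sends $e_1 \mapsto 1$, $e_2 \mapsto 0$; the $p$-torsion analysis (using $\ann_U(p) = rU$) forces the first coordinate of any representative into $(p, r) U$; and the final contradiction becomes that $r$ would be a unit in $S/(p)$. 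The main technical obstacle is the $p$-torsion lifting, where one must shift between $T$ (or $U$) and $S$ to exploit the pairwise coprimality of $p, q, r$ in the UFD.
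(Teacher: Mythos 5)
Your proof is correct, but it follows a genuinely different route from the paper's. For the first module the paper writes down an explicit infinite minimal free resolution of the cokernel over $T=S/(pq)$, uses Betti numbers to force the complementary summand to be cyclic, and then extracts a contradiction from the uniqueness of minimal free resolutions via a commutative diagram whose vertical maps must have unit determinant; for the second module it simply notes that the cokernel has projective dimension one over $S/(pr)$ while $S/(p)$ has infinite projective dimension there. You instead compute $\Hom_T(M,S/(p))$ straight from the presentation, observe that every homomorphism must kill $e_1$ (resp.\ $e_2$) because $S/(p)$ is a domain and $r,q\notin(p)$, and then show that the $p$-torsion submodule --- which necessarily contains any summand isomorphic to $S/(p)$ --- is carried by every such homomorphism into the proper ideal $(q)S/(p)$ (resp.\ $(r)S/(p)$), so no retraction onto such a summand can be surjective. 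Your key torsion computation (passing from $pb=p\beta$ to $\beta\equiv b$ modulo $\operatorname{ann}_T(p)=qT$, substituting, lifting to $S$, and invoking primeness of $p$) is sound. What your approach buys is uniformity and economy: the identical two-step argument disposes of both modules with no free resolutions, no Betti numbers, and no diagram chase; what the paper's approach buys is an essentially formal disposal of the second module via projective dimension. Both arguments are complete.
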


\begin{proof}
(1) Set $T=S/(pq)$ and $C=\cok_T\left(\begin{smallmatrix}
p&r^i\\
0&p
\end{smallmatrix}\right)$.
Consider the sequence
$$
T^{\oplus2}\xleftarrow{\left(\begin{smallmatrix}p&r^i\\0&p\end{smallmatrix}\right)}T^{\oplus2}\xleftarrow{\left(\begin{smallmatrix}q\\0\end{smallmatrix}\right)}T
$$
of homomorphisms of free $T$-modules.
Clearly, this is a complex.
Let $\left(\begin{smallmatrix}a\\b\end{smallmatrix}\right)\in T^{\oplus2}$ be such that $\left(\begin{smallmatrix}p&r^i\\0&p\end{smallmatrix}\right)\left(\begin{smallmatrix}a\\b\end{smallmatrix}\right)=\left(\begin{smallmatrix}0\\0\end{smallmatrix}\right)$.
In $S$ we have $pa+r^ib=pqc$ and $pb=pqd$ for some $c,d\in S$, and get $b=qd$ and $pa+r^iqd=pqc$.
Hence $pa\in qS\in\spec S$ and $a\in qS$; we find $e\in S$ with $a=qe$.
Then $pqe+r^iqd=pqc$, and $pe+r^id=pc$.
Therefore $r^id\in pS\in\spec S$, and $d\in pS$; we find $f\in S$ with $d=pf$ and get $b=qpf$.
In $T^{\oplus2}$ we have $\left(\begin{smallmatrix}a\\b\end{smallmatrix}\right)=\left(\begin{smallmatrix}qe\\pqf\end{smallmatrix}\right)=\left(\begin{smallmatrix}qe\\0\end{smallmatrix}\right)=\left(\begin{smallmatrix}q\\0\end{smallmatrix}\right)(e)$.
It follows that the above sequence is exact, and the sequence
$$
\cdots\xrightarrow{p}T\xrightarrow{q}T\xrightarrow{p}T\xrightarrow{\left(\begin{smallmatrix}q\\0\end{smallmatrix}\right)}T^{\oplus2}\xrightarrow{\left(\begin{smallmatrix}p&r^i\\0&p\end{smallmatrix}\right)}T^{\oplus2}\to C\to0
$$
gives a minimal free resolution of the $T$-module $C$.

Now, assume that $S/(p)=T/pT$ is a direct summand of $C$.
Then $C\cong T/pT\oplus T/I$ for some ideal $I$ of $T$.
There are equalities of Betti numbers
$$
2=\beta_1^T(C)=\beta_1^T(T/pT\oplus T/I)=\beta_1^T(T/pT)+\beta_1^T(T/I)=1+\beta_1^T(T/I),
$$
and we get $\beta_1^T(T/I)=1$.
This means $I$ is a nonzero proper principal ideal of $T$; we write $I=gT$ where $g$ is a nonzero nonunit of $T$.
The uniqueness of a minimal free resolution yields a commutative diagram
$$
\xymatrix{
\cdots\ar[r]^{p} & T\ar[d]_\cong^{u_3}\ar[r]^{q} & T\ar[d]_\cong^{u_2}\ar[r]^{p} & T\ar[d]_\cong^{u_1}\ar[r]^-{\left(\begin{smallmatrix}q\\0\end{smallmatrix}\right)} & T^{\oplus2}\ar[d]_\cong^{\left(\begin{smallmatrix}t_1&t_2\\t_3&t_4\end{smallmatrix}\right)=:t}\ar[rr]^{\left(\begin{smallmatrix}p&0\\0&g\end{smallmatrix}\right)} && T^{\oplus2}\ar[d]_\cong^{\left(\begin{smallmatrix}s_1&s_2\\s_3&s_4\end{smallmatrix}\right)=:s}\ar[rr] && C\ar@{=}[d]\ar[r] & 0\\
\cdots\ar[r]_{p} & T\ar[r]_{q} & T\ar[r]_{p} & T\ar[r]_-{\left(\begin{smallmatrix}q\\0\end{smallmatrix}\right)} & T^{\oplus2}\ar[rr]_{\left(\begin{smallmatrix}p&r^i\\0&p\end{smallmatrix}\right)} && T^{\oplus2}\ar[rr] && C\ar[r] & 0
}
$$
whose vertical maps are isomorphisms.
As $s,t$ are isomorphisms, their determinants $s_1s_4-s_2s_3$ and $t_1t_4-t_2t_3$ are units of $T$.
The commutativity of the diagram shows $s_3p=pt_3$ and $t_3q=0$ in $T$, which imply $s_3-t_3\in(0:_Tp)=qT$ and $t_3\in(0:_Tq)=pT$.
Hence $s_3$ is a nonunit of $T$, and therefore $s_1,s_4$ are units of $T$.
Again from the commutativity of the diagram we get $s_4g=pt_4$ and $s_2g=pt_2+r^it_4$ in $T$, which give $p(s_2s_4^{-1}t_4-t_2)=r^it_4$.
Hence $r^it_4\in pT\in\spec T$ and $t_4\in pT$.
We now get $t_1t_4-t_2t_3$ is in $pT$, which contradicts the fact that it is a unit of $T$.
Consequently, $S/(p)$ is not a direct summand of $C$.

(2) Put $T=S/(pr)$ and $C=\cok_T\left(\begin{smallmatrix}
p\\
qr^i
\end{smallmatrix}\right)$.
We have $\spec T=\{pT,rT,\n T\}$.
Since $(p,qr^i)T$ is not contained in $pT$ or $rT$, it is $\n T$-primary and has positive grade.
Hence the sequence
$$
0 \to T \xrightarrow{\left(\begin{smallmatrix}
p\\
qr^i
\end{smallmatrix}\right)} T^{\oplus2} \to C\to0
$$
is exact, which gives a minimal free resolution of the $T$-module $C$.
This implies $\pd_RC=1$.

Suppose that $S/(p)=T/pT$ is a direct summand of $C$.
Then $T/pT$ has projective dimension at most one, which contradicts the fact that its minimal free resolution is $\cdots\xrightarrow{p}T\xrightarrow{q}T\xrightarrow{p}T\to T/pT\to0$.
It follows that $S/(p)$ is not a direct summand of $C$.
\end{proof}

\begin{lem}\label{24}
\begin{enumerate}[\rm(1)]
\item
The ring $S/(p,q)$ is artinian, and hence the number $\ell\ell(S/(p,q))$ is finite.
\item
Let $n\ge\ell\ell(S/(p,q))$ be a positive integer.
\begin{enumerate}[\rm(i)]
\item
If $X\in\cmp(R)$ is a cyclic direct summand of $M_n$, then $X$ is isomorphic to $R/(pqr)$.
\item
If $Y\in\cmp(R)$ is a cyclic direct summand of $N_n$, then $Y$ is isomorphic to $R/(pqr)$.
\end{enumerate}
\end{enumerate}
\end{lem}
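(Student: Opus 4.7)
Part (1) follows immediately: $p$ and $q$ are distinct irreducibles in the two-dimensional regular local UFD $S$, so they are coprime, $(p,q)$ has height two and is $\n$-primary, and hence $S/(p,q)$ is artinian with finite Loewy length.

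For part (2), by Lemma \ref{26} the cyclic objects of $\cmp(R)$ up to isomorphism are $R/(p)$, $R/(pq)$, $R/(pr)$, and $R/(pqr)$, so it suffices to exclude the first three as cyclic direct summands of $M_n$ (and, symmetrically, of $N_n$). The strategy is, for each candidate, to reduce $M_n$ modulo a judicious ideal $I$, decompose the resulting $R/I$-module, and apply Krull-Schmidt to conclude. For $R/(p)$, take $I=(pq)$: using that $r$ is a non-zerodivisor on $S/(pq)$, the basis change $e_2\mapsto e_2+re_3$ in the target of the reduced presentation yields
\[
M_n/(pq)M_n \;\cong\; S/(pq)\,\oplus\,\cok_{S/(pq)}\bigl(\begin{smallmatrix} p & r^n \\ 0 & p \end{smallmatrix}\bigr),
\]
at which point Lemma \ref{25}(1) together with the fact that $S/(pq)$ is a local indecomposable not isomorphic to $S/(p)$ eliminates $R/(p)=S/(p)$ as a summand.

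For $R/(pq)$ and $R/(pr)$, take $I=(r)$ and $I=(q)$ respectively, so that the quotient rings are the DVRs $S/(r)$ and $S/(q)$. A direct calculation shows $M_n/rM_n \cong (S/(p,r))^2 \oplus S/(r)$ for every $n\ge 1$, while a Smith normal form computation gives $M_n/qM_n \cong (S/(p,q))^2 \oplus S/(q)$; the latter requires $v_q(r^n)\ge v_q(p)$, which is exactly the content of the hypothesis $n\ge\ell\ell(S/(p,q))$ (equivalently, $r^n\in(p,q)$). In either case, were the candidate cyclic module a direct summand of $M_n$, the induced summand of $M_n/rM_n$ or $M_n/qM_n$ would be $S/(pq,r)$ or $S/(pr,q)$, a torsion cyclic of length $v_r(p)+v_r(q)$ or $v_q(p)+v_q(r)$, strictly exceeding the invariant $v_r(p)$ or $v_q(p)$ of the torsion factors that actually appear. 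The uniqueness in the structure theorem for finitely generated modules over a DVR then yields a contradiction.

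The argument for $N_n$ in part (ii) is strictly parallel: use $N_n/(pr)N_n$ with Lemma \ref{25}(2) to exclude $R/(p)$, and the quotients $N_n/qN_n$ and $N_n/rN_n$ together with the DVR structure theorem to exclude $R/(pr)$ and $R/(pq)$, respectively. The main technical work is the explicit basis change yielding each decomposition of $M_n/IM_n$ and $N_n/IN_n$ as a free cyclic plus a smaller cokernel; the bound on $n$ enters only through the Smith normal form step in the $M_n/qM_n$ computation, and Krull-Schmidt may be applied over each local quotient ring (passing to completion if necessary to guarantee local endomorphism rings of the relevant indecomposables).
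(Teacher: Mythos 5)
Your overall line of attack coincides with the paper's: reduce to the four candidates supplied by Lemma \ref{26}, eliminate $R/(p)$ by reducing modulo $(pq)$ (resp.\ $(pr)$ for $N_n$) and invoking Lemma \ref{25}, and eliminate $R/(pq)$ and $R/(pr)$ by reducing modulo $(r)$ and $(q)$. You also correctly locate the only place the hypothesis $n\ge\ell\ell(S/(p,q))$ enters, namely in guaranteeing $r^n\in(p,q)$ for the mod-$q$ computation, and the decompositions you assert are exactly the ones the paper obtains.

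The gap is in how you justify the exclusions of $R/(pq)$ and $R/(pr)$. You assert that $S/(r)$ and $S/(q)$ are DVRs and then appeal to Smith normal form and to the uniqueness of elementary divisors. But the Setup only assumes that $q$ and $r$ are irreducible elements of the two-dimensional regular local ring $S$; they may lie in $\n^2$ (e.g.\ $r=x^2-y^3$ in $k[\![x,y]\!]$), in which case $S/(r)$ is a one-dimensional local domain that is not regular, and neither Smith normal form nor the structure theorem is available; the quantities $v_r(p)+v_r(q)$ and the inequality $v_q(r^n)\ge v_q(p)$ are likewise not meaningful without a discrete valuation on the quotient. The repair is what the paper does: the decomposition $M_n/qM_n\cong R/(q)\oplus(R/(p,q))^{\oplus2}$ follows from the elementary column operation $r^n\equiv\alpha p\pmod{q}$, which is exactly what $r^n\in(p,q)$ provides (no normal form needed), and the exclusions then follow from Krull--Schmidt over the completed local quotient ring together with the fact that cyclic modules are isomorphic only if their annihilators coincide --- e.g.\ $(pq,r)$ equals neither $(p,r)$ nor $(r)$, since either equality would force $p\in(r)$ or $pq\in(r)$. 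You mention Krull--Schmidt in your closing sentence, so the correct tool is at hand; it needs to replace, not merely supplement, the DVR argument.
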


\begin{proof}
(1) The factoriality of $S$ shows that $pS$ is a prime ideal of $S$.
As $pS\ne qS$, we have $\height(p,q)S>\height pS=1$.
Since $S$ has dimension two, the ideal $(p,q)S$ is $\n$-primary.
Thus $S/(p,q)S$ is an artinian ring.

(2i) There is an $R$-module $Z$ such that $M_n\cong X\oplus Z$.
According to Lemma \ref{26}, it holds that $X\cong R/(f)$ for some $f\in\{p,pq,pr,pqr\}$.
There are isomorphisms
$$
R/(f,r)\oplus Z/rZ
\cong M_n/rM_n
\cong\cok_{R/(r)}\left(\begin{smallmatrix}
p&0&0\\
0&pq&p\\
0&0&0
\end{smallmatrix}\right)
\cong\cok_{R/(r)}\left(\begin{smallmatrix}
p&0&0\\
0&0&p\\
0&0&0
\end{smallmatrix}\right)
\cong(R/(p,r))^{\oplus2}\oplus R/(r).
$$
Taking the completions and using the Krull--Schmidt property and \cite[Exercise 7.5]{E}, we observe that the ideal $(f,r)R$ coincides with either $(p,r)R$ or $rR$.
Hence $f\ne pq$.
Similarly, there are isomorphisms
$$
R/(f,q)\oplus Z/qZ
\cong M_n/qM_n
\cong\cok_{R/(q)}\left(\begin{smallmatrix}
p&0&r^n\\
0&0&p\\
0&0&pr
\end{smallmatrix}\right)
\cong\cok_{R/(q)}\left(\begin{smallmatrix}
p&0&r^n\\
0&0&p\\
0&0&0
\end{smallmatrix}\right)
\cong R/(q)\oplus\cok_{R/(q)}\left(\begin{smallmatrix}
p&r^n\\
0&p
\end{smallmatrix}\right).
$$
The assumption $n\ge\ell\ell(S/(p,q))$ implies $r^n\in\n^n\subseteq(p,q)S$.
We observe from this that $\cok_{R/(q)}\left(\begin{smallmatrix}
p&r^n\\
0&p
\end{smallmatrix}\right)\cong\cok_{R/(q)}\left(\begin{smallmatrix}
p&0\\
0&p
\end{smallmatrix}\right)$, and obtain an isomorphism $R/(f,q)\oplus Z/qZ\cong R/(q)\oplus(R/(p,q))^{\oplus2}$.
It follows that $(f,q)R$ coincides with either $qR$ or $(p,q)R$, which implies $f\ne pr$.
Finally, consider the isomorphisms
$$
R/(f,pq)\oplus Z/pqZ
\cong M_n/pqM_n
\cong\cok_{R/(pq)}\left(\begin{smallmatrix}
p&0&r^n\\
0&0&p\\
0&0&pr
\end{smallmatrix}\right)
\cong\cok_{R/(pq)}\left(\begin{smallmatrix}
p&0&r^n\\
0&0&p\\
0&0&0
\end{smallmatrix}\right)
\cong R/(pq)\oplus\cok_{R/(pq)}\left(\begin{smallmatrix}
p&r^n\\
0&p
\end{smallmatrix}\right).
$$
If $f=p$, then $R/(f,pq)=R/(p)$ and we see that this is a direct summand of $\cok_{R/(pq)}\left(\begin{smallmatrix}
p&r^n\\
0&p
\end{smallmatrix}\right)$, which contradicts Lemma \ref{25}.
Thus $f\ne p$, and we conclude that $f=pqr$.

(2ii) We go along the same lines as the proof of (2i).
We have $N_n\cong Y\oplus Z$ for some $Z\in\mod R$, and get $Y\cong R/(f)$ for some $f\in\{p,pq,pr,pqr\}$ by Lemma \ref{26}.
The isomorphisms
\begin{align*}
&R/(f,r)\oplus Z/rZ
\cong N_n/rN_n
\cong\cok_{R/(r)}\left(\begin{smallmatrix}
0&0&0\\
0&0&p\\
0&0&pq
\end{smallmatrix}\right)
\cong\cok_{R/(r)}\left(\begin{smallmatrix}
0&0&0\\
0&0&p\\
0&0&0
\end{smallmatrix}\right)
\cong R/(p,r)\oplus(R/(r))^{\oplus2},\\
&R/(f,q)\oplus Z/qZ
\cong N_n/qN_n
\cong\cok_{R/(q)}\left(\begin{smallmatrix}
0&0&0\\
0&pr&p\\
0&0&0
\end{smallmatrix}\right)
\cong\cok_{R/(q)}\left(\begin{smallmatrix}
0&0&0\\
0&0&p\\
0&0&0
\end{smallmatrix}\right)
\cong R/(p,q)\oplus(R/(q))^{\oplus2}
\end{align*}
show that $(f,q)$ (resp. $(f,r)$) coincides with either $(p,q)$ or $(q)$ (resp. either $(p,r)$ or $(r)$), which implies $f\ne pq,pr$.
We also have isomorphisms
$$
R/(f,pr)\oplus Z/prZ
\cong N_n/prN_n
\cong\cok_{R/(pr)}\left(\begin{smallmatrix}
0&0&qr^n\\
0&0&p\\
0&0&pq
\end{smallmatrix}\right)
\cong\cok_{R/(pr)}\left(\begin{smallmatrix}
0&0&qr^n\\
0&0&p\\
0&0&0
\end{smallmatrix}\right)
\cong\cok_{R/(pr)}\left(\begin{smallmatrix}
p\\
qr^n
\end{smallmatrix}\right)\oplus R/(pr).
$$
Using Lemma \ref{25}, we see that $f\ne p$, and obtain $f=pqr$.
\end{proof}

The purpose of this subsection is now fulfilled.

\begin{thm}\label{33}
Let $S$ be a regular local ring of dimension two.
Let $p,q,r$ be distinct irreducible elements of $S$.
Then $R=S/(p^2qr)$ has infinite $\cmp$-representation type.
\end{thm}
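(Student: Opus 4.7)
The plan is to assume for contradiction that $R$ has finite $\cmp$-representation type and derive a contradiction from the infinite family $\{M_n\}_{n \ge 1}$ furnished by Lemma~\ref{23}. The key numerical observation is that $M_n=\cok_S A_n$ is presented by a $3\times3$ matrix with entries in $\n$, so $\nu_R(M_n)=3$, and any Krull--Schmidt decomposition of $M_n$ has at most three indecomposable summands whose generator counts sum to $3$. Enumerating by cyclic versus non-cyclic (meaning $\nu\ge2$) summands, only three shapes are possible: (a) $M_n$ itself is a non-cyclic indecomposable; (b) $M_n=X\oplus Y$ with $X$ cyclic and $Y$ non-cyclic of $\nu(Y)=2$; or (c) $M_n=X_1\oplus X_2\oplus X_3$ with all three summands cyclic.

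Fix $N\ge\ell\ell(S/(p,q))$. By the finite $\cmp$-representation type hypothesis, the set of non-cyclic indecomposables in $\cmp(R)$ is finite, and by Lemma~\ref{20} the set of cyclic maximal Cohen--Macaulay $R$-modules (the $R/fR$ with $f\mid p^2qr$ in $S$) is finite as well. Moreover, Lemma~\ref{24} restricts cyclic direct summands of $M_n$ (and of $N_n$) lying in $\cmp(R)$ to be isomorphic to $R/(pqr)$ for $n\ge N$. Combined with the fact that $M_n\in\cmp(R)$ forces at least one summand to lie in $\cmp(R)$, these two finiteness statements immediately restrict cases (a) and (c), and the subcase of (b) with $Y\in\cmp(R)$, to only finitely many isomorphism types of $M_n$.

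The remaining subcase of (b), where $Y\in\cmz(R)$ is non-cyclic and $X\in\cmp(R)$ is cyclic, is ruled out using syzygies. By Lemma~\ref{24}(2i) one has $X\cong R/(pqr)$. A direct computation in $R=S/(p^2qr)$ gives $\ann_R(pqr)=(p)R$, hence $\syz_R(R/(pqr))\cong R/(p)$. Since $R$ is a hypersurface and $Y\in\cmz(R)$ is maximal Cohen--Macaulay, Lemma~\ref{8} yields $\syz_R Y\in\cmz(R)$; taking syzygies summand-wise and using $\syz_R M_n=N_n$ from Lemma~\ref{23}(1), we obtain
\[
N_n\;\cong\;R/(p)\oplus\syz_R Y.
\]
This exhibits $R/(p)$ as a cyclic direct summand of $N_n$ lying in $\cmp(R)$ by Lemma~\ref{26}, yet $R/(p)\not\cong R/(pqr)$, contradicting Lemma~\ref{24}(2ii).

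Combining all cases, the family $\{M_n\}_{n\ge N}$ falls into only finitely many isomorphism classes, contradicting the pairwise non-isomorphism asserted in Lemma~\ref{23}(2). The main technical hurdle is the delicate subcase of (b): one must pivot from $M_n$ to $N_n$ and exploit the asymmetry that syzygy swaps the cyclic $\cmp$-modules $R/(pqr)$ and $R/(p)$, only one of which is allowed as a cyclic $\cmp$-summand by Lemma~\ref{24}. A minor point to secure Krull--Schmidt at the outset is to replace $S$ by its completion, which preserves the non-isomorphisms $M_n\ncong M_m$ by faithful flatness.
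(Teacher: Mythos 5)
Your proposal is correct and follows essentially the same strategy as the paper's proof: for large $n$ the module $M_n$ is forced to split off a cyclic summand in $\cmp(R)$, which Lemma \ref{24}(2i) pins down as $R/(pqr)$, and passing to syzygies exhibits $R/(p)\cong\syz_R(R/(pqr))$ as a cyclic $\cmp$-summand of $N_n$, contradicting Lemma \ref{24}(2ii). Your explicit enumeration of the decomposition shapes by generator counts is just a reorganization of the paper's argument (which instead shows the rank-two part $Y_i$ is eventually indecomposable, pairwise nonisomorphic, and hence in $\cmz(R)$), so no substantive difference.
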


\begin{proof}
We assume that $R$ has finite $\cmp$-representation type, and derive a contradiction.
It follows from Lemma \ref{23}(1) that there exists an integer $a\ge1$ such that both $M_i$ and $N_i$ are decomposable for all $i\ge a$; we write $M_i\cong X_i\oplus Y_i$ for some $R$-modules $X_i,Y_i$ with $\nu(X_i)=1$ and $\nu(Y_i)=2$.
In view of Lemmas \ref{20} and \ref{23}(2), we see that there exists an integer $b\ge a$ such that $Y_h$ is indecomposable for all $h\ge b$ and that $Y_i\ncong Y_j$ for all $i,j\ge b$ with $i\ne j$.
Then, we have to have $Y_i\in\cmz(R)$ for all $i\ge b$, and hence $X_i\in\cmp(R)$ for all $i\ge b$ (by Lemma \ref{23}(1)).
Putting $c=\max\{b,\ell\ell(S/(p,q))\}$ and applying Lemma \ref{24}(2i), we obtain that $X_i$ is isomorphic to $R/(pqr)$ for all $i\ge c$.
There are isomorphisms
$$
N_i\cong\syz_RM_i\cong\syz_RX_i\oplus\syz_RY_i\cong\syz_R(R/(pqr))\oplus\syz_RY_i\cong R/(p)\oplus\syz_RY_i,
$$
where the first isomorphism follows from Lemma \ref{23}(1).
Since $R/(p)$ is in $\cmp(R)$, it follows from Lemma \ref{24}(2ii) that $R/(p)\cong R/(pqr)$, which is absurd.
\end{proof}

\subsection{The hypersurface $S/(p^2q)$}

The goal of this subsection is to prove the following theorem.

\begin{thm}\label{34}
Let $(S,\n)$ be a $2$-dimensional regular local ring.
Let $p,q$ be distinct irreducible elements of $S$.
Suppose that $R=S/(p^2q)$ has finite $\cmp$-representation type.
Then $p,q\notin\n^2$.
\end{thm}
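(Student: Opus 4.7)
The plan is to prove the contrapositive: assume $p\in\n^2$ or $q\in\n^2$, and deduce that $R=S/(p^2q)$ has infinite $\cmp$-representation type. The two cases split naturally, with the first reducing cleanly to Theorem \ref{32} and the second requiring a more delicate analysis.

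First I would handle the case $p\in\n^2$. Since $(p^2)\supseteq(p^2q)$ in $S$, the ideal $I=(p^2)\subset R$ gives $R/I\cong S/(p^2)$, which is $1$-dimensional Cohen--Macaulay, hence a maximal Cohen--Macaulay $R$-module. Theorem \ref{32} asserts that $S/(p^2)$ has infinite $\cmp$-representation type (because the irreducible element $p$ lies in $\n^2$), and Theorem \ref{19}(1) then transfers infinite $\cmp$-representation type to $R$.

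For the remaining case $q\in\n^2$ with $p\notin\n^2$, I would change coordinates so that $p=x$ is part of a regular system of parameters $\{x,y\}$ of $S$. By Corollary \ref{39} applied to $R$, the ring $S/(xq)$ has finite $\cm$-representation type. Since a $1$-dimensional hypersurface of finite $\cm$-representation type has multiplicity at most $3$ (by the ADE classification, invoked via \cite[Theorem 4.2]{LW} as in the proof of Corollary \ref{39}), we obtain $\mathrm{ord}(q)\le 2$; combined with $q\in\n^2$ this forces $\mathrm{ord}(q)=2$. Thus $S/(xq)$ must be of type $D_n$ for some odd $n\ge 5$ or of type $E_7$, and after normalization $R$ takes the specific form $S/(y^2(x^2+y^{n-2}))$ or $S/(x^2(x^2+y^3))$.

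The hard part will be this final sub-case. Reductions via Theorem \ref{19} applied to cyclic MCM quotients do not suffice, since $S/(pq)$ itself has finite $\cm$-representation type and so the naturally available quotients of $R$ fail to have infinite $\cmp$-representation type. I would instead construct an explicit infinite family of pairwise non-isomorphic indecomposable MCM modules in $\cmp(R)$ via matrix factorizations, mimicking the construction in the proof of Theorem \ref{33}. A natural candidate family is obtained by lifting the matrix factorizations over $S/(xq)$ (from the $D_n$ or $E_7$ structure) via multiplication by $x$, then parametrizing additional non-trivial deformations using monomial factors $y^i$ coming from the extra order-$2$ structure of $q$. Verifying that this family is pairwise non-isomorphic and indecomposable in $\cmp(R)$ would require Fitting-ideal computations and careful localization analysis analogous to those in Lemmas \ref{23}--\ref{25}, and carrying out this explicit construction is the technically involved step I expect to be the main obstacle.
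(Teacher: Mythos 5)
Your first case ($p\in\n^2$) is exactly the paper's argument: reduce modulo $(p^2)$ and combine Theorem \ref{32} with Theorem \ref{19}(1). The problem is the second case, where you have correctly identified that an explicit matrix-factorization construction is needed but have not actually produced one; you acknowledge this is ``the main obstacle,'' and it is precisely where all the content of the theorem lives. The paper, after normalizing $p=x$ to be part of a regular system of parameters and writing the irreducible $h:=q\in\n^2$ as $h=x^2s+xyt+y^2u$, exhibits concrete $3\times3$ matrix factorizations $(A_i,B_i)$ of $x^2h$, verifies via localization at $(x)$ that $M_i=\cok A_i$ and $N_i=\cok B_i$ lie in $\cmp(R)$, and distinguishes them by Fitting ideals. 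Crucially, these $M_i$ are \emph{not} claimed to be indecomposable, so the target you set yourself (``an infinite family of pairwise non-isomorphic indecomposable MCM modules'') is not what gets proved; instead one must rule out the possibility that the $M_i$ decompose into finitely many recurring pieces. That step occupies Lemmas \ref{36}--\ref{38} and the proof of Lemma \ref{35}: a classification of the cyclic modules in $\cmp(R)$ as $R/(x)$ and $R/(xh)$, a delicate comparison of minimal free resolutions showing that any cyclic summand of $M_i$ or $N_i$ lying in $\cmp(R)$ must be $R/(xh)$, and then a contradiction via the syzygy swap $\syz_R M_i=N_i$, which would force $R/(x)\cong R/(xh)$. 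None of this, nor any workable substitute, appears in your proposal.

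A secondary issue: your reduction through Corollary \ref{39} and the ADE classification (``$S/(xq)$ must be of type $D_n$ or $E_7$'') is both unnecessary and unavailable at the stated level of generality. Theorem \ref{34} assumes only that $S$ is a $2$-dimensional regular local ring, with no hypothesis on the residue field, whereas the $D_n$/$E_7$ normal forms require an algebraically closed (or at least suitably large) residue field of characteristic not $2$. The paper sidesteps this entirely by working with an arbitrary irreducible $h\in\n^2$ in the form $h=x^2s+xyt+y^2u$; the matrices and all subsequent computations are carried out in that generality. So even as a roadmap, routing the argument through the classification would weaken the statement you end up proving.
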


Note that the rings $R$ and $R/p^2R$ are local hypersurfaces of dimension one.
If $p\in\n^2$, then $R/p^2R=S/(p^2)$ has infinite $\cmp$-representation type by Theorem \ref{32}, and so does $R$ by Theorem \ref{19}(1), which contradicts the assumption of the theorem.
Hence $p\notin\n^2$, and $p$ is a member of a regular system of parameters of $S$.
Thus we establish the following setting.

\begin{setup}
Throughout the remainder of this subsection, let $(S,\n)$ be a regular local ring of dimension two.
Let $x,y$ be a regular system of parameters of $S$, namely, $\n=(x,y)$.
Let $h\in\n^2$ be an irreducible element, and write $h=x^2s+xyt+y^2u$ with $s,t,u\in S$.
Let $R=S/(x^2h)$ be a local hypersurface of dimension one.
One has $\spec R=\{\p,\q,\m\}$, where we set $\p=xR$, $\q=hR$ and $\m=\n R$.
For each integer $i\ge1$ we define matrices
$$
A_i=
\begin{pmatrix}
x&0&y^i\\
0&xy&x\\
0&xh&0
\end{pmatrix},\qquad
B_i=
\begin{pmatrix}
xh&-y^ih&y^{i+1}\\
0&0&x\\
0&xh&-xy
\end{pmatrix}
$$
over $S$.
We put $M_i=\cok_SA_i$ and $N_i=\cok_SB_i$.
\end{setup}

In what follows, we argue along similar lines as in the previous subsection.

\begin{lem}[cf. Lemma \ref{23}]\label{37}
\begin{enumerate}[\rm(1)]
\item
Let $i\ge1$ be an integer.
The modules $M_i$ and $N_i$ belong to $\cmp(R)$, and it holds that $\syz_RM_i=N_i$ and $\syz_RN_i=M_i$.
\item
Let $i,j\ge1$ be integers with $i\ne j$.
One then have $M_i\ncong M_j$ and $N_i\ncong N_j$ as $R$-modules.
\end{enumerate}
\end{lem}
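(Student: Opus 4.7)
My plan is to mirror the proof of Lemma \ref{23} step for step, with the matrix factorization $(A_i,B_i)$ of $x^2h$ playing the role of the factorization of $p^2qr$ used there.

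The first step is to verify by direct multiplication that $A_iB_i=B_iA_i=x^2h\cdot E$; this is a routine check of nine dot products. Once this identity is in hand, the standard theory of matrix factorizations (see Chapter~7 of \cite{Y}) gives in one stroke that $M_i,N_i\in\cm(R)$ together with the syzygy relations $\syz_RM_i\cong N_i$ and $\syz_RN_i\cong M_i$; since every entry of $A_i$ and $B_i$ lies in $\n$ (using $i\ge1$ and $h\in\n^2$), both presentations are minimal and the syzygies are unambiguous up to isomorphism. For the membership $M_i\in\cmp(R)$ I would localize at $\p=xR$: in $R_\p=S_{(x)}/(x^2)$ both $y$ and $h$ are units (the latter because $h$ is irreducible and not associate to $x$), so the unit entry $y^i$ in position $(1,3)$ of $A_i$ can be used to sweep out the remainder of its row and column, and the unit $h/y$ then kills the leftover entry $xh$. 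The surviving $2\times 2$ block is $\bigl(\begin{smallmatrix}0&xy\\0&0\end{smallmatrix}\bigr)$, yielding $(M_i)_\p\cong R_\p\oplus\kappa(\p)$, which is not $R_\p$-free. Hence $M_i\in\cmp(R)$, and Lemma \ref{8} lifts this to $N_i=\syz_RM_i$.

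For part~(2), the quickest invariant is the second Fitting ideal: reading it off the presentation matrix, $\fitt_2(M_i)=\I_1(A_i)R=(x,xy,y^i,xh)R=(x,y^i)R$. An isomorphism $M_i\cong M_j$ then forces $(x,y^i)R=(x,y^j)R$, and reduction modulo $x$ drops us into the discrete valuation ring $S/(x)$, where $(y^i)=(y^j)$ demands $i=j$. For the $N_i$'s I would bootstrap via syzygies: an isomorphism $N_i\cong N_j$ yields $M_i\cong\syz_RN_i\cong\syz_RN_j\cong M_j$ by part~(1), and the previous step concludes $i=j$. The only subtlety I anticipate is in the localization computation, since $R_\p$ is a non-reduced artinian local ring with $x^2=0$ and the elementary operations must be performed over this ring rather than over a field; once the units are tracked correctly, the structural steps parallel Lemma \ref{23} almost verbatim.
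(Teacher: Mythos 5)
Your proposal is correct and follows essentially the same route as the paper: verifying $A_iB_i=B_iA_i=x^2hE$, invoking matrix factorization theory for the MCM and syzygy claims, localizing at $\p=xR$ (where $y,h$ are units and $x^2=0$) to obtain $(M_i)_\p\cong R_\p\oplus\kappa(\p)$, passing to $N_i$ via Lemma \ref{8}, and distinguishing the $M_i$ by $\fitt_2(M_i)=(x,y^i)R$ reduced modulo $x$ in the discrete valuation ring $S/(x)$. Your elementary-operation bookkeeping in the localization differs only cosmetically from the paper's chain of cokernel isomorphisms and arrives at the same conclusion.
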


\begin{proof}
(1) We have $A_iB_i=B_iA_i=x^2hE$.
The matrices $A_i,B_i$ give a matrix factorization of $x^2h$ over $S$.
We have that $M_i,N_i$ are maximal Cohen--Macaulay $R$-modules with $\syz_RM_i=N_i$ and $\syz_RN_i=M_i$.
Note that $y,h$ are units and $x^2=0$ in $R_\p=S_{(x)}/x^2S_{(x)}$.
We have
\begin{align*}
(M_i)_\p
&\cong\cok_{R_\p}\left(\begin{smallmatrix}x&0&y^i\\0&xy&x\\0&x&0\end{smallmatrix}\right)
\cong\cok_{R_\p}\left(\begin{smallmatrix}x&0&y^i\\0&0&x\\0&x&0\end{smallmatrix}\right)
\cong\cok_{R_\p}\left(\begin{smallmatrix}x&0&1\\0&0&x\\0&x&0\end{smallmatrix}\right)
\cong\cok_{R_\p}\left(\begin{smallmatrix}0&0&1\\-x^2&0&x\\0&x&0\end{smallmatrix}\right)\\
&=\cok_{R_\p}\left(\begin{smallmatrix}0&0&1\\0&0&x\\0&x&0\end{smallmatrix}\right)
\cong\cok_{R_\p}\left(\begin{smallmatrix}0&0&1\\0&0&0\\0&x&0\end{smallmatrix}\right)
\cong\cok_{R_\p}\left(\begin{smallmatrix}0&0\\0&x\end{smallmatrix}\right)
\cong R_\p\oplus\kappa(\p),
\end{align*}
which shows that $M_i\in\cmp(R)$, and Lemma \ref{8} implies $N_i\in\cmp(R)$ as well.

(2) If $M_i\cong M_j$, then $(x,y^i)R=\fitt_2(M_i)=\fitt_2(M_j)=(x,y^j)R$, and $(\overline{y}^i)=(\overline{y}^j)$ in the discrete valuation ring $R/xR=S/(x)$ with $\overline y$ a uniformizer, which implies $i=j$.
As $N_i,N_j$ are the first syzygies of $M_i,M_j$ by (1), we see that if $N_i\cong N_j$, then $i=j$.
\end{proof}

\begin{lem}[cf. Lemma \ref{26}]\label{36}
It holds that $\{M\in\cmp(R)\mid\text{$M$ is cyclic}\}/_{\cong}=\{R/(x),\,R/(xh)\}/_{\cong}$.
\end{lem}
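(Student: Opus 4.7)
The plan is to combine Lemma \ref{20} (applied to the element $x^2h$) with a direct analysis of the two nonmaximal localizations $R_\p$ and $R_\q$.

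First, Lemma \ref{20} identifies the cyclic modules in $\cm(R)$, up to isomorphism, with the quotients $R/fR$ where $f$ runs over divisors of $x^2h$ in $S$. Since $S$ is a unique factorization domain and the irreducibles $x$ and $h$ are non-associate (as $x\notin\n^2$ while $h\in\n^2$, they cannot differ by a unit), the divisors of $x^2h$ in $S$ are, up to units, exactly $1,\,x,\,x^2,\,h,\,xh,\,x^2h$. This yields a finite candidate list for cyclic maximal Cohen--Macaulay $R$-modules.

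Next I would pin down the structure of the nonmaximal localizations. Because $h\notin xS$, the element $h$ is a unit in the DVR $S_{(x)}$, so $R_\p\cong S_{(x)}/(x^2)$, a non-reduced local artinian ring in which $xR_\p\ne 0$. Symmetrically, $x$ is a unit in $S_{(h)}$, giving $R_\q\cong S_{(h)}/(h)=\kappa(\q)$, a field. Consequently every maximal Cohen--Macaulay $R$-module is automatically free at $\q$, and so membership in $\cmp(R)$ is detected purely by non-freeness at $\p$.

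Finally I would run through the candidate list, computing $(R/fR)_\p = R_\p/fR_\p$. For $f\in\{1,\,x^2,\,h,\,x^2h\}$, the element $f$ is either a unit or zero in $R_\p$, so the localization is $0$ or $R_\p$, both free; these contribute to $\cmz(R)$. For $f\in\{x,\,xh\}$, $f$ is associate to $x$ in $R_\p$ (since $h$ is a unit there), so the localization equals $R_\p/(x)\cong\kappa(\p)$, which is not $R_\p$-free because $xR_\p\ne 0$ forces $R_\p$ to not be a field; these two belong to $\cmp(R)$. Combining these observations yields the claimed equality. The only subtlety is the non-associativity of $x$ and $h$ as irreducibles of $S$, which hinges essentially on $h\in\n^2$; the rest is routine bookkeeping.
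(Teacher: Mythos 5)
Your proposal is correct and follows essentially the same route as the paper: both rest on Lemma \ref{20} to reduce to quotients $R/fR$ with $f\mid x^2h$, observe that $R_\q$ is a field so that membership in $\cmp(R)$ is tested only at $\p=xR$, and then use that $h$ is a unit and $x^2=0$ in $R_\p$ to single out $f=x$ and $f=xh$. Your version merely enumerates all six divisor classes explicitly where the paper phrases the same condition as ``$x\mid f$ and $x^2\nmid f$''; this is a presentational difference only.
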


\begin{proof}
It is easy to see that neither $(R/(x))_\p$ nor $(R/(xh))_\p$ is $R_\p$-free.
Let $M\in\cmp(R)$ be cyclic.
As $R_\q$ is a field, $M_\p$ is not $R_\p$-free.
Using Lemma \ref{20}, we get $M\cong R/fR$ for some $f\in S$ with $f\mid x^2h$, $x\mid f$ and $x^2\nmid f$.
Hence, either $f=x$ or $f=xh$ holds.
\end{proof}

\begin{lem}[cf. Lemma \ref{24}]\label{38}
Let $i\ge1$ be an integer.
Let $C$ be a cyclic $R$-module with $C\in\cmp(R)$.
If $C$ is a direct summand of either $M_i$ or $N_i$, then $C$ is isomorphic to $R/(xh)$.
\end{lem}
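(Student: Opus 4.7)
The plan is to apply Lemma \ref{36}, which tells us that any cyclic module in $\cmp(R)$ is isomorphic to either $R/(x)$ or $R/(xh)$, and then to rule out $R/(x)$ as a direct summand of $M_i$ and $N_i$ via a Fitting-ideal computation in the spirit of the proof of Lemma \ref{37}(2).

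I would view all modules as $S$-modules via the surjection $S\twoheadrightarrow R$ and apply the product formula $\fitt_n(X\oplus Y)=\sum_{a+b=n}\fitt_a(X)\fitt_b(Y)$, with all Fitting ideals computed over $S$. Direct calculation from $A_i$ gives
\[
\fitt_0(M_i)=(x^3h),\quad \fitt_1(M_i)=(x^2,xy^{i+1}),\quad \fitt_2(M_i)=(x,y^i),
\]
and from $B_i$,
\[
\fitt_0(N_i)=(x^3h^2),\quad \fitt_1(N_i)=(x^2h,xy^ih),\quad \fitt_2(N_i)=(x,y^{i+1}).
\]
Suppose $M_i\cong R/(x)\oplus Z$ with $\nu(Z)=2$. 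Using $\fitt_0(R/(x))=(x)$ and $\fitt_k(R/(x))=S$ for $k\ge1$, the formula at $n=0$ forces $\fitt_0(Z)=(x^2h)$. Substituting at $n=1$ and cancelling the common factor $x$ in the domain $S$ yields $(x,y^{i+1})=\fitt_1(Z)+(xh)$; since $1-h\beta$ is a unit in $S$ for every $\beta\in S$, this implies $x\in\fitt_1(Z)\subseteq(x,y^{i+1})$. The formula at $n=2$ then reads $(x,y^i)=(x)+\fitt_1(Z)=\fitt_1(Z)$, so $\fitt_1(Z)=(x,y^i)$. Combining, $(x,y^i)\subseteq(x,y^{i+1})$, contradicting $y^i\notin(x,y^{i+1})$ in the DVR $S/(x)\cong k[[y]]$.

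The case of $N_i$ is analogous. Suppose $N_i\cong R/(x)\oplus W$ with $\nu(W)=2$. The product formula combined with the Fitting ideals of $N_i$ computed above gives $\fitt_0(W)=(x^2h^2)$ and, cancelling $x$ at $n=1$, $\fitt_1(W)\subseteq(xh,y^ih)$. The case $n=2$ then forces
\[
y^{i+1}\in(x)+\fitt_1(W)\subseteq(x)+(xh,y^ih)=(x,y^ih).
\]
Reducing modulo $(x)$ inside $k[[y]]=S/(x)$ and using $h=x^2s+xyt+y^2u$, so that $h$ maps to $y^2\bar u$ in $k[[y]]$, we obtain $y^{i+1}\in(\bar u\, y^{i+2})\subseteq(y^{i+2})k[[y]]$, which is impossible since $y^{i+1}\notin(y^{i+2})$ in the DVR $k[[y]]$.

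Hence $R/(x)$ cannot appear as a direct summand of $M_i$ or $N_i$, so the cyclic module $C\in\cmp(R)$ must be isomorphic to $R/(xh)$. The main obstacle is the bookkeeping through the Fitting product formula and the careful cancellation of $x$ in the domain $S$; once these are carried out, each case closes with a short valuation argument in $k[[y]]$.
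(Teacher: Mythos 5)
Your proof is correct, and it takes a genuinely different route from the paper's. The paper also begins by invoking Lemma \ref{36} to reduce to excluding $R/(x)$ as a direct summand, but from there it proceeds by specialization: for $M_i$ it tensors with $R/(xy)$ (and then with $R/(x^4)$) to reduce to a $2\times2$ cokernel over $T=S/(x^4,xy)$, writes down an explicit minimal free resolution of that cokernel, and uses uniqueness of minimal free resolutions to produce a commutative diagram whose transition matrices are shown to have contradictory unit/non-unit entries; for $N_i$ it runs a parallel argument over $T=S/(xh)$. Your argument instead stays over $S$ and exploits the multiplicativity of Fitting ideals on direct sums (a standard fact for the block-diagonal presentation). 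I checked your minor computations: $\I_2(A_i)=(x^2y,x^2,xy^{i+1},x^2h,xy^ih)=(x^2,xy^{i+1})$ and $\I_1(A_i)=(x,y^i)$, using $h\in(x,y^2)$; likewise $\I_2(B_i)=(x^2h,xy^ih,x^2h^2,x^2yh)=(x^2h,xy^ih)$ and $\I_1(B_i)=(x,y^{i+1})$, and the determinants are $\pm x^3h$ and $\pm x^3h^2$ as you state. The cancellation of $x$ is legitimate since $S$ is a domain, the deduction $x\in\fitt_1(Z)$ is valid because $h\in\n$ makes $1-h\beta$ a unit, and both final contradictions are correct valuation computations in the discrete valuation ring $S/(x)$ with uniformizer $\overline{y}$ (you write $k[\![y]\!]$, which literally presumes $S$ complete and equicharacteristic, but only the DVR structure is used, so this is harmless). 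Your approach buys a shorter, uniform treatment of both families at the price of explicit minor bookkeeping; the paper's approach avoids Fitting-ideal multiplicativity but pays with the two resolution diagram chases.
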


\begin{proof}
(1) First, consider the case where $C$ is a direct summand of $M_i$.
Assume that $C$ is not isomorphic to $R/(xh)$.
Then $C\cong R/(x)$ by Lemma \ref{36}.
Application of the functor $-\otimes_RR/(xy)$ shows that $C/xyC=C\cong R/(x)$ is a direct summand of
$$
M_i/xyM_i
=\cok_{R/(xy)}\left(\begin{smallmatrix}x&0&y^i\\0&0&x\\0&xh&0\end{smallmatrix}\right)
\cong\cok_{R/(xy)}\left(\begin{smallmatrix}x&y^i&0\\0&x&0\\0&0&xh\end{smallmatrix}\right)
\cong\cok_{R/(xy)}\left(\begin{smallmatrix}x&y^i\\0&x\end{smallmatrix}\right)\oplus R/(xy,xh).
$$
As $(x)\ne(xy,xh)$, we have $R/(x)\ncong R/(xy,xh)$ and hence $R/(x)$ is a direct summand of $\cok_{R/(xy)}\left(\begin{smallmatrix}x&y^i\\0&x\end{smallmatrix}\right)$.
Note that $R/(xy)=S/(x^2h,xy)=S/(x^2(x^2s+xyt+y^2u),xy)=S/(x^4s,xy)$.
Put $T:=R/(xy,x^4)=S/(x^4,xy)$.
Applying the functor $-\otimes_RR/(x^4)$, we see that $T/(x)=R/(x)$ is a direct summand of $L:=\cok_{T}\left(\begin{smallmatrix}x&y^i\\0&x\end{smallmatrix}\right)$.
Write $L=T/(x)\oplus D$ with $D\in\mod T$.
It is easy to verify that the sequence
$$
0\gets L\gets T^{\oplus2}\xleftarrow{\left(\begin{smallmatrix}x&y^i\\0&x\end{smallmatrix}\right)}T^{\oplus2}\xleftarrow{\left(\begin{smallmatrix}y&x^3&0\\0&0&x^3\end{smallmatrix}\right)}T^{\oplus3}
$$
is exact, and we observe $D\cong T/(v)$ for some $v\in T$.
Uniqueness of a minimal free resolution gives rise to a commutative diagram
$$
\xymatrix{
0 & L\ar[l]\ar[d]_\cong & T^{\oplus2}\ar[l]\ar[d]_\cong^{\left(\begin{smallmatrix}a_1&a_2\\a_3&a_4\end{smallmatrix}\right)} && T^{\oplus2}\ar[ll]_{\left(\begin{smallmatrix}x&y^i\\0&x\end{smallmatrix}\right)}\ar[d]_\cong^{\left(\begin{smallmatrix}b_1&b_2\\b_3&b_4\end{smallmatrix}\right)} && T^{\oplus3}\ar[ll]_{\left(\begin{smallmatrix}y&x^3&0\\0&0&x^3\end{smallmatrix}\right)}\\
0 & T/(x)\oplus T/(v)\ar[l] & T^{\oplus2}\ar[l] && T^{\oplus2}\ar[ll]^{\left(\begin{smallmatrix}x&0\\0&v\end{smallmatrix}\right)}
}
$$
with vertical maps being isomorphisms.
The elements $a_1a_4-a_2a_3$ and $b_1b_4-b_2b_3$ are units of $T$.
We have $a_1y^i+a_2x=xb_2$ and $a_1x=xb_1$ in $T$.
Hence $a_1y^i\in(x)\in\spec T$, which implies $a_1\in(x)$.
Also, $a_1-b_1\in(0:x)=(x^3,y)$, which implies $b_1\in(x,y)$.
It follows that $a_2,a_3,b_2,b_3$ are units of $T$.
The equality $a_3x=vb_3$ implies that $(x)=(v)$ in $T$.
We obtain isomorphisms
$$
T/(x^3,y)\oplus T/(x^3)
\cong\cok_T\left(\begin{smallmatrix}y&x^3&0\\0&0&x^3\end{smallmatrix}\right)
\cong\syz_TL
\cong(x)\oplus(v)
\cong(x)^{\oplus2}
\cong(T/(x^3,y))^{\oplus2}.
$$
It follows that $(x^3)=(x^3,y)$ in $T$, which is a contradiction.
Consequently, $C$ is isomorphic to $R/(xh)$.

(2) Next we consider the case where $C$ is a direct summand of $N_i$.
The proof is analogous to that of (1).
Again, assume $C\ncong R/(xh)$.
Then $C\cong R/(x)$ by Lemma \ref{36}.
Set $T:=R/(xh)=S/(xh)$.
Applying $-\otimes_RT$, we see that $R/(x)=T/(x)$ is a direct summand of
$$
N_i/xhN_i
=\cok_{T}\left(\begin{smallmatrix}0&-y^ih&y^{i+1}\\0&0&x\\0&0&-xy\end{smallmatrix}\right)
\cong\cok_{T}\left(\begin{smallmatrix}0&-y^ih&y^{i+1}\\0&0&x\\0&0&0\end{smallmatrix}\right)
\cong T\oplus\cok_T\left(\begin{smallmatrix}y^ih&y^{i+1}\\0&x\end{smallmatrix}\right),
$$
which implies that $T/(x)$ is a direct summand of $L:=\cok_{T}\left(\begin{smallmatrix}y^ih&y^{i+1}\\0&x\end{smallmatrix}\right)$.
There are an isomorphism $L\cong T/(x)\oplus T/(v)$ with $v\in T$ and a commutative diagram:
$$
\xymatrix{
0 & L\ar[l]\ar[d]_\cong & T^{\oplus2}\ar[l]\ar[d]_\cong^{\left(\begin{smallmatrix}a_1&a_2\\a_3&a_4\end{smallmatrix}\right)} && T^{\oplus2}\ar[ll]_{\left(\begin{smallmatrix}y^ih&y^{i+1}\\0&x\end{smallmatrix}\right)}\ar[d]_\cong^{\left(\begin{smallmatrix}b_1&b_2\\b_3&b_4\end{smallmatrix}\right)} \\
0 & T/(x)\oplus T/(v)\ar[l] & T^{\oplus2}\ar[l] && T^{\oplus2}\ar[ll]^{\left(\begin{smallmatrix}x&0\\0&v\end{smallmatrix}\right)}
}
$$
Note that $\spec T=\{(x),(h),\n T\}$.
We have $(h)\ni a_1y^ih=xb_1\in(x)$, which implies $a_1\in(x)$ and $b_1\in(h)$.
As $a_1a_4-a_2a_3$ and $b_1b_4-b_2b_3$ are units, so are $a_2,a_3,b_2,b_3$.
The equalities $a_3y^ih=vb_3$ and $a_3y^{i+1}+a_4x=vb_4$ imply $a_3y^i(b_3^{-1}hb_4-y)=a_4x\in(x)$, which gives $b_3^{-1}hb_4-y\in(x)$.
Hence $y\in(x,h)=(x,x^2s+xyt+y^2u)=(x,y^2u)$ in $T$, which is a contradiction.
Thus $C\cong R/(xh)$.
\end{proof}

\begin{lem}[cf. Theorem \ref{33}]\label{35}
The ring $R$ has infinite $\cmp$-representation type.
\end{lem}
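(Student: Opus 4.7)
The plan is to mirror the strategy of Theorem \ref{33}, using the explicit families $\{M_i\}, \{N_i\}$ provided in the setup. Assume for contradiction that $R$ has finite $\cmp$-representation type. By Lemma \ref{37}, the modules $M_i, N_i$ lie in $\cmp(R)$ and are pairwise non-isomorphic, so only finitely many can be indecomposable, and in particular for $i$ large $M_i$ must decompose. Since $\nu_R(M_i)=3$ (all entries of $A_i$ lie in $\n$), any proper decomposition has shape $M_i\cong X_i\oplus Y_i$ with $\nu(X_i)=1$ and $\nu(Y_i)=2$.

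Next I would combinatorially pin down the decomposition. Lemma \ref{20} says there are only finitely many isomorphism classes of cyclic modules in $\cm(R)$. Thus there are only finitely many modules that are direct sums of three cyclic MCM modules, so for $i$ large $Y_i$ cannot split further; i.e., we may take $Y_i$ indecomposable. For the same counting reason, a fixed $Y$ can appear as summand of only finitely many $M_i$ (since $X_i$ ranges over a finite set), so after discarding finitely many $i$ the $Y_i$ are pairwise non-isomorphic. Since $\ind\cmp(R)$ is assumed finite, all but finitely many of these indecomposables $Y_i$ lie in $\cmz(R)$, and then $X_i\in\cmp(R)$ because $M_i\in\cmp(R)$ and $\cmz(R)$ is closed under direct summands.

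Now I would invoke Lemma \ref{38}: the cyclic module $X_i\in\cmp(R)$ is a direct summand of $M_i$, so $X_i\cong R/(xh)$. Passing to syzygies and using that $\syz_R$ distributes over direct sums (up to minimality) together with the periodicity computation $(0:_R xh)=(x)$, $(0:_R x)=(xh)$, we obtain
\[
N_i=\syz_R M_i\;\cong\;\syz_R(R/(xh))\oplus \syz_R Y_i\;\cong\;R/(x)\oplus\syz_R Y_i.
\]
But $R/(x)$ is cyclic and visibly in $\cmp(R)$, so applying Lemma \ref{38} a second time to $N_i$ forces $R/(x)\cong R/(xh)$. Since $(x)\neq(xh)$ in $R$ (as $h\in\n$), this is absurd, and the assumption that $R$ has finite $\cmp$-representation type fails.

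The main obstacle I anticipate is the combinatorial bookkeeping in the second paragraph: namely, arranging the decomposition $M_i=X_i\oplus Y_i$ so that $Y_i$ is indecomposable and the family $\{Y_i\}_{i\gg 0}$ contains infinitely many isomorphism classes, despite the lack of Krull--Schmidt over a general one-dimensional Cohen--Macaulay ring. The counting argument using Lemma \ref{20} (finiteness of cyclic MCM modules) is what makes this work uniformly, and it is the step most likely to require careful phrasing.
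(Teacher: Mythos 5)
Your proof is correct and uses essentially the same ingredients as the paper's: Lemma \ref{37} to force decompositions $M_i\cong X_i\oplus Y_i$ for large $i$, the finiteness of cyclic maximal Cohen--Macaulay modules (Lemma \ref{20}) to control the bookkeeping, and a double application of Lemma \ref{38} through the syzygy $N_i=\syz_RM_i$ to reach the absurdity $R/(x)\cong R/(xh)$. The paper's own write-up merely reverses the order --- it first shows no $M_i$ can have a cyclic summand in $\cmp(R)$ at all, then derives the contradiction from the resulting infinitude of $2$-generated indecomposables in $\cmp(R)$ --- but this is the same argument.
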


\begin{proof}
Assume contrarily that $R$ has finite $\cmp$-representation type.
Then, by (1) and (2) of Lemma \ref{37}, there exists an integer $a\ge1$ such that $M_i$ is decomposable for all integers $i\ge a$.

Suppose that for some $i\ge1$ the module $M_i$ has a cyclic direct summand $C\in\cmp(R)$.
Then $C$ is isomorphic to $R/(xh)$ by Lemma \ref{38}, and $\syz_RC=R/(x)$ is a direct summand of $\syz_RM_i=N_i$ by Lemma \ref{37}(1).
Applying Lemma \ref{38} again, we have to have $R/(x)\cong R/(xh)$, which is a contradiction.

Thus $M_i$ has no cyclic direct summand belonging to $\cmp(R)$ for all $i\ge 1$.
This means that for every $i\ge a$ the $R$-module $M_i$ has an indecomposable direct summand $Y_i\in\cmp(R)$ with $\nu(Y_i)=2$.
This, in turn, contradicts the assumption that $R$ has finite $\cmp$-representation type.
\end{proof}

Now the purpose of this subsection is readily accomplished:

\begin{proof}[Proof of Theorem \ref{34}]
The theorem is an immediate consequence of Lemma \ref{35} and what we state just after the theorem.
\end{proof}

\section{On the higher-dimensional case}\label{h}

In this section, we explore the higher-dimensional case: we consider Cohen--Macaulay local rings $R$ with $\dim R\ge2$ and having finite $\cmp$-representation type.
In particular, we give various results supporting Conjecture \ref{conj11}.
We begin with presenting an example by using a result obtained in Section 4.

\begin{ex}
Let $S$ be a regular local ring with a regular system of parameters $x,y,z$.
Then $R=S/(xyz)$ has infinite $\cmp$-representation type.
\end{ex}

\begin{proof}
Let $I=(xy)$ be an ideal of $R$.
Then $(0:I)=(z)$ in $R$, and $\height(I+(0:I))=\height(xy,z)=1<2=\dim R$.
The ring $R/I=S/(xy)$ is a $2$-dimensional hypersurface which does not have an isolated singularity.
We see by \cite[Corollary 2]{HL02} that $R/I$ has infinite $\cm$-representation type.
It follows from Theorem \ref{19}(3a) that $R$ has infinite $\cmp$-representation type.
\end{proof}

\begin{rem}

We remark that the indecomposables in $\cmz$(R) for $R=k[\![ x,y,z ]\!]/(xyz)$ have been classified by Burban and Drozd (see \cite[Theorem 8.6]{BD}).

\end{rem}

We consider constructing from a given hypersurface of infinite $\cmp$-representation type another hypersurface of infinite $\cmp$-representation type.
For this we establish the following lemma, which provides a version of Kn\"{o}rrer's periodicity theorem for $\cmp(R)$.

\begin{lem}\label{40}
Let $(S,\n)$ be a regular local ring, and let $f,g\in S$.
Let $R=S/(f)$ and $R^\sharp=S[\![x]\!]/(f+x^2g)$ be hypersurfaces with $x$ an indeterminate over $S$.
Then the following statements hold.
\begin{enumerate}[\rm(1)]
\item
There is an additive functor
$$
\Phi:\cmp(R)\to\cmp(R^\sharp),\qquad
\cok(A,B)\mapsto\cok\left(\left(\begin{smallmatrix}A&-xE\\xgE&B\end{smallmatrix}\right),\left(\begin{smallmatrix}B&xE\\-xgE&A\end{smallmatrix}\right)\right).
$$
\item
Let $M\in\ind\cmp(R)$ and put $N=\Phi(M)$.
Then one has either $N\in\ind\cmp(R^\sharp)$ or $N\cong X\oplus Y$ for some $X,Y\in\ind\cmp(R^\sharp)$.
\end{enumerate}
\end{lem}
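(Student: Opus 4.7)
The plan is to handle parts (1) and (2) separately, the second relying on the explicit description of the $R$-module $N/xN$ produced in the first. For part (1), a direct block computation using only $AB=BA=fE$ shows that $\tilde A \tilde B = \tilde B \tilde A = (f+x^2g)E$, where $\tilde A = \left(\begin{smallmatrix}A & -xE\\ xgE & B\end{smallmatrix}\right)$ and $\tilde B = \left(\begin{smallmatrix}B & xE\\ -xgE & A\end{smallmatrix}\right)$. Hence $(\tilde A, \tilde B)$ is a matrix factorization of $f+x^2g$ over $S[\![x]\!]$, and $N := \cok_{S[\![x]\!]}\tilde A$ is a maximal Cohen--Macaulay $R^\sharp$-module. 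Reducing $\tilde A$ modulo $x$ yields the block-diagonal matrix $\left(\begin{smallmatrix}A & 0\\ 0 & B\end{smallmatrix}\right)$ over $R^\sharp/(x)=R$, so $N/xN \cong \cok_R A \oplus \cok_R B = M \oplus \syz_R M$ as $R$-modules. To see $N\in\cmp(R^\sharp)$, pick a nonmaximal prime $\p$ of $R$ with $M_\p$ not $R_\p$-free and let $\tilde\p$ be the corresponding nonmaximal prime of $R^\sharp$ containing $x$; the element $x$ is $R^\sharp$-regular (since $R^\sharp$ is Cohen--Macaulay with $R^\sharp/(x)=R$ of dimension $\dim R^\sharp - 1$) and hence $N$-regular, so localizing gives $N_{\tilde\p}/xN_{\tilde\p}\cong M_\p\oplus(\syz_R M)_\p$, and freeness of $N_{\tilde\p}$ would force freeness of $M_\p$, a contradiction. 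Additivity of $\Phi$ follows immediately from the block-diagonal formula, which yields a natural isomorphism $\Phi(M_1\oplus M_2)\cong\Phi(M_1)\oplus\Phi(M_2)$.

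For part (2), decompose $N = N_1 \oplus \cdots \oplus N_s$ into nonzero indecomposable $R^\sharp$-modules. Since $x$ is $N_i$-regular (each $N_i$ is MCM over the Cohen--Macaulay ring $R^\sharp$), Nakayama's lemma guarantees that each $N_i/xN_i$ is a nonzero MCM $R$-module, and $M \oplus \syz_R M \cong N/xN \cong \bigoplus_{i=1}^s N_i/xN_i$. Both $M$ and $\syz_R M$ lie in $\ind\cmp(R)$ (the latter by Lemma \ref{8} together with the indecomposability of syzygies over the hypersurface $R$), so $M \oplus \syz_R M$ admits at most two indecomposable summands. Krull--Schmidt therefore forces $s \le 2$, giving either $N \in \ind\cmp(R^\sharp)$ or $N \cong N_1 \oplus N_2$.

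It remains, in the case $s = 2$, to verify that both $N_i$ belong to $\cmp(R^\sharp)$. Suppose for contradiction that $N_1 \in \cmz(R^\sharp)$: then for each nonmaximal prime $\p$ of $R$ the corresponding prime $\tilde\p$ is nonmaximal in $R^\sharp$ with $(N_1)_{\tilde\p}$ free, whence $(N_1/xN_1)_\p$ is $R_\p$-free and $N_1/xN_1 \in \cmz(R)$. But $N_1/xN_1$ is a nonzero direct summand of $M \oplus \syz_R M$, every nonzero summand of which lies in $\cmp(R)$ by Krull--Schmidt, a contradiction. The main technical obstacle is the clean invocation of Krull--Schmidt, used both to bound $s$ and to describe summands of $M \oplus \syz_R M$; a secondary point is verifying that $x\in R^\sharp$ is a nonzerodivisor, which reduces to the observation that $f\notin xS[\![x]\!]$, trivially true when $f\ne 0$ (and $\cmp(R)=\emptyset$ otherwise, so there is nothing to prove).
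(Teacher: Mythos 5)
Your proof is correct and follows essentially the same approach as the paper's: verify the matrix-factorization identity, compute $N/xN\cong M\oplus\syz_R M$, use the indecomposability of $M$ and $\syz_R M$ over the hypersurface $R$, and localize at the prime $\p S[\![x]\!]+(x)$ to detect non-freeness. The one small gap is in part (1): asserting that $\Phi$ is an additive functor requires defining it on morphisms, not just on objects; the paper does this explicitly by sending a morphism $(V,W)\colon(A,B)\to(A',B')$ of matrix factorizations to $\bigl(\left(\begin{smallmatrix}V&0\\0&W\end{smallmatrix}\right),\left(\begin{smallmatrix}W&0\\0&V\end{smallmatrix}\right)\bigr)$ and checking compatibility, whereas you only note the behavior on direct sums. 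Your part (2) is phrased as a contradiction using $\cmz$ membership rather than the paper's direct localization computation, but the underlying argument is the same.
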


\begin{proof}
(1) It holds that $\left(\begin{smallmatrix}A&-xE\\xgE&B\end{smallmatrix}\right)\left(\begin{smallmatrix}B&xE\\-xgE&A\end{smallmatrix}\right)=\left(\begin{smallmatrix}B&xE\\-xgE&A\end{smallmatrix}\right)\left(\begin{smallmatrix}A&-xE\\xgE&B\end{smallmatrix}\right)=(f+x^2g)E$.
If $(V,W):(A,B)\to(A',B')$ is a morphism of matrix factorizations of $f$ over $S$, then $\left(\left(\begin{smallmatrix}V&0\\0&W\end{smallmatrix}\right),\left(\begin{smallmatrix}W&0\\0&V\end{smallmatrix}\right)\right):\left(\left(\begin{smallmatrix}A&-xE\\xgE&B\end{smallmatrix}\right),\left(\begin{smallmatrix}B&xE\\-xgE&A\end{smallmatrix}\right)\right)\to\left(\left(\begin{smallmatrix}A'&-xE\\xgE&B'\end{smallmatrix}\right),\left(\begin{smallmatrix}B'&xE\\-xgE&A'\end{smallmatrix}\right)\right)$ is a morphism of matrix factorizations of $f+x^2g$ over $S[\![x]\!]$.
We observe that $\Phi$ defines an additive functor from $\cm(R)$ to $\cm(R^\sharp)$.

Fix $M\in\cmp(R)$.
Let $(A,B)$ be the corresponding matrix factorization.
Set $N=\cok_{S[\![x]\!]}\left(\begin{smallmatrix}A&-xE\\xgE&B\end{smallmatrix}\right)$.
There is a nonmaximal prime ideal $\p$ of $S$ such that $M_\p$ is not $R_\p$-free.
Put $\q=\p S[\![x]\!]+xS[\![x]\!]$.
We see that $\q$ is a nonmaximal prime ideal of $S[\![x]\!]$.
Suppose that $N_\q\cong(R^\sharp)_\q^{\oplus n}$ for some $n$.
Then
$$
R_\p^{\oplus n}
\cong((R^\sharp/xR^\sharp)^{\oplus n})_\q
\cong N_\q/xN_\q
\cong\cok_{S[\![x]\!]_\q}\left(\begin{smallmatrix}A&0\\0&B\end{smallmatrix}\right)
\cong\cok_{S_\p}A\oplus\cok_{S_\p}B
\cong M_\p\oplus(\syz_RM)_\p,
$$
which implies that $M_\p$ is $R_\p$-free, a contradiction.
Therefore $N_\q$ is not $(R^\sharp)_\q$-free, and we obtain $N\in\cmp(R^\sharp)$.
Thus $\Phi$ induces an additive functor from $\cmp(R)$ to $\cmp(R^\sharp)$.

(2) Let $(A,B)$ be the matrix factorization which gives $M$.
Then $N=\cok_{S[\![x]\!]}\left(\begin{smallmatrix}A&-xE\\xgE&B\end{smallmatrix}\right)$.
Suppose that $N$ is decomposable.
Then $N\cong X\oplus Y$ for some nonzero modules $X,Y\in\cm(R^\sharp)$.
It holds that
$$
X/xX\oplus Y/xY
\cong N/xN
\cong\cok_{S}\left(\begin{smallmatrix}A&0\\0&B\end{smallmatrix}\right)
\cong\cok_SA\oplus\cok_SB
\cong M\oplus\syz_RM.
$$
Since $R$ is Gorenstein, not only $M$ but also $\syz_RM$ is indecomposable; see \cite[Lemma 8.17]{Y}.
Nakayama's lemma guarantees that $X/xX$ and $Y/xY$ are nonzero, and both $X$ and $Y$ have to be indecomposable.
We may assume that $M\cong X/xX$ and $\syz_RM\cong Y/xY$.
Take a nonmaximal prime ideal $\p$ of $S$ such that $M_\p$ is not $R_\p$-free.
Then $\q:=\p S[\![x]\!]+xS[\![x]\!]$ is a nonmaximal prime ideal of $S[\![x]\!]$ as in the proof of (1).
We easily see that the $R_\p$-module $(\syz_RM)_\p$ is not free.
Now it follows that neither $X_\q$ nor $Y_\q$ is free over $(R^\sharp)_\q$, which shows that $X,Y\in\cmp(R^\sharp)$.
\end{proof}

Infinite $\cmp$-representation type ascends from $R$ to $R^\sharp$.

\begin{prop}\label{47}
Let $(S,\n)$ be a regular local ring and $f,g\in S$.
Let $R=S/(f)$ and $R^\sharp=S[\![x]\!]/(f+x^2g)$ be hypersurfaces with $x$ an indeterminate.
If $R$ has infinite $\cmp$-representation type, then so does $R^\sharp$.
\end{prop}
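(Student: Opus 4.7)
The natural approach is to run the argument through the Kn\"orrer-type functor $\Phi\colon\cmp(R)\to\cmp(R^\sharp)$ of Lemma \ref{40}(1), together with the reduction-mod-$x$ formula
\[
\Phi(M)/x\,\Phi(M)\ \cong\ \cok_SA\oplus\cok_SB\ =\ M\oplus\syz_RM
\]
extracted from the proof of Lemma \ref{40}(2) (here $(A,B)$ is a matrix factorization of $f$ giving $M$, and we use the identification $R^\sharp/xR^\sharp=R$). I would argue the contrapositive: assuming $R^\sharp$ has finite $\cmp$-representation type, show the same for $R$.

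Suppose therefore that $\ind\cmp(R^\sharp)=\{X_1,\dots,X_n\}$ is finite, and let $\{M_\lambda\}_{\lambda\in\Lambda}$ enumerate $\ind\cmp(R)$. Put $N_\lambda:=\Phi(M_\lambda)\in\cmp(R^\sharp)$. By Lemma \ref{40}(2), each $N_\lambda$ is isomorphic either to some $X_j$ or to some $X_j\oplus X_k$, so the set $\{N_\lambda\}/{\cong}$ has cardinality at most $n+\binom{n+1}{2}$. If $\Lambda$ were infinite, pigeonhole would yield an infinite subfamily with $N_{\lambda_1}\cong N_{\lambda_2}\cong\cdots$ as $R^\sharp$-modules; reducing modulo $x$ would then give $M_{\lambda_k}\oplus\syz_RM_{\lambda_k}$ all mutually isomorphic as $R$-modules.

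Now $R$ is Gorenstein, being a hypersurface, so by \cite[Lemma 8.17]{Y} the syzygy $\syz_RM_{\lambda_k}$ is indecomposable, and by Lemma \ref{8} it lies in $\cmp(R)$. Thus $M_{\lambda_k}\oplus\syz_RM_{\lambda_k}$ is a direct sum of exactly two indecomposable summands, and Krull--Schmidt forces each $M_{\lambda_k}$ to belong to the fixed two-element set $\{M_{\lambda_1},\syz_RM_{\lambda_1}\}$. This contradicts the pairwise non-isomorphism of the $M_{\lambda_k}$, so $\Lambda$ must have been finite.

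The only delicate step is the appeal to Krull--Schmidt over $R$: if $S$ is not already Henselian, one should first pass to the $\n$-adic completion and verify that the relevant finiteness statements descend, and that indecomposability and membership in $\cmp$ are preserved well enough for the pigeonhole to survive---the same routine maneuver used, e.g., in the proof of Lemma \ref{24}. Aside from this minor technicality, the proof is a direct pigeonhole assembly of Lemma \ref{40}(2) with the Gorensteinness of $R$.
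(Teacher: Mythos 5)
Your proof is correct and follows essentially the same route as the paper's: apply the functor $\Phi$ of Lemma \ref{40}, reduce modulo $x$ to recover $M\oplus\syz_RM$, and count using the bound $n+\binom{n+1}{2}$. The only difference is cosmetic — you argue the contrapositive with a pigeonhole step and invoke Krull--Schmidt (rightly flagging the completion maneuver if $R$ is not Henselian), whereas the paper preselects an infinite family with $M_i\ncong M_j$ and $M_i\ncong\syz M_j$ so that the images $\Phi(M_i)$ are pairwise non-isomorphic from the start; both versions rest on the same indecomposability input from \cite[Lemma 8.17]{Y}.
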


\begin{proof}
Pick any $M_1\in\ind\cmp(R)$.
The set $\ind\cmp(R)\setminus\{M_1,\syz M_1\}$ is infinite, and we pick any $M_2$ in this set.
The set $\ind\cmp(R)\setminus\{M_1,\syz M_1,M_2,\syz M_2\}$ is infinite, and we pick any $M_3$ in it.
Iterating this procedure, we obtain modules $M_1,M_2,M_3,\dots$ in $\ind\cmp(R)$ such that $M_i\ncong M_j$ and $M_i\cong\syz M_j$ for all $i\ne j$.
We put $N_i=\Phi M_i$ for each $i$, where $\Phi$ is the functor defined in Lemma \ref{40}.
Then by the lemma $N_i$ is either in $\ind\cmp(R^\sharp)$ or isomorphic to $X_i\oplus Y_i$ for some $X_i,Y_i\in\ind\cmp(R^\sharp)$.

Assume $N_i\cong N_j$ for some $i\ne j$.
Then, as we saw in the proof of the lemma, there are isomorphisms $M_i\oplus\syz M_i\cong N_i/xN_i\cong N_j/xN_j\cong M_j\oplus\syz M_j$ and the modules $M_i,\syz M_i,M_j,\syz M_j$ are indecomposable.
This contradicts the choice of these modules.
Hence we have $N_i\ncong N_j$ for all $i\ne j$.

Suppose that there are only a finite number, say $n$, of indecomposable modules in $\cmp(R)$.
Then it is seen that the set $\{N_1,N_2,N_3,\dots\}/_{\cong}$ has cardinality at most $n+\binom{n+1}{2}$, which is a contradiction.
We now conclude that $R^\sharp$ has infinite $\cmp$-representation type, and the proof of the proposition is completed.
\end{proof}

Here is an application of Proposition \ref{47}.

\begin{cor}
Let $R$ be a $2$-dimensional complete local hypersurface with algebraically closed residue field $k$ of characteristic $0$ and not having an isolated singularity.
Suppose that $R$ has multiplicity at most $2$.
If $R$ has finite $\cmp$-representation type, then $R\cong k[\![x,y,z]\!]/(f)$ with $f=x^2+y^2$ or $f=x^2+y^2z$, and hence $R$ has countable $\cm$-representation type.
\end{cor}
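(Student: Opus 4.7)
The plan is to bring $f$ to normal form using the formal splitting lemma, dispatch the high-rank cases by the non-isolated-singularity hypothesis, and reduce the rank-one case to the one-dimensional classification of Theorem \ref{46} via the Kn\"orrer-type descent of Proposition \ref{47}. First I would note that since $R$ is a non-regular (else isolated) two-dimensional complete local hypersurface of multiplicity at most $2$, Cohen's structure theorem and the equality of multiplicity with the order of the defining equation for a hypersurface force $R\cong k[\![x,y,z]\!]/(f)$ with $f\in\mathfrak{n}^2\setminus\mathfrak{n}^3$, where $\mathfrak{n}=(x,y,z)$. Because $k$ is algebraically closed of characteristic zero, the formal splitting (Morse) lemma, valid for formal power series in characteristic $\ne 2$, delivers an automorphism of $k[\![x,y,z]\!]$ carrying $f$ to $Q+h$, where $Q$ is a diagonalized non-degenerate quadratic form of rank $r\in\{1,2,3\}$ and $h$ has order at least $3$ in the remaining $3-r$ variables. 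Rank three yields $R\cong k[\![x,y,z]\!]/(x^2+y^2+z^2)$, an isolated $(A_1)$ singularity, contradicting the hypothesis. Rank two gives $f\sim x^2+y^2+h(z)$ with $h\in(z)^3$; if $h\ne 0$, then after absorbing a unit, $h=cz^n$ for some $n\ge 3$, giving an isolated $A_{n-1}$ singularity, again a contradiction; hence $h=0$ and $R\cong k[\![x,y,z]\!]/(x^2+y^2)$, the first claimed form.

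The rank-one case is the core of the argument. Here $f\sim x^2+h(y,z)$ with $h\in(y,z)^3$. If $h=0$, then in $R=k[\![x,y,z]\!]/(x^2)$ the element $x$ satisfies $(0:_R x)=(x)$, so Corollary \ref{30} forces $\dim R=1$, a contradiction. Hence $h\ne 0$, and setting $T:=k[\![y,z]\!]$, I view $R\cong T[\![x]\!]/(h+x^2\cdot 1)$ as the ring $R^\sharp$ of Proposition \ref{47} with the triple $(S,f,g)=(T,h,1)$. The contrapositive of that proposition produces $T/(h)$ as a one-dimensional complete local Gorenstein ring of finite $\cmp$-representation type. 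A Jacobian computation, valid because $\mathrm{char}\,k\ne 2$, identifies $\sing R$ with the image of $V(x)\cap\sing(T/(h))$ inside $\spec R$, so if $T/(h)$ were an isolated singularity then $R$ would be too, contradicting the hypothesis. Theorem \ref{46} thus identifies $T/(h)$ with either $k[\![a,b]\!]/(a^2)$ or $k[\![a,b]\!]/(a^2b)$; the first is ruled out since $\operatorname{ord}(h)\ge 3$, and in the second case $h=up^2q$ for a unit $u$ and a regular system of parameters $p,q$ of $T$. Because $u$ admits a square root in $T$ (as $k$ is algebraically closed of characteristic $\ne 2$), absorbing $\sqrt u$ into $p$ yields $h=p^2q$, hence $R\cong k[\![x,p,q]\!]/(x^2+p^2q)$, the second claimed form. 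The concluding assertion about countable $\cm$-representation type is then immediate from the Buchweitz--Greuel--Schreyer classification cited before Theorem \ref{5}.

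The main obstacle is the rank-one reduction: presenting $R$ in the Proposition \ref{47} form with $g=1$ and verifying through the Jacobian criterion that $T/(h)$ is not an isolated singularity are both essential, since otherwise Theorem \ref{46} would be vacuous on $T/(h)$ and no normal form for $h$ would emerge. The remaining steps reduce cleanly to the splitting lemma, a direct application of Corollary \ref{30}, and standard manipulations with units and regular systems of parameters.
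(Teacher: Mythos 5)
Your proof is correct and follows essentially the same route as the paper's: reduce to $R\cong k[\![x,y,z]\!]/(x^2+g(y,z))$ (the paper does this via Cohen's structure theorem plus Weierstrass preparation rather than the full splitting lemma), descend to the curve $k[\![y,z]\!]/(g)$ by the contrapositive of Proposition \ref{47}, and invoke Theorem \ref{46}. Your extra rank case division and your explicit checks that $h\ne0$ and that $T/(h)$ is not an isolated singularity are points the paper's terser argument leaves implicit, but they do not constitute a different method.
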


\begin{proof}
If $\e(R)=1$, then $R$ is regular, which contradicts the assumption that $R$ does not have an isolated singularity.
Hence $\e(R)=2$, and the combination of Cohen's structure theorem and the Weierstrass preparation theorem shows $R\cong k[\![x,y,z]\!]/(x^2+g)$ for some $g\in k[\![y,z]\!]$; see \cite[Proof of Theorem 8.8]{Y}.
It follows from Proposition \ref{47} that the $1$-dimensional hypersurface $S:=k[\![y,z]\!]/(g)$ has finite $\cmp$-representation type.
By virtue of Theorem \ref{46}, we obtain $g=y^2$ or $g=y^2z$ after changing variables (i.e., after applying a $k$-algebra automorphism of $k[\![y,z]\!]$).
We observe that $R$ is isomorphic to either $k[\![x,y,z]\!]/(x^2+y^2)$ or $k[\![x,y,z]\!]/(x^2+y^2z)$.
It follows from \cite[Propositions 14.17 and 14.19]{LW} that $R$ has countable $\cm$-representation type.
\end{proof}

Proposition \ref{47} can provide a lot of examples of hypersurfaces of infinite $\cmp$-representation type of higher dimension.
The following example is not covered by this proposition or any other general result given in this paper.

\begin{ex}\label{57}
Let $S$ be a regular local ring with a regular system of parameters $x,y,z$.
Let
$$
f=x^n+x^2ya+y^2b
$$
be an irreducible element of $S$ with $n\ge4$ and $a,b\in S$.
Then the hypersurface $R=S/(f)$ has infinite $\cmp$-representation type.
\end{ex}

\begin{proof}
Putting $g=x^2a+yb$, we have $f=x^n+yg$.
For each integer $i\ge0$ we define a pair of matrices $A_i=
\left(\begin{smallmatrix}
x^2&xz^i\\
0&-x^2
\end{smallmatrix}\right)$ and $B_i=
\left(\begin{smallmatrix}
x^{n-2}&x^{n-3}z^i\\
0&-x^{n-2}
\end{smallmatrix}\right)$, which gives a matrix factorization of $x^n$ over $S$ and $S/(y)$.
Define another pair of matrices $A_i^\sharp=
\left(\begin{smallmatrix}
A_i&-yE\\
gE&B_i
\end{smallmatrix}\right)$ and $B_i^\sharp=
\left(\begin{smallmatrix}
B_i&yE\\
-gE&A_i
\end{smallmatrix}\right)$.
These form a matrix factorization of $f$ over $S$, and hence $M_i:=\cok_S(A_i^\sharp)$ is a maximal Cohen--Macaulay $R$-module.
There are equalities
$$
\fitt_3^S(M_i)=\I_1(A_i^\sharp)=(x^2,xz^i,x^{n-2},x^{n-3}z^i,y,g)S=(x^2,xz^i,y)S
$$
of ideals of $S$, where we use $n\ge4$.

Suppose that $M_i\cong M_j$ for some $i<j$.
Then $(x^2,xz^i,y)S=(x^2,xz^j,y)S$ and $(x^2,xz^i)\overline{S}=(x^2,xz^j)\overline{S}$, where $\overline S:=S/(y)$ is a regular local ring having the regular system of parameters $x,z$.
Hence $z^i\in(x,z^j)\overline S$ and $z^i\in z^j\widetilde S$ where $\widetilde S:=\overline S/x\overline S$ is a discrete valuation ring with $z$ a uniformizer.
This gives a contradiction, and we see that $M_i\ncong M_j$ for all $i\ne j$.

Let $\p=(x,y)S\in\spec S$, and fix an integer $i\ge0$.
Note that all the entries of $A_i,B_i$ are in $\p$ since $n\ge4$.
It follows from \cite[Remark 7.5]{Y} that the $R_\p$-module $(M_i)_\p$ does not have a nonzero free summand.
Since $f$ is assumed to be irreducible, $R$ is an integral domain.
Hence each nonzero direct summand $X$ of the maximal Cohen--Macaulay $R$-module $M_i$ has positive rank, and hence has full support.
Therefore $X_\p\ne0$, and thus all the indecomposable direct summands of $M_i$ belong to $\ind\cmp(R)$.
Since all the $M_i$ are generated by four elements, it is observed that $\ind\cmp(R)$ is an infinite set.
\end{proof}

To prove our next result, we prepare a lemma on unique factorization domains.

\begin{lem}\label{21}
Let $R$ be a Cohen--Macaulay factorial local ring with $\dim R\ge3$.
Let $I$ be an ideal of $R$ generated by two elements.
Then $\depth R/I>0$.
\end{lem}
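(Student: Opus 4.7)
The plan is to reduce the problem to the case of a regular sequence by pulling out the greatest common divisor, and then use a depth-lemma argument on a short exact sequence.

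First, I would dispose of the degenerate cases. If $a=0$ or $b=0$, then $I$ is principal (or zero), so $R/I$ is a quotient of $R$ by a nonzerodivisor (or is $R$ itself), hence has depth $\ge \dim R - 1 \ge 2$. So assume $a,b \in R$ are both nonzero. Since $R$ is factorial, I can set $d=\gcd(a,b)$ and write $a=da'$, $b=db'$ with $\gcd(a',b')=1$. Put $J=(a',b')$ so that $I=dJ$.

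Next, I would show that $a',b'$ is a regular sequence. If either $a'$ or $b'$ is a unit, then $J=R$ and the claim below is trivial, so assume both are nonunits. The key point is that in a UFD, every height-one prime is principal, generated by a prime element; hence any minimal prime $\mathfrak{p}$ of $(a')$ has the form $(p)$ for some irreducible $p\mid a'$. Since $\gcd(a',b')=1$, such $p$ cannot divide $b'$, so $b'\notin\mathfrak{p}$. This forces $\operatorname{ht}(J)=2$, and since $R$ is Cohen--Macaulay, a system of parameters (or any ideal of height equal to the number of generators) is a regular sequence. So $a',b'$ is regular, and $R/J$ is Cohen--Macaulay of dimension $\ge \dim R-2 \ge 1$.

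The main step is the short exact sequence
\[
0 \longrightarrow R/J \xrightarrow{\;\cdot d\;} R/I \longrightarrow R/(d) \longrightarrow 0.
\]
Well-definedness of the first map is clear since $dJ=I$, and injectivity follows because $R$ is a domain and $d\neq 0$: if $dr\in I=dJ$, then $r\in J$. The cokernel is $R/(I+(d))=R/(d)$ because $I\subseteq (d)$.

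Finally, I apply the depth lemma. We have $\depth R/J \ge 1$ from the regular sequence argument, and $\depth R/(d)\ge \dim R-1\ge 2$ when $d$ is a nonunit nonzerodivisor (trivially if $d$ is a unit, the sequence degenerates and $R/I\cong R/J$). The standard inequality $\depth R/I \ge \min\{\depth R/J,\,\depth R/(d)\}$ then yields $\depth R/I \ge 1$, which is what we want. The only delicate point is verifying that $a',b'$ is a regular sequence, but this is a clean consequence of factoriality (height-one primes principal) combined with the Cohen--Macaulay hypothesis.
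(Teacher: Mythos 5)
Your proof is correct and follows essentially the same route as the paper: factor out $d=\gcd(a,b)$, use the exact sequence $0\to R/J\xrightarrow{\,d\,}R/I\to R/(d)\to 0$, show $a',b'$ is a regular sequence via the fact that height-one primes in a UFD are principal together with the Cohen--Macaulay hypothesis, and conclude by the depth lemma. The only difference is that you spell out the degenerate cases (a generator zero, $d$ or $a',b'$ units) explicitly, which the paper leaves implicit.
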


\begin{proof}
We write $I=(x,y)R$ and put $g=\gcd\{x,y\}$.
Then $x=gx'$ and $y=gy'$ for some $x',y'\in R$, and we set $I'=(x',y')R$.
There is an exact sequence $0\to R/I'\xrightarrow{g} R/I\to R/gR\to0$ of $R$-modules.
As $R$ is Cohen--Macaulay, we have $\depth R\ge3$ and $\height I'=\grade I'$.
Since $R$ is a domain and $g\ne0$, we have $\depth R/gR=\depth R-1\ge2$.
If $\height I'=1$, then $I'$ is contained in a principal prime ideal, which contradicts the fact that $x',y'$ are coprime.
Hence $\height I'=2$, and the sequence $x',y'$ is $R$-regular.
It follows that $\depth R/I'=\depth R-2\ge1$, and the depth lemma implies $\depth R/I\ge1$.
\end{proof}

Now we can prove the following theorem, which provides the shape of a hypersurface of infinite $\cmp$-representation type.

\begin{thm}\label{22}
Let $(S,\n)$ be a regular local ring and $x,y\in\n$.
Suppose that the ideal $(x,y)$ of $S$ is neither prime nor $\n$-primary.
Then $R=S/(xy)$ has infinite $\cmp$-representation type.
\end{thm}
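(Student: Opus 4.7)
The plan is to split according to whether the ideal $(x,y)\subseteq S$ is principal or has height $2$. In the \emph{principal} case, the UFD property of $S$ lets me write $(x,y)=(g)$ and, after adjusting by units, take $y=g$ and $x=gz$ for some $z\in S$; then $R=S/(g^2z)$ with $g$ reducible (from non-primality of $(g)$) and $\dim S\ge 2$ (from $(g)$ being non-$\n$-primary). If $\dim S=2$, Corollary \ref{39} would force $g^2z$ into one of the forms $p^2$, $p^2q$, or $p^2qr$ with $p$ irreducible, contradicting the reducibility of $g$. If $\dim S\ge 3$, take $I=(g)R$; since $g\cdot gz=g^2z=0$ in $R$ we have $(0:_R g)=(gz)R$, so $\v(I)=\v(g)\subseteq\v(gz)=\v(0:I)$ in $\spec R$, and Theorem \ref{19}(2b) finishes this case.

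The remaining case is when $(x,y)$ has height $2$, making $x,y$ a coprime regular sequence in $S$ with $\dim S\ge 3$ and $S/(x,y)$ a non-domain. Suppose $R$ has finite $\cmp$-representation type. Then $\v(x,y)R\subseteq\sing R$, and Corollary \ref{16}(1) combined with $\dim(\v(x,y)R)=\dim S-2$ forces $\dim S=3$, so $\dim R=2$ and $S/(x,y)$ is a one-dimensional CM non-domain. In particular $(x,y)$ admits two distinct height-$2$ minimal primes $\p_1\ne\p_2$ of $S$; choose $e\in\p_2\setminus\p_1$, and for each $n\ge 1$ set
\[
A_n=\begin{pmatrix}x&e^n\\0&y\end{pmatrix},\qquad B_n=\begin{pmatrix}y&-e^n\\0&x\end{pmatrix},
\]
a matrix factorization of $xy$ over $S$. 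Let $M_n=\cok_R A_n$, a MCM $R$-module with $\nu_R(M_n)=2$ and $\fitt_1(M_n)=(x,y,e^n)R$.

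The $M_n$ are pairwise non-isomorphic: reducing modulo $\p_1$ sends their Fitting ideals to the chain $(\bar e^n)$ in the one-dimensional Noetherian domain $S/\p_1$, and since $\bar e$ is a nonzero non-unit, Krull's intersection theorem forces this chain to be strictly decreasing. At the nonmaximal prime $\p_2R$, the Fitting ideal $(x,y,e^n)R_{\p_2R}$ is neither zero ($x\notin (xy)S_{\p_2}$ since $y$ is a non-unit there) nor the unit ideal ($x,y,e\in\p_2$), so $(M_n)_{\p_2R}$ is not a free $R_{\p_2R}$-module and $M_n\in\cmp(R)$. Moreover, $A_n$ has all entries in $\p_2S_{\p_2}$; since this property is preserved under any change of basis $UA_nV$, the matrix factorization cannot be split off a trivial $(1,xy)$-piece, so $(M_n)_{\p_2R}$ has no free direct summand.

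Finally, decompose $M_n=P_n\oplus Q_n$ with $Q_n\in\cmz(R)$. The previous observation forces $(Q_n)_{\p_2R}$ to be free and hence zero, but every nonzero MCM $R$-module has support containing every minimal prime of $R$ (namely $(x)R$ and $(y)R$), both of which lie in $\p_2R$ since $\p_2\supseteq(x,y)$; thus $Q_n=0$ and each $M_n$ lies entirely in $\cmp(R)$. Under the standing assumption of finite $\cmp$-representation type, each $M_n$ is then a direct sum of at most two indecomposables drawn from a fixed finite list, yielding only finitely many isomorphism classes—contradicting the pairwise non-isomorphism of infinitely many $M_n$'s. The main obstacle is the Case 2 matrix-factorization analysis: simultaneously verifying that $M_n\in\cmp(R)$, that the localization at $\p_2R$ has no free direct summand, and that any hypothetical $\cmz$-summand must vanish; once these are in place, the counting of two-generator decompositions over a finite list of indecomposables closes the argument.
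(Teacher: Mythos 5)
Your Case 2 follows the same basic template as the paper's proof (a family of $2\times2$ matrix factorizations $\left(\begin{smallmatrix}x&*\\0&y\end{smallmatrix}\right)$ of $xy$, distinguished by their Fitting ideals and placed in $\cmp(R)$ by localizing at a nonmaximal prime containing all entries), but there is a genuine gap at the point where you produce the off-diagonal entries. You claim that since $S/(x,y)$ is a one-dimensional Cohen--Macaulay non-domain, the ideal $(x,y)$ has two distinct minimal primes $\p_1\ne\p_2$. That is false: $(x,y)$ can be primary to a single prime without being prime. For instance $S=k[\![u,v,w]\!]$, $x=u$, $y=v^2$ satisfies every hypothesis of the theorem ($(u,v^2)$ has height $2$ and is neither prime nor $\n$-primary), yet $\sqrt{(u,v^2)}=(u,v)$ is prime, so no element $e\in\p_2\setminus\p_1$ exists and your construction does not start. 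The paper's proof avoids exactly this by taking the off-diagonal entry to be $a^nb$, where $a$ is a nonzerodivisor on $S/(x,y)$ (supplied by Lemma \ref{21}) and $b$ lies in a minimal prime $\p$ of $(x,y)$ but not in $(x,y)$ itself (such $b$ exists precisely because $(x,y)$ is not prime); the ideals $(x,y,a^nb)$ are then pairwise distinct because $a$ is regular modulo $(x,y)$, with no appeal to a second minimal prime.

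Two further problems. First, your case division is not exhaustive: $(x,y)$ can have height one without being principal, e.g.\ $x=g^2$, $y=gh$ with $g,h$ coprime nonunits gives $(x,y)=g(g,h)$, which is neither principal nor of height $2$, so it falls into neither branch (though your Case 1 argument, run with $I=(\gcd(x,y))R$ instead of a generator of $(x,y)$, would adapt). Second, in the last step of Case 2 you assert that every nonzero maximal Cohen--Macaulay $R$-module has support containing every minimal prime of $R$; this is false in general ($R/(x)$ is maximal Cohen--Macaulay with support $\v(x)$ when $x$ is irreducible), and what you actually need --- that some minimal prime of $R$ in $\supp Q_n$ lies inside $\p_2$ --- can fail when $x$ or $y$ is reducible, since the minimal primes of $R$ are generated by the irreducible factors of $xy$ and these need not all lie in $\p_2$. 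So the conclusion $Q_n=0$ is not established. This last issue is repairable without any support argument: a nonzero complementary summand $Q_n$ is cyclic because $\nu(M_n)=2$, and Lemma \ref{20} says there are only finitely many cyclic maximal Cohen--Macaulay $R$-modules up to isomorphism, which is exactly how the paper closes its counting argument.
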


\begin{proof}
Lemma \ref{21} guarantees that there exists an $S/(x,y)$-regular element $a\in\n$.
Take a minimal prime $\p$ of $(x,y)$.
Since $(x,y)$ is not prime, we can choose an element $b \in \p\setminus(x,y)$.
Set $z_n=a^nb$ for each $n$.
The matrices $A_n=
\left(\begin{smallmatrix}
x & z_n\\
0 & -y
\end{smallmatrix}\right)$ and $B_n=\left(\begin{smallmatrix}
y & z_n\\
0 & -x
\end{smallmatrix}\right)$ with $n\ge1$ form a matrix factorization of $xy$ over $S$, and $M_n=\cok_SA_n$ is a maximal Cohen--Macaulay $R$-module.
Put $I_n:=\I_1(A_n)=(x,y,z^n)\subseteq S$.
Since the $I_n$ are pairwise distinct, the $M_n$ are pairwise nonisomorphic.
If $M_n$ is decomposable, it decomposes into two cyclic $R$-modules, while Lemma \ref{20} says that there are only finitely many such cyclic modules up to isomorphism.
Thus we find infinitely many $n$ such that $M_n$ is indecomposable.
Since $(x,y,z_n)$ is contained in $\p$, each $(M_n)_{\p}$ has no nonzero free summand by \cite[(7.5.1)]{Y}.
In particular, we have $M_n\in\cmp(R)$.
Now it is seen that $R$ has infinite $\cmp$-representation type.
\end{proof}

Applying the above theorem, we can obtain a couple of restrictions for a hypersurface of dimension at least $2$ which is not an integral domain but has finite $\cmp$-representation type.

\begin{cor}\label{69}
Let $R$ be a complete local hypersurface of dimension $d\ge2$ which is not a domain.
Suppose that $R$ has finite $\cmp$-representation type.
Then one has $d=2$, and there exist a complete regular local ring $S$ of dimension $3$ and elements $x,y\in S$ satisfying the following conditions.
\begin{enumerate}[\rm(1)]
\item
$R$ is isomorphic to $S/(xy)$.
\item
$S/(x)$ and $S/(y)$ have finite $\cm$-representation type.
\item
$S/(x,y)$ is a domain of dimension $1$.
\end{enumerate}
\end{cor}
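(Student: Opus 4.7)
The plan combines Cohen's structure theorem, the obstructions in Theorem \ref{22}, the dimension bound in Corollary \ref{16}(1), and a localization argument via Proposition \ref{10} to transfer finiteness of $\cmp$-representation type of $R$ to finiteness of $\cm$-representation type of the two hypersurface factors.

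By Cohen's structure theorem, write $R\cong S/(f)$ for a complete regular local ring $S$ of dimension $d+1\ge 3$ and a nonunit $f\in S$. Since $R$ is not a domain, $f$ factors nontrivially in the UFD $S$; write $f=p_1^{a_1}\cdots p_n^{a_n}$ with distinct irreducibles $p_i$, where either $n\ge 2$ or some $a_i\ge 2$. The pure-power case $n=1$, $a_1\ge 2$ is excluded by Theorem \ref{15}(1) applied to $I=(\bar p_1)$: a direct check gives $(0:_R I)=(\bar p_1^{a_1-1})$ with $\v(I)=\v(0:I)=\spec R$, and $R/I=S/(p_1)$ is MCM over $R$, so Theorem \ref{15}(1) forces $d=1$, a contradiction. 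Thus $n\ge 2$.

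Next I apply Theorem \ref{22} to two successive factorizations. If some $a_i\ge 2$, say $a_1\ge 2$, the split $f=p_1^{a_1}\cdot(p_2^{a_2}\cdots p_n^{a_n})$ gives an ideal $(p_1^{a_1},\,p_2^{a_2}\cdots p_n^{a_n})$ which cannot be $\n$-primary (its height is at most $2<d+1$), nor prime (else it would contain $p_1$, but a short arithmetic check using that $1-up_1^{a_1-1}$ is a unit and $p_1$ is coprime to $p_2\cdots p_n$ shows $p_1$ does not lie in it). Theorem \ref{22} then contradicts finite $\cmp$-type, so $f$ is squarefree. Applying Theorem \ref{22} to $f=p_1\cdot(p_2\cdots p_n)$, the $\n$-primary option is again ruled out by height, so $(p_1,p_2\cdots p_n)$ is prime; it contains some $p_j$ with $j\ge 2$, hence equals $(p_1,p_j)$ by a height comparison (distinct irreducibles form a regular sequence). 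Reducing modulo $p_1$ gives $\prod_{i\ge 2,\,i\ne j}\bar p_i=u$ a unit in the local ring $S/(p_1)$, which is impossible for $n\ge 3$ since a nonempty product of nonunits in a local ring is a nonunit. Thus $n=2$, and one more application of Theorem \ref{22} to $f=p_1\cdot p_2$ gives that $(p_1,p_2)$ is prime of height $2$. Setting $x=p_1$, $y=p_2$ establishes (1) and (3), with $S/(x,y)$ a domain of dimension $d-1$.

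Any prime of $S$ containing both $p_1$ and $p_2$ yields a non-regular localization of $R$, so $\v(p_1,p_2)\subseteq\sing R$ and $\dim\sing R\ge d-1$; Corollary \ref{16}(1) then forces $d-1\le 1$, so $d=2$ and $\dim S=3$. For (2), I would argue that $S/(p_2)$ has finite $\cm$-representation type as follows. Since $R/(\bar p_2)=S/(p_2)$ is MCM over $R$, Proposition \ref{10}(1) yields $\ind\cm(S/(p_2))\subseteq\ind\cm(R)$. The crucial step, which I regard as the main obstacle, is to show every $N\in\ind\cm(S/(p_2))$ actually lies in $\cmp(R)$: at the nonmaximal prime $\p=(p_1,p_2)/(p_1p_2)$ of $R$, the element $p_2$ is a nonzero nonunit of $R_\p=S_{(p_1,p_2)}/(p_1p_2)$ that annihilates $N$, while $N_\p$ is nonzero because $N$ is torsion-free over the two-dimensional domain $S/(p_2)$; hence $N_\p$ cannot be $R_\p$-free, forcing $N\in\cmp(R)$. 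Therefore $|\ind\cm(S/(p_2))|\le|\ind\cmp(R)|<\infty$, and the same argument applies to $S/(p_1)$, completing the proof.
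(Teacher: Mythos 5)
Your proof is correct, and its skeleton --- Cohen's structure theorem, Theorem \ref{22} to force $(x,y)$ to be prime, Corollary \ref{16}(1) to pin down the dimension, and a transfer of finiteness from $\cmp(R)$ to $\cm(S/(x))$ --- is the same as the paper's. The differences lie in how three sub-steps are executed. (i) You carry out a full analysis of the irreducible factorization of $f$ (ruling out repeated factors via Theorem \ref{15}(1) and Theorem \ref{22}, and ruling out three or more factors by the unit argument in $S/(p_1)$), whereas the paper simply writes $f=xy$ for an arbitrary nontrivial factorization and applies Theorem \ref{22} once; your extra care pays off, since it guarantees $\height(x,y)=2$, while the paper's claim that $\dim S/(x,y)=1$ silently excludes the possibility that $x$ and $y$ are associates --- a case your squarefree step disposes of. (ii) You obtain $d=2$ from $\v(p_1,p_2)\subseteq\sing R$ together with $\dim\sing R\le1$, while the paper deduces it from Serre's criterion: $(\RR_{d-2})$ plus $(S_2)$ would make $R$ normal, hence a domain, if $d\ge3$. (iii) For assertion (2) the paper quotes Theorem \ref{19}(3a) (the Gorenstein case); you instead reprove, inline, exactly the domain case --- your observation that $N_\p\ne0$ because $N$ is torsion-free over the domain $S/(p_2)$ is Claim \ref{ir} in the proof of Theorem \ref{19}, so this step could be shortened to a citation of Theorem \ref{19}(3b). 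One cosmetic remark: the equality $(p_1,p_2\cdots p_n)=(p_1,p_j)$ follows from the two evident containments (primality puts $p_j$ in the left-hand side, and $p_j\mid p_2\cdots p_n$ gives the reverse inclusion), not from a height comparison; a height comparison alone would only show that the prime is minimal over $(p_1,p_j)$, not that the two ideals coincide.
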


\begin{proof}
Corollary \ref{16}(1) says that $R$ satisfies Serre's condition $(\RR_{d-2})$.
Suppose $d\ge3$.
Then $R$ satisfies $(\RR_1)$, and hence it is normal.
In particular, $R$ is a domain, contrary to our assumption.
Therefore, we have to have $d=2$.
Cohen's structure theorem yields $R\cong S/fS$ for some $3$-dimensional complete regular local ring $(S,\n)$ and $f\in\n\setminus\n^2$.
As $R$ is not a domain, there are elements $x,y\in S$ with $f=xy$.
Since $\dim S=3$, the ideal $(x,y)S$ is not $\n$-primary.
Hence $\dim S/(x,y)S=1$, and $S/(x,y)S$ is a domain by Theorem \ref{22}.
We have $\dim R=\dim R/xR=2$, $(0:_Rx)=yR$ and $\height(xR+(0:_Rx))<2$.
It follows from Theorem \ref{19}(3a) that $S/xS$ has finite $\cm$-representation type, and similarly so does $S/yS$.
\end{proof}

Proposition \ref{47} gives an ascent property of infinite $\cmp$-representation type.
Now we presents a descent property of infinite $\cmp$-representation type.

\begin{thm}\label{52}
Let $\phi:(R,\m,k)\to(S,\n,l)$ be a finite local homomorphism of Cohen--Macaulay local rings of dimension $d$ such that $S$ is a domain.
Set $\p=\ker\phi$ and assume the following.
\begin{enumerate}[\quad\rm(a)]
\item
The induced embedding $R/\p\hookrightarrow S$ is birational.
\item
There exists $\q\in\v(\p)\setminus\{\m\}$ such that $R_\q$ is not a direct summand of $S_\q$.
\end{enumerate}
If $S$ has infinite $\cm$-representation type, then $R$ has infinite $\cmp$-representation type.
\end{thm}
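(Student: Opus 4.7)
The plan is to pull back an infinite family $\{N_i\}$ of pairwise non-isomorphic indecomposable modules in $\cm(S)$ along $\phi$, and show that the resulting $R$-modules are again pairwise non-isomorphic, indecomposable, and this time lie in $\cmp(R)$.

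First I would dispatch the transfer of structure. Because $\phi$ is a local finite homomorphism of Cohen--Macaulay local rings of the same dimension $d$ and depth is preserved under such maps, each $N_i$ belongs to $\cm(R)$. Because $S$ is a Cohen--Macaulay domain, its only associated prime is $(0)$, so $N_i$ is torsion-free over $S$, hence over $R/\p$ via the birational inclusion in (a). Lemma \ref{31} then yields indecomposability of $N_i$ over $R/\p$, and hence over $R$ since $\p$ kills $N_i$; the same argument upgrades to $\Hom_R(N_i,N_j)=\Hom_S(N_i,N_j)$, so the $N_i$ remain pairwise non-isomorphic as $R$-modules.

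The heart of the proof is showing each $N_i\in\cmp(R)$, and this is where hypothesis (b) becomes essential. I would test non-freeness at the prescribed prime $\q\in\v(\p)\setminus\{\m\}$. Since $\ann_R(N_i)=\p\subseteq\q$, we have $(N_i)_\q\ne 0$. Suppose for contradiction that $(N_i)_\q\cong R_\q^{\oplus m}$ for some $m\ge 1$. If $\p R_\q\ne 0$, then $\p R_\q$ annihilates $R_\q^{\oplus m}$, forcing $\p R_\q=0$, which is a contradiction. Otherwise $\p R_\q=0$, in which case $R_\q\hookrightarrow S_\q$ is an injection, and the $S_\q$-module structure on $(N_i)_\q\cong R_\q^{\oplus m}$ yields an $R_\q$-algebra map $\sigma\colon S_\q\to\End_{R_\q}(R_\q^{\oplus m})$. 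Composing $\sigma$ with the $R_\q$-linear projection onto the $(1,1)$-entry (with respect to the standard basis) produces $\psi\colon S_\q\to R_\q$ which restricts to the identity on $R_\q$ (since $\sigma(r)$ acts as scalar multiplication by $r$), splitting $R_\q\hookrightarrow S_\q$ as $R_\q$-modules and contradicting (b).

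The main obstacle is precisely this last sub-case $\p R_\q=0$: hypothesis (b) is automatic when $\p R_\q\ne 0$, and the non-trivial content lies in extracting an $R_\q$-linear retraction of $R_\q\hookrightarrow S_\q$ from the forced $R_\q$-algebra map $\sigma$ via a single coordinate projection. The preceding steps are essentially bookkeeping on finite/birational extensions.
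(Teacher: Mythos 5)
Your proof is correct. The preparatory steps---transfer of the maximal Cohen--Macaulay property along the finite local map, torsion-freeness over $R/\p$, indecomposability and pairwise non-isomorphism over $R$ via Lemma \ref{31} and the equality $\Hom_{R/\p}(N_i,N_j)=\Hom_S(N_i,N_j)$, and the non-vanishing of $(N_i)_\q$---all match what the paper does. Where you genuinely diverge is in proving that $(N_i)_\q$ is not $R_\q$-free. The paper first restricts to $\ind\cmz(S)$ (using Lemma \ref{c2}), observes that $\Ext_S^1(M,\syz_SM)$ has finite length and therefore vanishes after localizing at the nonmaximal prime $\q$, so that $M_\q$ becomes a direct summand of $S_\q^{\oplus n}$; if $M_\q$ were $R_\q$-free, then $R_\q$ would be a direct summand of $S_\q^{\oplus n}$, and locality of $R_\q$ would force $R_\q$ to be a direct summand of $S_\q$ itself, contradicting (b). You instead exploit the $S_\q$-module structure on the localization directly: an isomorphism $(N_i)_\q\cong R_\q^{\oplus m}$ with $m\ge1$ forces $\p R_\q=0$ (since $\p$ kills $N_i$ while a nonzero free module is faithful), and then the structure map $S_\q\to\End_{R_\q}(R_\q^{\oplus m})$ followed by a coordinate projection is an $R_\q$-linear retraction of the injection $R_\q\hookrightarrow S_\q$, again contradicting (b). Your route is more elementary (no Ext-vanishing and no summand-cancellation step) and applies to every module in $\ind\cm(S)$ rather than only those in $\ind\cmz(S)$, so Lemma \ref{c2} becomes unnecessary. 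What the paper's version buys is that its key claim holds for an arbitrary indecomposable $R$-module direct summand $X$ of $M$, which need not carry an $S$-module structure; your retraction trick cannot reproduce that generality, but the theorem itself only requires the case $X=M$, so your argument suffices.
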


\begin{proof}
We prove the theorem by establishing several claims.

\setcounter{claim}{0}
\begin{claim}\label{48}
Let $X\ne0$ be an $R$-submodule of a maximal Cohen--Macaulay $S$-module $M$.
Then $X_\q\ne0$.
\end{claim}

\begin{proof}[Proof of Claim]
Assume $X_\q=0$.
Then there exists an element $s\in\ann_RX$ such that $s\notin\q$.
As $\p\subseteq\q$, we have $s\notin\p$, which means $\phi(s)\ne0$.
Choose a nonzero element $x\in X$.
Since $s$ annihilates $X$, we have $0=s\cdot x=\phi(s)x$ in $M$.
This contradicts the fact that $M$ is torsion-free over the domain $S$.
\renewcommand{\qedsymbol}{$\square$}
\end{proof}

\begin{claim}\label{49}
Let $M\in\cmz(S)$.
Let $X$ be an indecomposable $R$-module which is a direct summand of $M$.
Then $X\in\ind\cmp(R)$.
\end{claim}

\begin{proof}[Proof of Claim]
As $\depth_RM=\depth_SM\ge d$, we have $M\in\cm(R)$ and hence $X\in\ind\cm(R)$.
To show the claim, it suffices to verify that $X_\q$ is not $R_\q$-free.

Take an exact sequence $\sigma:0\to\syz_SM\to S^{\oplus n}\to M\to0$.
Since $M$ belongs to $\cmz(S)$, the $S$-module $E:=\Ext_S^1(M,\syz_SM)$ has finite length.
The induced field extension $k\hookrightarrow l$ is finite because so is the homomorphism $\phi$, and hence $E$ also has finite length as an $R$-module.
As $\q$ is a nonmaximal prime ideal of $R$, we have $0=E_\q=\Ext_{S_\q}^1(M_\q,(\syz_SM)_\q)$, and the exact sequence $\sigma_\q:0\to(\syz_SM)_\q\to S_\q^{\oplus n}\to M_\q\to0$ corresponds to an element in this Ext module.
Hence $\sigma_\q$ has to split, and $M_\q$ is a direct summand of $S_\q^{\oplus n}$ as an $S_\q$-module.
(Note that $S_\q$ is not necessarily a local ring.)
The $R_\q$-module $X_\q$ is a direct summand of $M_\q$, and is nonzero by Claim \ref{48}.

Suppose that $X_\q$ is $R_\q$-free.
Then $R_\q$ is a direct summand of $S_\q^{\oplus n}$ in $\mod R_\q$.
As $R_\q$ is a local ring, $R_\q$ is a direct summand of $S_\q$.
This contradicts the assumption of the theorem, and thus $X_\q$ is not $R_\q$-free.
\renewcommand{\qedsymbol}{$\square$}
\end{proof}

\begin{claim}\label{50}
One has the inclusion $\ind\cmz(S)\subseteq\ind\cmp(R)$.
\end{claim}

\begin{proof}[Proof of Claim]
Take $M\in\ind\cmz(S)$.
Lemma \ref{31} implies that $M$ is indecomposable as an $R/\p$-module, and it is indecomposable as an $R$-module.
Taking $X:=M$ in Claim \ref{49}, we have $M\in\ind\cmp(R)$.
\renewcommand{\qedsymbol}{$\square$}
\end{proof}

It follows from Lemma \ref{c2} that $S$ has infinite $\cmz$-representation type.
Claim \ref{50} implies that $R$ has infinite $\cmp$-representation type, and the proof of the theorem is completed.
\end{proof}

We obtain an application of the above theorem, which gives an answer to Question \ref{2}.
For a ring $R$ we denote by $\overline R$ the integral closure of $R$.
Recall that a typical example of a henselian Nagata ring is a complete local ring.

\begin{cor}\label{55}
Let $R$ be a $2$-dimensional henselian Nagata Cohen--Macaulay non-normal local ring.
Suppose that $R$ has finite $\cmp$-representation type.
Then the following statements hold.
\begin{enumerate}[\rm(1)]
\item
There exists a minimal prime $\p$ of $R$ such that the integral closure $\overline{R/\p}$ has finite $\cm$-representation type.
In particular, if $R$ is a domain, then $\overline R$ has finite $\cm$-representation type.
\item
If $R$ is Gorenstein, then $R$ is a hypersurface.
\end{enumerate}
\end{cor}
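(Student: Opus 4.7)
The plan is to derive (1) by applying Theorem~\ref{52} to the composite $\phi\colon R\twoheadrightarrow R/\p\hookrightarrow S:=\overline{R/\p}$ for a suitably chosen minimal prime $\p$ of $R$, and then to deduce (2) by combining (1) with the classical theorem of Auslander~\cite{Aus} and Esnault~\cite{Es} on two-dimensional normal Cohen--Macaulay local rings of finite $\cm$-representation type. Finiteness and locality of $\phi$ come from $R$ being Nagata and henselian, respectively (a finite domain extension of a henselian local ring is local), and the birationality condition (a) of Theorem~\ref{52} is automatic. The substance lies in condition (b): producing $\q\in\v(\p)\setminus\{\m\}$ with $R_\q$ not an $R_\q$-module direct summand of $S_\q$.

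The basic tool is the observation that \emph{if $A$ is a Noetherian domain and $A\hookrightarrow B$ is a module-finite birational extension with $A$ a direct $A$-module summand of $B$, then $A=B$}: given a splitting $p\colon B\to A$ and $b=c/d\in B\subseteq\Q(A)$ with $c,d\in A$, $d\neq 0$, one has $dp(b)=p(db)=c=db$, whence $p(b)=b$. If $R$ is a domain, I take $\p=0$ and $S=\overline R$; since the $2$-dimensional CM ring $R$ satisfies Serre's condition $S_2$, its non-normality forces failure of $R_1$, giving a height-one prime $\q\neq\m$ with $\overline R_\q\supsetneq R_\q$, and the observation yields~(b). If $R$ is not a domain, $\phi$ kills $\p$, so $\p R_\q\subseteq\operatorname{ann}_{R_\q}(S_\q)$, and arranging $\p R_\q\neq 0$ produces~(b) because $R_\q$ has zero annihilator. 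To arrange this, when $R$ has multiple minimal primes I apply Hartshorne's connectedness theorem to the punctured spectrum of the $2$-dimensional CM local ring $R$ to obtain a non-maximal prime $\q$ containing two distinct minimal primes $\p_i,\p_j$; since $R$ is CM so $\operatorname{Ass}R=\operatorname{Min}R$, a standard primary-decomposition estimate gives $\ann_R(\p_i)\subseteq\bigcap_{k\neq i}\p_k\subseteq\p_j\subseteq\q$, forcing $\p_iR_\q\neq 0$ (and I take $\p=\p_i$). When $R$ has a unique minimal prime $\p$ (so $R$ is non-reduced), I take $\q=\p$, which is non-maximal because $\dim R/\p=2$; here $\ann_R(\p)\subseteq\p$ (otherwise a non-zero-divisor outside $\p$ would annihilate $\p$, forcing $\p=0$ since $\operatorname{Ass}R=\{\p\}$), which gives $\p R_\p\neq 0$. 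In every case, Theorem~\ref{52} applied contrapositively yields that $\overline{R/\p}$ has finite $\cm$-representation type, establishing (1); the in-particular for domains is the first case.

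For (2), suppose $R$ is Gorenstein, and let $\p$ be the minimal prime produced in (1). The Huneke--Leuschke theorem~\cite[Corollary~2]{HL02} ensures that $\overline{R/\p}$ has an isolated singularity, and Auslander--Esnault further identifies it as a quotient singularity under the standard characteristic hypotheses. The main obstacle, which I expect to be the hardest step, is transferring this structural information on $\overline{R/\p}$ back to $R$ itself. My plan is to exploit the Gorensteinness of $R$ (so that $R$ is its own canonical module and the conductor of $\overline{R/\p}$ in $R/\p$ carries reflexivity properties) together with the case analysis in (1) to bound $\codepth R\leq 1$, i.e., to conclude that $R$ is a hypersurface; for the non-domain case I expect to leverage Corollary~\ref{69} once the domain case has been settled.
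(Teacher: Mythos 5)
Your argument for part (1) is correct and, like the paper, routes everything through Theorem~\ref{52}; the differences lie in how you verify hypothesis (b). The paper first notes (via Corollary~\ref{16}(1)) that $R$ is reduced, decomposes $\overline R=\bigoplus_i\overline{R/\p_i}$, shows some component $T$ lies in $\cmp(R)$ by proving that freeness of $\overline R_\p$ over $R_\p$ forces $R_\p$ regular, and then rules out $R_\q$ being a summand of $T_\q$ by a rank-one argument over $(R/\p)_\q$. You instead split into cases: for domains you use the (correct) fact that a local domain which is a module summand of a finite birational extension equals that extension, and for non-domains you use Hartshorne connectedness plus the primary-decomposition estimate $\ann_R(\p_i)\subseteq\bigcap_{k\ne i}\p_k$ to force $0\ne\p R_\q\subseteq\ann_{R_\q}(S_\q)$. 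Both routes work; your non-reduced subcase is in fact vacuous (the paper shows $R$ is reduced), though the argument you give for it is still valid. One point to make explicit: ``direct summand'' in Theorem~\ref{52}(b) is the abstract notion, so before invoking your splitting observation you should convert an abstract surjection $f\colon S_\q\to R_\q$ into a splitting of the inclusion, e.g.\ via $g(x)=f(sx)$ with $f(s)=1$.

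Part (2), however, is not proved: what you have written is an announced strategy with the decisive step missing, and the strategy as described does not work. Knowing that $\overline{R/\p}$ is an isolated (or even quotient) singularity gives no direct handle on $\codepth R$, and your fallback of ``leveraging Corollary~\ref{69}'' in the non-domain case is circular, since Corollary~\ref{69} takes as a hypothesis that $R$ is a hypersurface --- precisely what (2) asserts. The paper's actual argument is of a different nature and uses no structure theory of $\overline{R/\p}$: setting $T=\overline{R/\p}$ and $U=\syz_T^2(T/\m T)$, one checks (using torsion-freeness over $R/\p$ and the already-established fact that $R_\q$ is not a summand of $T_\q$) that every indecomposable $R$-direct summand of $T$ and of $U$ lies in $\ind\cmp(R)$; Lemma~\ref{41} (finiteness of $\ind\cmp(R)$ plus Gorensteinness forces eventual periodicity of syzygies) then gives $\cx_RT\le1$ and $\cx_RU\le1$, hence $\cx_R(T/\m T)=\cx_Rk\le1$, and Avramov's theorem concludes that $R$ is a hypersurface. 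You would need to supply an argument of this kind, or some other genuine bound on $\cx_Rk$; as it stands, (2) is a gap.
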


\begin{proof}
By Corollary \ref{16}(1) the singular locus of $R$ has dimension at most one, so that $R$ satisfies Serre's condition $(\RR_0)$.
As $R$ is Cohen--Macaulay, it is reduced.
Let $S=\overline{R}$ be the integral closure of $R$.
We have a decomposition $S=\overline{R/\p_1}\oplus\cdots\oplus\overline{R/\p_n}$ as $R$-modules, where $\Min R=\{\p_1,\dots,\p_n\}$ (see \cite[Corollary 2.1.13]{HS}).
Since $R$ is Nagata, the extension $R\subseteq S$ is finite.
The ring $S$ is normal and has dimension two, so it is Cohen--Macaulay.

We claim that if $\p$ is a nonmaximal prime ideal of $R$ such that $S_\p$ is $R_\p$-free, then $R_\p$ is a regular local ring.
In fact, if $\height\p=0$, then $R_\p$ is a field.
Let $\height\p=1$.
The induced map $\spec S\to\spec R$ is surjective, and we find a prime ideal $P$ of $S$ such that $P\cap R=\p$.
We easily see $\height P=1$.
As $S$ is normal, $S_P$ is regular.
The induced map $R_\p\to S_P$ factors as $R_\p\xrightarrow{a}S_\p\xrightarrow{b}S_P$, where $a$ is a finite free extension, and $b$ is flat since $S_P=(S_\p)_{PS_\p}$.
Hence $R_\p\to S_P$ is a flat local homomorphism.
As $S_P$ is regular, so is $R_\p$.

Since $R$ does not have an isolated singularity, there exists a nonmaximal prime ideal $\p$ of $R$ such that $R_\p$ is not regular.
The claim implies that $S_\p$ is not $R_\p$-free, whence $S\in\cmp(R)$.
There exists an integer $1\le l\le n$ such that $T:=\overline{R/\p_l}$ belongs to $\cmp(R)$.

Put $\p:=\p_l\in\Min R$.
The ring $R/\p$ is also Nagata, and the extension $R/\p\subseteq T$ is finite and birational.
The ring $T$ is a $2$-dimensional henselian normal local domain, whence it is a Cohen--Macaulay.
Choose a nonmaximal prime ideal $\q$ of $R$ such that $T_\q$ is not $R_\q$-free.
If $\p$ is not contained in $\q$, then $(R/\p)_\q=\kappa(\p)_\q=0$ and $T_\q=0$, which particularly says that $T_\q$ is $R_\q$-free, a contradiction.
Hence $\p\subseteq\q$.

Suppose that $R_\q$ is a direct summand of $T_\q$.
Then there is an isomorphism $T_\q\cong R_\q\oplus X$ of $R_\q$-modules.
Since $T_\q$ is annihilated by $\p$, so is $R_\q$.
We have ring extensions $R_\q=(R/\p)_\q\subseteq T_\q\subseteq\kappa(\p)$, which especially says that $R_\q$ is a domain and that $T_\q$ has rank one as an $R_\q$-module.
Hence the $R_\q$-module $X$ has rank zero, and it is easy to see that $X=0$.
We get $T_\q\cong R_\q$, which contradicts the choice of $\q$.
Consequently, $T_\q$ does not have a direct summand isomorphic to $R_\q$.

Now, application of Theorem \ref{52} proves the assertion (1).
To show (2), we consider the $T$-module $U=\syz_T^2(T/\m T)$.
Fix any nonzero direct summand $X$ of $U$ or $T$ in $\mod R$.
Note that $T=\overline{R/\p}$ is a torsion-free module over $R/\p$.
Since $U$ is a submodule of a nonzero free $T$-module, $U$ is also torsion-free over $R/\p$, and so is $X$.
We easily see from this that $X_\q\ne0$.
The module $X_\q$ is a direct summand of $U_\q\cong T_\q^{\oplus\edim R-1}$.
As $R_\q$ is not a direct summand of $T_\q$, it is not a direct summand of $X_\q$.
In particular, $X$ belongs to $\cmp(R)$.
Thus, all the indecomposable direct summands of $U$ and of $T$ in $\mod R$ belong to $\ind\cmp(R)$, and it follows from Lemma \ref{41} that they have complexity at most one.
Hence $U$ and $T$ have complexity at most one over $R$, and so does $T/\m T$.
We obtain $\cx_Rk\le1$, and $R$ is a hypersurface by \cite[Theorem 8.1.2]{A}.
\end{proof}

The above result yields a strong restriction for finite $\cmp$-representation type in dimension two.

\begin{cor}\label{56}
Let $R$ be a $2$-dimensional non-normal Gorenstein complete local ring.
If $R$ has finite $\cmp$-representation type, then the integral closure $\overline R$ has finite $\cm$-representation type.
\end{cor}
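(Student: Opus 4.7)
\textbf{Proof plan for Corollary \ref{56}.}
The plan is to combine Corollaries \ref{55} and \ref{69}. Since $R$ is complete local, it is henselian and Nagata, so the hypotheses of Corollary \ref{55} are satisfied. Part (2) of that corollary gives that $R$ is a hypersurface. I split into two cases according to whether $R$ is a domain. If $R$ is a domain, then the ``in particular'' statement of Corollary \ref{55}(1) immediately yields that $\overline R$ has finite $\cm$-representation type, and we are done.

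Suppose instead that $R$ is not a domain. Then $R$ is a 2-dimensional complete local hypersurface which is not a domain, so Corollary \ref{69} produces a 3-dimensional complete regular local ring $S$ and elements $x,y\in S$ with $R\cong S/(xy)$ such that both $S/(x)$ and $S/(y)$ have finite $\cm$-representation type and $S/(x,y)$ is a 1-dimensional domain. The key observation is that $S/(x)$ and $S/(y)$ are themselves forced to be normal domains: they are 2-dimensional Cohen--Macaulay complete local rings with finite $\cm$-representation type, so by \cite[Corollary 2]{HL02} they are isolated singularities, hence satisfy Serre's condition $(\RR_1)$, and being Cohen--Macaulay of dimension two they are then normal. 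In particular $(x)$ and $(y)$ are prime ideals of $S$, and since $S/(x,y)$ is 1-dimensional neither contains the other, so the minimal primes of $R$ are exactly $(x)/(xy)$ and $(y)/(xy)$. Using that $x,y$ is a regular sequence on the Cohen--Macaulay ring $S$, one has $(x)\cap(y)=(xy)$, so the natural ring map $R\to S/(x)\times S/(y)$ is injective, and the standard decomposition of the integral closure of a reduced Noetherian ring along its minimal primes gives
\[
\overline{R}\;=\;\overline{S/(x)}\times\overline{S/(y)}\;=\;S/(x)\times S/(y),
\]
where the second equality uses the normality established above. Finally, a finite product of rings of finite $\cm$-representation type has finite $\cm$-representation type itself (every MCM module over $A\times B$ decomposes via the central idempotents, so every indecomposable MCM $(A\times B)$-module is an indecomposable MCM module over one of the factors), and this gives the conclusion.

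The main obstacle is the non-domain case, specifically identifying $\overline R$ cleanly as $S/(x)\times S/(y)$. The work is short-circuited by the fact, obtained as above from Huneke--Leuschke plus Serre's criterion, that the two ``branch'' rings $S/(x)$ and $S/(y)$ produced by Corollary \ref{69} are automatically normal, so no delicate integral-closure computation is needed beyond the minimal-prime decomposition of $\overline R$.
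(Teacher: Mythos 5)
Your proposal is correct and follows essentially the same route as the paper's proof: reduce the domain case to Corollary \ref{55}(1), and in the non-domain case combine Corollary \ref{55}(2) with Corollary \ref{69}, observe that $S/(x)$ and $S/(y)$ are normal (via Huneke--Leuschke plus Serre's criterion), identify $\overline R$ with $S/(x)\times S/(y)$, and conclude from the product decomposition of $\cm(\overline R)$. The only difference is that you spell out the normality and minimal-prime bookkeeping that the paper compresses into a citation of \cite[Corollary 2.1.13]{HS}.
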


\begin{proof}
If $R$ is a domain, then the assertion follows from Corollary \ref{55}(1).
Hence let us assume that $R$ is not a domain.
By Corollary \ref{55}(2) the ring $R$ is a hypersurface.
We can apply Corollary \ref{69} to see that there exists a $3$-dimensional regular local ring $S$ and elements $x,y\in S$ such that $R$ is isomorphic to $S/(xy)$ and $S/(x),S/(y)$ have finite $\cm$-representation type.
Note by \cite[Corollary 2]{HL02} that $S/(x),S/(y)$ are normal.
As in the beginning of the proof of Corollary \ref{55}, the ring $R$ is reduced.
Hence $(x)\ne(y)$, and we have an isomorphism $\overline R\cong\overline{S/(x)}\times\overline{S/(y)}=S/(x)\times S/(y)$; see \cite[Corollary 2.1.13]{HS}.
There is a natural category equivalence $\mod\overline R\cong\mod S/(x)\times\mod S/(y)$, which induces a category equivalence $\cm(\overline R)\cong\cm(S/(x))\times\cm(S/(y))$.
It is observed from this that $\overline R$ has finite $\cm$-representation type.
\end{proof}

The converse of Corollary \ref{56} does not necessarily hold, as the following example says.

\begin{ex}
Let $R=k[\![x,y,z]\!]/(x^4-y^3z)$ be a quotient of the formal power series ring $k[\![x,y,z]\!]$ over a field $k$.
Then $R$ is a $2$-dimensional complete non-normal local hypersurface.
The assignment $x\mapsto s^3t,\,y\mapsto s^4,\,z\mapsto t^4$ gives an isomorphism from $R$ to the subring $S=k[\![s^4,s^3t,t^4]\!]$ of the formal power series ring $T=k[\![s,t]\!]$.
The integral closure of $S$ is the fourth Veronese subring $k[\![s^4,s^3t,s^2t^2,st^3,t^4]\!]$ of $T$, which has finite $\cm$-representation type by \cite[Theorem 6.3]{LW}.
Hence $\overline R$ has finite $\cm$-representation type.
However, as $x^4-y^3z=x^4+x^2y\cdot0+y^2(-yz)$, the ring $R$ does not have finite $\cmp$-representation type by Example \ref{57}.
\end{ex}

\begin{rem}
The integral closure has to actually be regular (under the assumptions of Corollary \ref{56}) provided that our conjecture that countable $\cm$-representation type is equivalent to finite $\cmp$-representation type holds true in this setting.
\end{rem}

\section*{Acknowledgments}
The authors are deeply indebted to Hailong Dao for asking them whether there exists a Cohen--Macaulay local ring of finite $\cmp$-representation type other than the hypersurfaces of type $(\A_\infty)$ and $(\D_\infty)$.
In fact, this question gave the authors a strong motivation for this work. We are also grateful to two anonymous referees whose suggestions greatly improved the paper, and we also thank Tokuji Araya for stimulating discussions.
Most of this work was done during the visit of Toshinori Kobayashi to the University of Kansas in 2018--2019.
He is grateful to the Department of Mathematics for their hospitality.


\end{document}